\numberwithin{equation}{section}
\newtheorem{theorem}{Theorem}[section]
\newtheorem{proposition}[theorem]{Proposition}
\newtheorem{lemma}[theorem]{Lemma}
\newtheorem{corollary}[theorem]{Corollary}
\theoremstyle{definition}
\theoremstyle{remark}
\newtheorem{remark}[theorem]{Remark}
\newcommand{\Z}{\mathbb{Z}}
\newcommand{\SU}{\mathrm{SU}}
\newcommand{\Spin}{\mathrm{Spin}}
\newcommand{\G}{\mathrm{G}}
\newcommand{\F}{\mathrm{F}}
\newcommand{\E}{\mathrm{E}}
\renewcommand{\P}{\mathcal{P}}
\newcommand{\hofib}{\mathrm{hofib}}
\title{Samelson products in quasi-$p$-regular exceptional Lie groups}
\author{Sho Hasui}
\address{Faculty of Liberal Arts and Sciences, Osaka Prefecture University, Osaka, 599-8531, Japan}
\email{s.hasui@las.osakafu-u.ac.jp}
\author{Daisuke Kishimoto}
\address{Department of Mathematics, Kyoto University, Kyoto, 606-8502, Japan}
\email{kishi@math.kyoto-u.ac.jp}
\author{Toshiyuki Miyauchi}
\address{Department of Applied Mathematics, Faculty of Science, Fukuoka University, Fukuoka, 814-0180, Japan}
\email{miyauchi@math.sci.fukuoka-u.ac.jp}
\author{Akihiro Ohsita}
\address{Faculty of Economics, Osaka University of Economics, Osaka 533-8533, Japan}
\email{ohsita@osaka-ue.ac.jp}
\subjclass[2010]{Primary 55P35; Secondary 57T10}
\begin{document}

\baselineskip 16pt

\maketitle

\begin{abstract}
There is a product decomposition of a compact connected Lie group $G$ at the prime $p$, called the mod $p$ decomposition, when $G$ has no $p$-torsion in homology. Then in studying the multiplicative structure of the $p$-localization of $G$, the Samelson products of the factor space inclusions of the mod $p$ decomposition are fundamental. This paper determines (non-)triviality of these fundamental Samelson products in the $p$-localized exceptional Lie groups when the factor spaces are of rank $\le 2$, that is, $G$ is quasi-$p$-regular.
\end{abstract}


\section{Introduction}

Let $G$ be a compact connected Lie group. Recall from \cite{MNT} that if $G$ has no $p$-torsion in the integral homology, then there is a $p$-local homotopy equivalence
$$G\simeq_{(p)}B_1\times\cdots\times B_{p-1}$$
such that $B_i$ is resolvable by spheres of dimension $2i-1\mod2(p-1)$, where each $B_i$ is indecomposable if $G$ is simple except for type D. This is called the mod $p$ decomposition of $G$. For maps $\alpha\colon A\to X,\beta\colon B\to X$ into a homotopy associative H-space with inverse $X$, the composite
$$A\wedge B\xrightarrow{\alpha\wedge\beta}X\wedge X\to X$$
is called the Samelson product of $\alpha,\beta$ and is denoted by $\langle\alpha,\beta\rangle$, where the last arrow is the reduced commutator map. Then in studying the standard multiplication of the $p$-localization $G_{(p)}$, the Samelson products of the inclusions $B_i\to G_{(p)}$ are fundamental, and there are applications of these Samelson products as in \cite{M,KK,Ki}. In this paper, we aim to determine  (non-)triviality of these fundamental Samelson products in $G_{(p)}$ when $G$ is the quasi-$p$-regular exceptional Lie group, which is a continuation of the previous work \cite{HKO} on $p$-regular exceptional Lie groups.

Let us recall the result of the previous work \cite{HKO}. We say that $G$ is $p$-regular if $G$ is $p$-locally homotopy equivalent to the product of spheres. By the classical result of Hopf, $G$ is rationally homotopy equivalent to the product of spheres of dimension $2n_1-1,\ldots,2n_\ell-1$ for $n_1\le\cdots\le n_\ell$. The sequence $n_1,\ldots,n_\ell$ is called the type of $G$ and is denoted by $t(G)$. There is a list of types of simple Lie groups in \cite{KK}. It is known that when $G$ is simply connected, $G$ is $p$-regular if and only if $p$ is no less than the maximum of $t(G)$ (cf. \cite{MNT}). Obviously, if $G$ is $p$-regular, $G$ is $p$-locally homotopy equivalent to the product of spheres of dimension $2i-1$ for $i\in t(G)$. Let $\epsilon_i\colon S^{2i-1}\to G_{(p)}$ denote the inclusion for $i\in t(G)$ when $G$ is $p$-regular.

\begin{theorem}
[Hasui, Kishimoto, and Ohsita {\cite{HKO}}]
\label{HKO}
Let $G$ be the $p$-regular exceptional Lie group . The Samelson product $\langle\epsilon_i,\epsilon_j\rangle$ in $G_{(p)}$ is non-trivial if and only if there is $k\in t(G)$ such that $i+j=k+p-1$.
\end{theorem}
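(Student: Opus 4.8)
The plan is to identify the class $\langle\epsilon_i,\epsilon_j\rangle\in\pi_{2i+2j-2}(G_{(p)})$ and to detect its nontriviality by the Steenrod operation $\P^1$. Since $G$ is $p$-regular, $G_{(p)}\simeq\prod_{k\in t(G)}S^{2k-1}$, so that
$$\pi_{2i+2j-2}(G_{(p)})\cong\bigoplus_{k\in t(G)}\pi_{2i+2j-2}(S^{2k-1})_{(p)}.$$
I would first run a range argument on Toda's $p$-primary homotopy groups of spheres. The $k$-th summand sits in stem $2(i+j-k)-1$; the first nonzero $p$-torsion of a sphere is $\alpha_1$ in stem $2p-3$, and the next is $\alpha_2$ in stem $4p-5$. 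For every exceptional $G$ the top degree $\max t(G)$ is even, so $p>\max t(G)$ and hence $i,j<p$ and $k\ge2$, which forces $0\le 2(i+j-k)-1<4p-5$. Therefore each summand vanishes unless $2(i+j-k)-1=2p-3$, i.e. $k=i+j-p+1$, and in that case it equals $\Z/p\langle\alpha_1\rangle$. This proves the ``only if'' part, since the target group is zero when no $k\in t(G)$ satisfies $i+j=k+p-1$, and it reduces the ``if'' part to showing that for $k:=i+j-p+1\in t(G)$ the image $p_k\circ\langle\epsilon_i,\epsilon_j\rangle$ under the projection $p_k\colon G_{(p)}\to S^{2k-1}$ generates $\Z/p\langle\alpha_1\rangle$.

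For the detection I would use the functional operation associated to $f:=\langle\epsilon_i,\epsilon_j\rangle$. Because the splitting $G_{(p)}\simeq\prod_kS^{2k-1}$ is natural for $\P^1$ and $\P^1$ vanishes on each odd sphere, $\P^1$ acts trivially on $H^*(G;\Z/p)$, so the functional operation $\P^1_f$ is defined; a degree count places $\P^1_f(x_{2k-1})$ in $H^{2(i+j)-2}(S^{2i+2j-2};\Z/p)\cong\Z/p$, where $x_{2k-1}$ denotes the generator dual to $\epsilon_k$. By naturality of functional operations, $\P^1_f(x_{2k-1})=\P^1_{p_k\circ f}(\iota_{2k-1})$, and since $\alpha_1$ is precisely the element detected by $\P^1$ in its mapping cone, $\P^1_{\alpha_1}(\iota_{2k-1})$ is a generator. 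Hence $\langle\epsilon_i,\epsilon_j\rangle$ is nontrivial if and only if $\P^1_f(x_{2k-1})\ne0$; this is the detection lemma, and it turns the problem into a cohomological computation.

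It then remains to evaluate $\P^1_f(x_{2k-1})$ through the multiplicative structure. Passing to the adjoint Whitehead product $[\tilde\epsilon_i,\tilde\epsilon_j]$ in $BG$, where $H^*(BG;\Z/p)=\Z/p[y_{2n}\mid n\in t(G)]$, one identifies $\P^1_f(x_{2k-1})$ (up to a unit) with the coefficient of $y_{2i}y_{2j}$ in $\P^1 y_{2k}\in H^{2(i+j)}(BG;\Z/p)$: the obstruction to extending $\tilde\epsilon_i\vee\tilde\epsilon_j$ over $S^{2i}\times S^{2j}$ is read off by restricting $\P^1 y_{2k}$, and here $p$-regularity enters a second time, for $i+j=k+p-1>\max t(G)$ means there is no indecomposable generator in degree $2(i+j)$ to absorb this coefficient, so only the $y_{2i}y_{2j}$-term survives the restriction. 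I would conclude by invoking the known action of $\P^1$ on $H^*(BG;\Z/p)$ for the exceptional groups to check that this coefficient is nonzero exactly when $i,j,k\in t(G)$ with $i+j=k+p-1$. The main obstacle is the detection lemma together with this identification: one must prove rigorously that a single functional operation simultaneously captures the entire Samelson product and equals the stated Steenrod coefficient, and then carry out the $\P^1$-computation on each classifying space; the $p$-regularity hypothesis is exactly what keeps both steps clean, by trivializing $\P^1$ on $H^*(G;\Z/p)$ and by placing $i+j$ above $\max t(G)$.
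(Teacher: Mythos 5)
Your plan follows essentially the same two-part architecture as the cited proof: triviality is obtained by splitting $\pi_{2i+2j-2}(G_{(p)})$ along the $p$-regular decomposition and quoting Toda's groups (this is Corollary \ref{Samelson-1} in the present paper with $r_i=r_j=r_k=1$), and non-triviality is detected by $\P^1$ on $H^*(BG;\Z/p)$ through the obstruction to extending $\epsilon_i'\vee\epsilon_j'$ over $S^{2i}\times S^{2j}$, which is exactly Proposition \ref{P^1} in the case $r_k=1$ (cf.\ \cite{KO}). Your repackaging via the functional operation $\P^1_f$ is sound: the indeterminacy vanishes since $H^{2k-1}(S^{2i+2j-2};\Z/p)=0$ and $f^*$ kills the entirely decomposable even part of $H^*(G;\Z/p)$, and the two-way detection you claim, while more than is needed (the triviality direction already follows from the vanishing of the ambient group), is correct because $\pi_{2i+2j-2}(G_{(p)})\cong\Z/p$ is generated by $\epsilon_k\circ\alpha_1$ and $\alpha_1$ is faithfully detected by $\P^1$ in its cone; the identification of $\P^1_f(x_{2k-1})$ with the coefficient of $y_{2i}y_{2j}$ in $\P^1y_{2k}$ is the lemma of \cite{KO} invoked before Proposition \ref{P^1}. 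One small slip: it is not true that the only $p$-torsion of $S^{2k-1}$ below stem $4p-5$ is $\alpha_1$; Proposition \ref{pi(S)} has classes in the even stems $2m(p-1)-2$ for $m\ge k$, the first already at $4p-6$ when $k=2$. Your conclusion survives only because the target stem $2(i+j-k)-1$ is odd, and this parity point should be said explicitly.

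The genuine gap is your final step, where you ``invoke the known action of $\P^1$ on $H^*(BG;\Z/p)$.'' That action is not off-the-shelf, and appealing to it is circular: its systematic determination is the computational heart of \cite{HKO}, the very result being proved (and is redone at quasi-regular primes in Propositions \ref{x-E_8}--\ref{x-E_6} and Lemmas \ref{P^1-E_6}, \ref{P^1-E_8}, \ref{P^1-E_7} here). Concretely, one must (i) choose generators $x_{2n}$ of $H^*(BG)$ whose restrictions along the inclusions $\Spin(9)\to\F_4$, $\Spin(10)\to\E_6$, $\Spin(11)\to\E_7$, $\Spin(15)\to\E_8$ are explicitly known in Pontrjagin and Euler classes, with normalizations as in Proposition \ref{x-E_8} --- without this nothing can be computed, and since these Spin subgroups have smaller rank than $\E_\ell$, recovering the coefficients from the restriction involves nontrivial linear algebra; (ii) compute $\P^1$ on Pontrjagin classes by Shay's mod $p$ Wu formula \cite{S} and push it through these inclusions; and (iii) verify that the resulting coefficient of $x_{2i}x_{2j}$ in $\P^1x_{2k}$ is a unit mod $p$ for \emph{every} triple $i,j,k\in t(G)$ with $i+j=k+p-1$ and for \emph{every} $p$-regular prime simultaneously --- an infinite family of congruences, since the coefficients vary with $p$ as one sees in Lemma \ref{P^1-E_6}, not a finite check. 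Until (i)--(iii) are carried out, the ``if'' direction of the theorem is unproven; everything else in your outline is in order.
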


Let $B(2i-1,2i+2p-3)$ be the $S^{2i-1}$-bundle over $S^{2i+2p-3}$ classified by $\frac{1}{2}\alpha_1\in\pi_{2i+2p-4}(S^{2i-1})$ as in \cite{MNT,MT}, where $\alpha_1$ is a generator of the $p$-component of $\pi_{2i+2p-4}(S^{2i-1})$ which is isomorphic with $\Z/p$. Recall that $G$ is quasi-$p$-regular if $G$ is $p$-locally homotopy equivalent to the product of $B(2i-1,2i+2p-3)$'s and spheres. When $G$ is exceptional, it is shown in \cite{MT} that $G$ is quasi-$p$-regular if and only if $p\ge 5$ for $G=\G_2,\F_4,\E_6$ and $p\ge 11$ for $G=\E_7,\E_8$. In these cases, the specific mod $p$ decomposition is:

\renewcommand{\arraystretch}{1.2}
\begin{table}[H]
\label{decomp}
\centering
\begin{tabular}{lll}
$\G_2$&$p=5$&$B(3,11)$\\
&$p>5$&$S^3\times S^{11}$\\
$\F_4$&$p=5$&$B(3,11)\times B(15,23)$\\
&$p=7$&$B(3,15)\times B(11,23)$\\
&$p=11$&$B(3,23)\times S^{11}\times S^{15}$\\
&$p>11$&$S^3\times S^{11}\times S^{15}\times S^{23}$\\
$\E_6$&$p=5$&$\F_4\times B(9,17)$\\
&$p>5$&$\F_4\times S^9\times S^{17}$\\
$\E_7$&$p=11$&$B(3,23)\times B(15,35)\times S^{11}\times S^{19}\times S^{27}$\\
&$p=13$&$B(3,27)\times B(11,35)\times S^{15}\times S^{19}\times S^{23}$\\
&$p=17$&$B(3,35)\times S^{11}\times S^{15}\times S^{19}\times S^{23}\times S^{27}$\\
&$p>17$&$S^3\times S^{11}\times S^{15}\times S^{19}\times S^{23}\times S^{27}\times S^{35}$\\
$\E_8$&$p=11$&$B(3,23)\times B(15,35)\times B(27,47)\times B(39,59)$\\
&$p=13$&$B(3,27)\times B(15,39)\times B(23,47)\times B(35,59)$\\
&$p=17$&$B(3,35)\times B(15,47)\times B(27,59)\times S^{23}\times S^{39}$\\
&$p=19$&$B(3,39)\times B(23,59)\times S^{15}\times S^{27}\times S^{35}\times S^{47}$\\
&$p=23$&$B(3,47)\times B(15,59)\times S^{23}\times S^{27}\times S^{35}\times S^{39}$\\
&$p=29$&$B(3,59)\times S^{15}\times S^{23}\times S^{27}\times S^{35}\times S^{39}\times S^{47}$\\
&$p>29$&$S^3\times S^{15}\times S^{23}\times S^{27}\times S^{35}\times S^{39}\times S^{47}\times S^{59}$
\end{tabular}
\end{table}

Let $t_p(G)$ be the subset of $t(G)$ consisting of $i\in t(G)$ such that $2i-1$ is the dimension of the bottom cell of some $B_j$ in the mod $p$ decomposition of $G_{(p)}$, where $t_p(G)$ is possibly not a subset of $\{1,\ldots,p-1\}$. Since there is a one-to-one correspondence between $B_i$'s and $t_p(G)$, we ambiguously denote the factor space of $G_{(p)}$ corresponding to $i\in t_p(G)$ by $B_i$. In our case, the set $t_p(G)$ can easily be deduced from the above table as:

\renewcommand{\arraystretch}{1.2}
\begin{table}[H]
\label{t_p(G)}
\centering
\begin{tabular}{llllll}
$t_p(\G_2)$&$p=5$&$2$&$t_p(\E_7)$&$p=11$&$2,6,8,10,14$\\
$t_p(\F_4)$&$p=5$&$2,8$&&$p=13$&$2,6,8,10,12$\\
&$p=7$&$2,6$&&$p=17$&$2,6,8,10,12,14$\\
&$p=11$&$2,6,8$&$t_p(\E_8)$&$p=11$&$2,8,14,20$\\
$t_p(\E_6)$&$p=5$&$2,5,8$&&$p=13$&$2,8,12,18$\\
&$p=7$&$2,5,6,9$&&$p=17$&$2,8,12,14,20$\\
&$p=11$&$2,5,6,8,9\quad$&&$p=19$&$2,8,12,14,18,24$\\
&&&&$p=23$&$2,8,12,14,18,20$\\
&&&&$p=29$&$2,8,12,14,18,20,24$\\
\end{tabular}
\end{table}

We now state our main result, where we owe the $p$-regular case to Theorem \ref{HKO}. Let $\epsilon_i\colon B_i\to G_{(p)}$ denote the inclusion for $i\in t_p(G)$, and put $r_i=\mathrm{rank}\,B_i$.

\begin{theorem}
\label{main}
Let $G$ be the quasi-$p$-regular exceptional Lie group. Then for $i,j\in t_p(G)$, the Samelson product $\langle\epsilon_i,\epsilon_j\rangle$ in $G_{(p)}$ is trivial if and only if one of the following conditions holds:
\begin{enumerate}
\item there is no $k\in t_p(G)$ such that $i+j\equiv k\mod(p-1)$ and $i+j+(r_i+r_j-1)(p-1)> k+r_k(p-1)$;
\item $r_i+r_j\ge 3$ and there is $k\in t_p(G)$ such that $k=i+j+(r_i+r_j-3)(p-1)$;
\item $i+j=p+1$ and $r_i+r_j=3$;
\item $(G,p,\{i,j\})=(\E_6,7,\{2,6\}),(\E_7,11,\{2,8\}),(\E_7,11,\{8,10\}),(\E_8, 19, \{2,12\}),(\E_8, 19, \{12,12\})$.
\end{enumerate}
\end{theorem}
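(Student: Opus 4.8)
The plan is to reduce the computation to the factors of the mod $p$ decomposition and then to the $p$-local homotopy of spheres. Since $G_{(p)}\simeq\prod_{k\in t_p(G)}B_k$ as spaces, a map into $G_{(p)}$ is null-homotopic precisely when each of its projections is, so $\langle\epsilon_i,\epsilon_j\rangle$ is trivial if and only if $p_k\circ\langle\epsilon_i,\epsilon_j\rangle\colon B_i\wedge B_j\to B_k$ is trivial for every $k$, where $p_k$ is the projection onto $B_k$. I assign to the cell of $B_i$ of dimension $2(i+s(p-1))-1$ the \emph{type value} $i+s(p-1)$, for $0\le s\le r_i-1$, so that $B_i\wedge B_j$ has cells of type value $i+j+s(p-1)$, $0\le s\le r_i+r_j-2$, and $B_k$ has cells of type value $k+s'(p-1)$, $0\le s'\le r_k-1$. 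Using bilinearity of the Samelson product and the cofiber sequences $S^{2i-1}\to B_i\to S^{2i+2p-3}$ of the rank-two factors, I would express $p_k\circ\langle\epsilon_i,\epsilon_j\rangle$ as an assembly of the Samelson products of the individual cell inclusions. In the metastable range the relevant part of $\pi_*(S^n)_{(p)}$ is the $\alpha$-family, and a cell-pair of type values $a,b$ contributes a single class of $\alpha_1$-type into the cell of type value $a+b-(p-1)$, exactly by the analysis proving Theorem \ref{HKO}; since a map $S^{n+2p-3}\to S^n$ is $\alpha_1$ if and only if $\mathcal{P}^1$ is nonzero on its cofiber, each such contribution is nonzero precisely when $\mathcal{P}^1$ links the dual generators in the known algebra $H^*(G;\Z/p)$.

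With this bookkeeping condition (1) becomes the pure dimension obstruction. Subtracting $(p-1)$ from its inequality turns it into $i+j+(r_i+r_j-2)(p-1)>k+(r_k-1)(p-1)$, i.e.\ the top cell of $B_i\wedge B_j$ lies strictly above the top cell of $B_k$; thus (1) asserts that for no $k$ in the congruence class $i+j\bmod(p-1)$ can a low source cell reach a high cell of $B_k$ by an $\alpha_1$. The robust contribution is that of the bottom cells, of type values $i,j$, into the top cell of $B_k$, which occurs exactly when (1) fails, and I would show it is detected by $\mathcal{P}^1$ and hence survives. This, together with the vanishing arguments below, is meant to give the non-triviality half of the theorem: if none of (1)--(4) holds, a $\mathcal{P}^1$-detected contribution remains.

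Conditions (2) and (3) are the degeneracies forced by the bundle attaching map $\tfrac12\alpha_1$. In (2), where $r_i+r_j\ge3$ and $k=i+j+(r_i+r_j-3)(p-1)$ lies in $t_p(G)$, the contribution into the bottom cell of $B_k$ issues from the top cell of $B_i\wedge B_j$, a smash of top cells attached by $\tfrac12\alpha_1$; its Samelson product accordingly factors as $\alpha_1$ followed by a lower Samelson product that is again of $\alpha_1$-type, so it vanishes because $\alpha_1^2=0$ for $p\ge5$. In (3), the borderline $i+j=p+1$ with $r_i+r_j=3$, the bottom-cell contribution would be an $\alpha_1\in\pi_{2p}(S^3)$ into the $S^3$-cell of the rank-two factor $B_2=B(3,2p+1)$; but up to a unit this $\alpha_1$ is the attaching map of $B_2$ and so dies under $S^3\to B_2$, and I would verify through the cofiber sequence of $B_2$, together with bilinearity, that the remaining contribution assembles to zero as well. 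Naturality for the cofiber sequences then propagates these vanishings across the cells.

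The main obstacle is the sporadic list (4). These are exactly the pairs for which (1) fails yet the projection still vanishes, with no uniform dimension or attaching-map reason available: here several cells of $B_i\wedge B_j$ --- typically the two smashes of a bottom cell with a top cell --- carry $\alpha_1$-contributions into one and the same cell of some $B_k$, and their sum cancels. When $i=j$ this cancellation is forced by the graded antisymmetry $\langle\alpha,\beta\rangle=-(-1)^{|\alpha||\beta|}\langle\beta,\alpha\rangle$, while for $i\ne j$ it is a numerical coincidence among the $\mathcal{P}^1$-structure constants. I expect these finitely many cases to consume most of the effort, since each must be settled by explicit computation in $H^*(G;\Z/p)$ --- invoking a secondary operation where $\mathcal{P}^1$ is inconclusive --- rather than by any structural principle. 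Assembling the vanishing furnished by (1)--(4) and matching it against the $\mathcal{P}^1$-detected survival of the second paragraph then yields the stated equivalence.
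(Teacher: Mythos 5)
Your first half is broadly sound in spirit, but even there you need a correction: $B(2i-1,2i+2p-3)$ is not a two-cell complex --- it has a top cell in dimension $4i+2p-4$ --- so your ``type value'' bookkeeping for $B_i\wedge B_j$ omits cells. The paper avoids this by replacing $B_i$ with the homology-generating subcomplexes $A_i$ ($A_i\simeq S^{2i-1}\cup_{\alpha_1}e^{2i+2p-3}$ when $r_i=2$) and invoking the non-formal result of Kishimoto that $\langle\epsilon_i,\epsilon_j\rangle$ is trivial if and only if $\langle\bar{\epsilon}_i,\bar{\epsilon}_j\rangle$ is; you would need this (or a substitute) before any cell-by-cell analysis is legitimate. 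Granting that, your arguments for conditions (1) and (2) track the paper's Corollary \ref{Samelson-1} and Proposition \ref{Samelson-2} (vanishing of $[A_i\wedge A_j,B_k]$ via the $\alpha_1$-cofibration exact sequence). The serious gaps are in everything after that. First, your non-triviality claim --- that whenever (1)--(4) all fail, a contribution is detected by $\mathcal{P}^1$ acting on $H^*(G;\Z/p)$ --- is false as stated and also insufficient: the correct criterion (Proposition \ref{P^1}) uses the \emph{quadratic} parts of $\mathcal{P}^{r_k}x_{2k}$ in $H^*(BG;\Z/p)$ (linear parts, which is what the module structure of $H^*(G;\Z/p)$ sees, only record attaching maps), and even this criterion fails for a sizeable list of non-trivial pairs, e.g.\ $(\E_6,5,\{2,8\})$, $(\E_6,5,\{5,5\})$, $(\E_7,13,\{6,6\})$, $(\E_8,13,\{8,18\})$ --- all of Table \ref{non-trivial2} --- which the paper can only settle by a completely different mechanism.

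That mechanism, absent from your proposal, is the heart of the proof: lift $\langle\bar{\epsilon}_i,\bar{\epsilon}_j\rangle$ through the homotopy fiber of a stabilized irreducible representation $\rho_\ell\colon G\to\SU(\infty)$ (possible since $\SU(\infty)$ is homotopy commutative), decompose $\rho_\ell$ compatibly with the mod $p$ decompositions, and decide triviality by comparing $\Phi_{k(i,j)}(\gamma)$ with $\mathrm{Im}\,\Phi_{k(i,j)}\circ\delta$, computed from the Chern classes of the $27$-, $56$- and $248$-dimensional representations. Your treatment of condition (3) only kills the bottom-cell contribution; the top-cell component lives in $\pi_{4p-2}(B(3,2p+1))\cong\Z/p$ and your ``verify it assembles to zero'' has no mechanism --- the paper proves it by showing $\Phi_{k(i,j)}\circ\delta$ is \emph{surjective} in exactly these cases (Lemma \ref{Phi-delta-K}(1), Corollary \ref{Samelson-4}). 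For condition (4) your two proposed tools both fail: graded antisymmetry cannot force vanishing when $i=j$ (all the relevant classes have odd-dimensional sources, and if it did, it would equally kill $\langle\bar{\epsilon}_{18},\bar{\epsilon}_{18}\rangle$ in $\E_8$ at $p=19$ and $\langle\bar{\epsilon}_8,\bar{\epsilon}_8\rangle$ in $\E_7$ at $p=11$, which are non-trivial); and secondary operations are not used and could at best detect non-triviality, whereas (4) asserts \emph{triviality}, proved by exhibiting the lift $\gamma$ in $\mathrm{Im}\,\delta$ (Proposition \ref{Samelson-5}), with $(\E_8,19,\{12,12\})$ requiring a computation modulo $19^2$ together with a connective-cover analysis of $\hofib(\rho_8^{24})$ (Propositions \ref{[A,hofib]} and \ref{Samelson-6}). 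Without some replacement for this representation-theoretic fiber argument, both the triviality statements (3)--(4) and the non-triviality of the Table \ref{non-trivial2} cases remain unproved.
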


\begin{remark}
This theorem includes the result of McGibbon \cite{M} that $\G_2$ at the prime 5 is homotopy commutative. 
\end{remark}

The proof of Theorem \ref{main} consists of three parts. The first part shows triviality of the Samelson products by looking at the homotopy groups of $G$. The second part applies a criterion for non-triviality of the Samelson products by the Steenrod operations on the mod $p$ cohomology of the classifying space of $G$ which is a generalization of the criterion used to prove Theorem \ref{HKO} in \cite{HKO}. The third part determines (non-)triviality of the remaining Samelson products by considering a homotopy fibration $\hofib(\rho)\to G\xrightarrow{\rho}\SU(\infty)$ for  a stabilized representation $\rho$, where the easiest case that $\rho$ is the inclusion of $\SU(n)$ is studied in \cite{HK}. Since $\SU(\infty)$ is homotopy commutative, Samelson products lift to $\hofib(\rho)$. Then the important point is to identify the homotopy fiber $\hofib(\rho)$, and to this end, we decompose $\rho$ with respect to the mod $p$ decompositions of $G$ and $\SU(\infty)$, which is not needed in \cite{HK}. We then describe lifts of the Samelson products through the identification of $\hofib(\rho)$ and to determine (non-)triviality of the Samelson products.


\section{Triviality of Samelson products}

Hereafter we localize everything at the prime $p$.  Suppose that $(G,p)$ is as in:

\renewcommand{\arraystretch}{1.2}
\begin{table}[H]
\caption{}
\label{(G,p)}
\centering
\begin{tabular}{llll}
$\mathrm{SU}(n)$&$n\le(p-1)(p-2)+1$\\
$\mathrm{Sp}(n),\mathrm{Spin}(2n+1)$&$2n\le(p-1)(p-2)$\\
$\mathrm{Spin(2n)}$&$2(n-1)\le(p-1)(p-2)$\\
$\mathrm{G}_2,\F_4,\E_6$&$p\ge 5$\\
$\E_7,\E_8$&$p\ge 11$\\
\end{tabular}
\end{table}

It is shown in \cite{Th2} that there is a subcomplex $A_i$ of $B_i$ such that the inclusion $A_i\to B_i$ induces an isomorphism
$$\Lambda(\widetilde{H}_*(A_i))\cong H_*(B_i)$$
where $G\simeq B_1\times\cdots\times B_{p-1}$ is the mod $p$ decomposition of $G$. Put $A=A_1\vee\cdots\vee A_{p-1}$. 

\begin{theorem}
[Kishimoto {\cite{Ki}} and Theriault {\cite{Th2}}]
\label{A}
Suppose that $(G,p)$ is in Table \ref{(G,p)}. The subcomplex $A$ has the following properties:
\begin{enumerate}
\item the inclusion $\Sigma A\to\Sigma G$ has a left homotopy inverse, say $t$;
\item the inclusion $\Sigma G\to BG$ factors through its restriction $\Sigma A\to BG$ via $t$.
\end{enumerate}
\end{theorem}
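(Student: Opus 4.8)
The plan is to build the retraction $t$ from the product decomposition $G\simeq B_1\times\cdots\times B_{p-1}$ together with the cell structure of each $A_i\subset B_i$, and then to deduce the factorization in (2) by showing that the canonical evaluation $\Sigma G\to BG$ annihilates the complementary wedge summand produced in (1). Throughout I work $p$-locally, where every space in sight is a simply connected finite $p$-local CW complex with torsion-free homology, so that a homology isomorphism is already a homotopy equivalence and cohomological splittings have a chance of being promoted to honest ones.

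For (1), I would first suspend the decomposition and apply the natural splitting $\Sigma(X\times Y)\simeq\Sigma X\vee\Sigma Y\vee\Sigma(X\wedge Y)$ repeatedly to obtain
\[ \Sigma G\simeq\bigvee_{\emptyset\neq S\subseteq\{1,\dots,p-1\}}\Sigma\Big(\bigwedge_{i\in S}B_i\Big). \]
On each single factor I claim $\Sigma B_i$ splits off $\Sigma A_i$. Indeed $A_i$ is the subcomplex carrying the module generators of $H_*(B_i)=\Lambda(\widetilde{H}_*(A_i))$, so it is the lower skeleton obtained by deleting the cells dual to the decomposable classes; since a suspension has trivial cup products, these deleted top cells form a direct summand of $H^*(\Sigma B_i)$, and the residual Steenrod module structure is concentrated on $\Sigma A_i$, so they split off as a wedge. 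Projecting each $\Sigma B_i$ onto its $\Sigma A_i$ summand, discarding every smash summand with $|S|\ge 2$, and using $\Sigma A=\Sigma A_1\vee\cdots\vee\Sigma A_{p-1}$ then assembles the desired left homotopy inverse $t$ of $\Sigma A\to\Sigma G$.

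For (2), write the splitting from (1) as $\Sigma G\simeq\Sigma A\vee C$, where $C$ collects the deleted top cells of each $\Sigma B_i$ together with all smash summands indexed by $|S|\ge 2$, so that $\Sigma j\circ t$ is precisely the idempotent projecting onto $\Sigma A$. Denoting by $\iota\colon\Sigma G\to BG$ the inclusion, the asserted factorization $\iota\simeq(\iota\circ\Sigma j)\circ t$ is equivalent to $\iota|_C\simeq 0$. On cohomology this is immediate: the cohomology suspension annihilates the decomposables of the polynomial algebra $H^*(BG)$ and carries its generators to the exterior generators of $H^*(G)$, so the image of $\iota^*$ is detected entirely on the summand $\Sigma A$, and every class supported on $C$ lies in the kernel.

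The main obstacle, and the step I would spend the most care on, is upgrading this cohomological vanishing to a genuine null-homotopy of $\iota|_C$. Here the dimension constraints of Table \ref{(G,p)} are essential: they place the cells of $C$ in a range where $[C,BG]$ is detected by mod $p$ cohomology, equivalently where $BG$ carries no interfering $p$-torsion homotopy, so that cohomological triviality of $\iota|_C$ forces $\iota|_C\simeq 0$. The same range hypothesis is exactly what makes the summand splittings in (1) honest homotopy splittings rather than merely homological ones. Once $\iota|_C\simeq 0$ is established, the factorization in (2) follows formally, completing the proof.
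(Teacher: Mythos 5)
The theorem you are proving is stated in the paper without proof: it is imported from Kishimoto \cite{Ki} and Theriault \cite{Th2}, whose arguments rest on the Cohen--Neisendorfer construction \cite{CN}, so that is the benchmark. Measured against it, your proposal has a genuine gap in each part. In (1), the step ``the deleted cells form a direct summand of $H^*(\Sigma B_i)$ and the residual Steenrod module structure is concentrated on $\Sigma A_i$, so they split off as a wedge'' is a non sequitur: a splitting of mod $p$ cohomology as a module over the Steenrod algebra does not yield a splitting of the space, because suspended attaching maps can be essential while invisible to primary operations (they are detected by secondary operations and the $\alpha$-family --- precisely the kind of phenomenon the complexes $B(2i-1,2i+2p-3)$, built from $\alpha_1$, exhibit). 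For $r_i=2$ the splitting of $\Sigma B_i$ is in fact true, but for a different reason, namely the James--Whitehead theorem that the top cell of a sphere bundle over a sphere attaches by a relative Whitehead product; for general $(G,p)$ in Table \ref{(G,p)} (e.g.\ $\SU(n)$ with $n$ up to $(p-1)(p-2)+1$, where $B_i$ can have rank close to $p-1$) no such elementary argument exists, and the retraction of $\Sigma B_i$ onto $\Sigma A_i$ is obtained by exhibiting $B_i$ as a retract of $\Omega\Sigma A_i$ via Cohen--Neisendorfer theory and then using the James splitting of $\Sigma\Omega\Sigma A_i$. That is the real content of the theorem, and your cohomological argument does not reach it.

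In (2) the upgrading step fails as stated. The bounds in Table \ref{(G,p)} do not place $C$ in a range where $[C,BG]$ is detected by mod $p$ cohomology: $BG$ is nowhere near a product of Eilenberg--MacLane spaces below $\dim\Sigma G$, since already $\alpha_1$-composites give $p$-torsion in $\pi_*(BG)$ invisible to cohomology far inside the relevant dimensions, so cohomological vanishing of $\iota|_C$ does not force $\iota|_C\simeq 0$. Those bounds are instead calibrated so that all cells of $A$ lie in the range where Cohen--Neisendorfer theory applies. Note that (2) is adjointly equivalent to the assertion that the composite $G\xrightarrow{E}\Omega\Sigma G\xrightarrow{\Omega t}\Omega\Sigma A\xrightarrow{h}G$ is homotopic to the identity, where $h$ is the multiplicative extension of the inclusion $A\to G$ --- this is exactly how the paper uses (2) in the proof of Lemma \ref{exotic} --- i.e.\ it says $G$ retracts off $\Omega\Sigma A$ compatibly with $t$. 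That multiplicative statement is what Theriault actually proves, and it cannot be recovered by an obstruction count on the complementary wedge summand; so while your reduction of (2) to $\iota|_C\simeq 0$ is formally correct, the proof of that null-homotopy is missing.
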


Let $\epsilon_i\colon B_i\to G$ and $\bar{\epsilon}_i\colon A_i\to G$ be inclusions. 

\begin{corollary}
[Kishimoto {\cite{Ki}}]
Suppose that $(G,p)$ is in Table \ref{(G,p)}. The Samelson product $\langle\epsilon_i,\epsilon_j\rangle$ is trivial if and only if so is $\langle\bar{\epsilon}_i,\bar{\epsilon}_j\rangle$.
\end{corollary}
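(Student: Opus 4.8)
The plan is to recast the statement as one about Whitehead products in $BG$ and then feed in Theorem \ref{A}. Recall that for $\alpha\colon X\to G=\Omega BG$ and $\beta\colon Y\to G$ the adjoint of the Samelson product $\langle\alpha,\beta\rangle\colon X\wedge Y\to G$ is, up to sign, the Whitehead product $[\widehat\alpha,\widehat\beta]\colon\Sigma(X\wedge Y)\to BG$ of the adjoints $\widehat\alpha\colon\Sigma X\to BG$ and $\widehat\beta\colon\Sigma Y\to BG$. Since the adjunction $[\Sigma(X\wedge Y),BG]\cong[X\wedge Y,G]$ is a bijection, $\langle\epsilon_i,\epsilon_j\rangle$ is trivial if and only if $[\widehat\epsilon_i,\widehat\epsilon_j]$ is, and likewise $\langle\bar\epsilon_i,\bar\epsilon_j\rangle$ is trivial if and only if $[\widehat{\bar\epsilon}_i,\widehat{\bar\epsilon}_j]$ is. Writing $k_i\colon A_i\hookrightarrow B_i$ for the inclusion, so that $\bar\epsilon_i=\epsilon_i\circ k_i$ and hence $\widehat{\bar\epsilon}_i=\widehat\epsilon_i\circ\Sigma k_i$, naturality of the Whitehead product in the domain gives
$$[\widehat{\bar\epsilon}_i,\widehat{\bar\epsilon}_j]=[\widehat\epsilon_i,\widehat\epsilon_j]\circ\Sigma(k_i\wedge k_j).$$
This already settles the ``only if'' direction: if $\langle\epsilon_i,\epsilon_j\rangle\simeq\ast$ then the right-hand side is null, so $\langle\bar\epsilon_i,\bar\epsilon_j\rangle$ is trivial.

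For the harder ``if'' direction I would invoke Theorem \ref{A}. Let $j\colon\Sigma G\to BG$ be the adjoint of the identity and $\bar j\colon\Sigma A\to BG$ its restriction; by part (2) we may write $j\simeq\bar j\circ t$, so that $\widehat\epsilon_i=\bar j\circ\tau_i$ with $\tau_i:=t\circ\Sigma\epsilon_i\colon\Sigma B_i\to\Sigma A$. Target-naturality of the Whitehead product then transports the problem into the wedge $\Sigma A=\Sigma A_1\vee\cdots\vee\Sigma A_{p-1}$:
$$[\widehat\epsilon_i,\widehat\epsilon_j]=\bar j\circ[\tau_i,\tau_j],\qquad [\widehat{\bar\epsilon}_i,\widehat{\bar\epsilon}_j]=\bar j\circ[\iota_i,\iota_j],$$
where $\iota_\ell\colon\Sigma A_\ell\hookrightarrow\Sigma A$ is the summand inclusion. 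Using part (1) one checks that $\tau_\ell\circ\Sigma k_\ell\simeq\iota_\ell$, so that $\Sigma A_\ell$ is a suspension-retract of $\Sigma B_\ell$; choosing $t$ compatibly with the James splitting of $\Sigma G$ and with these retractions (as in \cite{Th2}) makes $\tau_\ell$ factor as $\iota_\ell\circ\rho_\ell$ through a retraction $\rho_\ell\colon\Sigma B_\ell\to\Sigma A_\ell$.

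With this factorization I would finish by comparing universal Whitehead products. Writing each bracket through the universal one, $[\tau_i,\tau_j]=(\tau_i\mid\tau_j)\circ w_B$ with $w_B\colon\Sigma(B_i\wedge B_j)\to\Sigma B_i\vee\Sigma B_j$ and $(\tau_i\mid\tau_j)=(\iota_i\mid\iota_j)\circ(\rho_i\vee\rho_j)$, and $[\iota_i,\iota_j]=(\iota_i\mid\iota_j)\circ w_A$ with $w_A\colon\Sigma(A_i\wedge A_j)\to\Sigma A_i\vee\Sigma A_j$. The map $w$ is the connecting map of the cofibration $\Sigma X\vee\Sigma Y\to\Sigma X\times\Sigma Y\to\Sigma X\wedge\Sigma Y$, which is natural for the product map $\rho_i\times\rho_j$; since $\rho_i\wedge\rho_j$ is a suspension, desuspending the resulting naturality square once produces $R\colon\Sigma(B_i\wedge B_j)\to\Sigma(A_i\wedge A_j)$ with $(\rho_i\vee\rho_j)\circ w_B\simeq w_A\circ R$. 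Hence $[\tau_i,\tau_j]\simeq[\iota_i,\iota_j]\circ R$, and therefore
$$[\widehat\epsilon_i,\widehat\epsilon_j]\simeq\bar j\circ[\iota_i,\iota_j]\circ R=[\widehat{\bar\epsilon}_i,\widehat{\bar\epsilon}_j]\circ R,$$
so triviality of $[\widehat{\bar\epsilon}_i,\widehat{\bar\epsilon}_j]$, i.e.\ of $\langle\bar\epsilon_i,\bar\epsilon_j\rangle$, forces triviality of $[\widehat\epsilon_i,\widehat\epsilon_j]$, i.e.\ of $\langle\epsilon_i,\epsilon_j\rangle$.

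The main obstacle is the step where $\tau_\ell$ is made to factor through the single summand $\Sigma A_\ell$ rather than through all of $\Sigma A$. The splitting $\Sigma B_\ell\simeq\Sigma A_\ell\vee\Sigma(B_\ell/A_\ell)$ is not the suspension of a splitting of $B_\ell$, so one cannot simply desuspend a retraction $B_\ell\to A_\ell$; instead one must verify that the homotopy inverse $t$ furnished by Theorem \ref{A} (or a suitable modification of it) respects both the James splitting of $\Sigma G$ and the wedge splittings of the individual $\Sigma B_\ell$. Granting this compatibility, the only remaining delicate point is that the universal Whitehead product is natural merely for the once-suspended maps $\rho_\ell\wedge\mathrm{id}$, which is exactly what the product-cofibration argument above supplies.
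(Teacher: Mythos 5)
Your ``only if'' direction is correct and is the standard naturality argument. The ``if'' direction, however, rests on a step you yourself flag and then simply grant: the factorization $\tau_\ell\simeq\iota_\ell\circ\rho_\ell$ of $\tau_\ell=t\circ\Sigma\epsilon_\ell\colon\Sigma B_\ell\to\Sigma A=\Sigma A_1\vee\cdots\vee\Sigma A_{p-1}$ through the single summand $\Sigma A_\ell$. Theorem \ref{A} is a pure existence statement: it gives a left inverse $t$ of $\Sigma A\to\Sigma G$ and the factorization $j\simeq\bar j\circ t$, but no control whatsoever over the components $\Sigma B_\ell\to\Sigma A_m$ for $m\ne\ell$. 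Since $\tau_\ell\circ\Sigma k_\ell\simeq\iota_\ell$, these cross-term components are carried by the top cell of $\Sigma B_\ell$ (dimension $4\ell-1+2(p-1)$ when $r_\ell=2$), and they are obstructed by elements of $\pi_{4\ell-1+2(p-1)}(\Sigma A_m)$ --- groups which are generally nonzero at $p$, so nothing forces them to vanish, and ``choosing $t$ compatibly'' is not something \cite{Th2} provides (its construction, via \cite{CN}, proceeds through H-maps out of $\Omega\Sigma G$ and gives no wedge-diagonal refinement; moreover any modification of $t$ killing cross-terms would have to be shown to preserve property (2) of Theorem \ref{A}, which your argument uses through $\bar j$). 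This matters for the conclusion, not just for elegance: if the cross-terms survive, the Hilton--Milnor expansion of $[\bar j\circ\tau_i,\bar j\circ\tau_j]$ contains, besides your term $[\widehat{\bar\epsilon}_i,\widehat{\bar\epsilon}_j]\circ R$, summands factoring through Whitehead products $[\widehat{\bar\epsilon}_m,\widehat{\bar\epsilon}_n]$ for other pairs $(m,n)$ and through iterated brackets, whose triviality is not part of the hypothesis --- indeed Theorem \ref{main} shows many such brackets are nontrivial, so they cannot be discarded. Hence triviality of $\langle\bar\epsilon_i,\bar\epsilon_j\rangle$ alone does not yield triviality of $[\widehat\epsilon_i,\widehat\epsilon_j]$ along the route you propose. (Your final reduction, by contrast, is fine: granting the factorization, the one-variable-at-a-time smash $\Sigma(B_i\wedge B_j)\to\Sigma(A_i\wedge A_j)$ and the once-suspended naturality of the universal Whitehead product are legitimate.)

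For comparison: the paper does not prove this corollary at all --- it quotes it from \cite{Ki} --- and the proof there runs through the multiplicative machinery the paper recalls in Section 6, namely the retraction $r\colon\Omega\Sigma A\to G$ with $r\circ\Omega t\circ E\simeq\mathrm{id}_G$ and the uniqueness of H-maps out of $G$ against homotopy associative, homotopy commutative targets (Propositions \ref{retraction} and \ref{universality}), together with the per-factor retractions of $B_\ell$ off $\Omega\Sigma A_\ell$ from \cite{Th1}. Working with H-extensions over $\Omega\Sigma A$ rather than with desuspended splittings of $\Sigma G$ is precisely what lets one handle the cross-terms that your argument leaves uncontrolled; if you want a self-contained proof, that is the route to take.
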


We then consider (non-)triviality of the Samelson products $\langle\bar{\epsilon}_i,\bar{\epsilon}_j\rangle$ instead of $\langle\epsilon_i,\epsilon_j\rangle$. We show triviality of the Samelson products by looking at the homotopy groups of spheres and $B_i$.

\begin{proposition}
[Toda \cite{To1}]
\label{pi(S)}
For $i\ge 2$ and $*\le 2i+2p(p-1)-4$, we have
$$\pi_*(S^{2i-1})\cong\begin{cases}
\Z_{(p)}&*=2i-1\\
\Z/p&*=2i-2+2j(p-1)\quad(j=1,\ldots,p-1)\\
\Z/p&*=2i-3+2j(p-1)\quad(j=i,\ldots,p-1)\\
0&\text{otherwise.}
\end{cases}$$
\end{proposition}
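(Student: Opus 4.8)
The plan is to reduce everything to the metastable range by means of $p$-local loop-space splittings and the EHP method, feeding in the $p$-primary stable stems as input; we work $p$-locally throughout. First I would record the clean information coming from the stable and near-stable range. By the Freudenthal suspension theorem the iterated suspension $\pi_*(S^{2i-1})\to\pi_{*+1}(S^{2i})\to\cdots$ is an isomorphism onto the stable stem $\pi^s_{*-(2i-1)}$ for $*\le 4i-4$. In the full range $*\le 2i+2p(p-1)-4$ the stems that occur lie strictly below the dimension $2p^2-2p-2$ of $\beta_1$ and below the dimension $2p^2-2p-1$ of the first order-$p^2$ element of the image of $J$; hence, by Adams' computation of the image of $J$, the only nonzero $p$-primary stable stems in this range are copies of $\Z/p$ generated by the $\alpha$-family elements $\alpha_j$ in stems $2j(p-1)-1$ for $j=1,\ldots,p-1$. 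Since each $\alpha_j$ already desuspends to $S^3$, it is represented on every $S^{2i-1}$ with $i\ge2$, and this accounts for the bottom class $\Z_{(p)}$ in degree $2i-1$ together with the first family of $\Z/p$'s in degrees $2i-2+2j(p-1)$.

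Next I would treat the metastable range $*>4i-4$ with the odd-primary EHP machinery. At an odd prime the Whitehead square on an even sphere is a unit multiple of the generator, so there is a $p$-local splitting $\Omega S^{2n}\simeq_{(p)}S^{2n-1}\times\Omega S^{4n-1}$; this makes the suspension $E\colon\pi_*(S^{2i-1})\to\pi_{*+1}(S^{2i})$ a split monomorphism and reduces all even-sphere groups to odd-sphere groups. The genuinely unstable classes are then produced by the $P$-homomorphism of the $p$-primary EHP sequence relating $S^{2i-1}$ to $S^{2ip-1}$ (Toda). Applying $P$ to the bottom class $\iota\in\pi_{2ip-1}(S^{2ip-1})$ yields a class $w\in\pi_{2ip-3}(S^{2i-1})$ of order $p$, sitting in degree $2i-3+2i(p-1)$, i.e. the $j=i$ member of the second family. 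Composing $w$ with the stable elements $\alpha_{j-i}$ for $j=i,\ldots,p-1$ produces the remaining second-family classes in degrees $2i-3+2j(p-1)$; the constraint $j\ge i$ is forced because $w$ already sits at $j=i$ and composition with the $\alpha$'s only raises the index. A parity count shows the two families never coincide, and the range bound guarantees every nonzero group is elementary, so no hidden extensions arise.

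Finally I would verify exhaustiveness: that the listed classes generate everything and that no group is larger than $\Z/p$. This is where the main work lies. The hard part is controlling the EHP connecting maps precisely — identifying the unstable part of $\pi_*(S^{2i-1})$ with the image of the $P$-homomorphism, proving $w=P(\iota)$ has order exactly $p$, and showing the compositions $w\circ\alpha_{j-i}$ are all nonzero and account for the whole second family. Each of these rests on Toda's explicit composition computations for the $\alpha$-family, and keeping the indeterminacy of the relevant secondary compositions under control near the top of the range, just below $\beta_1$, is the delicate point; away from that edge the EHP sequence degenerates and the bookkeeping becomes routine.
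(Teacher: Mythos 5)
The paper offers no proof of this proposition: it is quoted verbatim from Toda's book \cite{To1}, so your sketch is measured against the classical argument it attempts to reconstruct. Your stable-range analysis is sound (the range bound does place all stems strictly below $\beta_1$ and below $\alpha_p$, so the only stable contributions are the $\alpha_j$ in stems $2j(p-1)-1$, $j=1,\dots,p-1$, each desuspending to $S^3$), and your identification of the $j=i$ member of the second family with $w=P(\iota)\in\pi_{2ip-3}(S^{2i-1})$ coming from the odd-primary EHP fibration $S^{2i-1}\to\Omega J_{p-1}(S^{2i})\to\Omega S^{2ip-1}$ is correct.

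However, the step generating the rest of the second family fails on degree grounds. The element $w$ sits in stem $2i(p-1)-2$, and $\alpha_{j-i}$ has stem $2(j-i)(p-1)-1$, so the composite $w\circ\alpha_{j-i}$ lies in degree $2i-4+2j(p-1)$, one below the target degree $2i-3+2j(p-1)$; in fact, by the proposition itself that group vanishes throughout the range, so every one of your proposed generators with $j>i$ is zero. The actual generators are compositions of \emph{two} alpha elements, e.g.\ $\alpha_1(2i-1)\circ\alpha_{j-1}(2i+2p-4)$, whose stems add to $(2(p-1)-1)+(2(j-1)(p-1)-1)=2j(p-1)-2$ as required; these are stably trivial but unstably essential, and proving their nontriviality is exactly Toda's secondary-composition work. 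Relatedly, your justification of the cutoff $j\ge i$ (``composition with the $\alpha$'s only raises the index'') is not an argument: these classes already exist on $S^3$ for all $j\ge 2$, and the true mechanism is exactness in the EHP sequence --- on $S^{2j-1}$ the class in stem $2j(p-1)-2$ coincides up to a unit with $P(\iota_{2jp-1})$, hence lies in the kernel of the double suspension and dies on $S^{2j+1}$, which is why the family on $S^{2i-1}$ starts at $j=i$. Your deferral of exhaustiveness to Toda's computations is acceptable in a sketch, but as written the construction of the unstable classes, and with it the heart of the proposal, does not go through.
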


\begin{proposition}
[Mimura and Toda \cite{MT}, and Kishimoto \cite{Ki}]
\label{pi(B)}
For $*\le 2p(p-1)$, we have
$$\pi_*(B(3,2p+1))\cong\begin{cases}
\Z_{(p)}&*=3,3+2(p-1)\\
\Z/p&*=2j(p-1)+2\quad(j=2,\ldots,p-1)\\
0&\text{otherwise}
\end{cases}$$
and for $i\ge 3$ and $*\leq2i-4+2p(p-1)$, we have
$$\pi_*(B(2i-1,2i-1+2(p-1)))\cong\begin{cases}
\Z_{(p)}&*=2i-1,2i-1+2(p-1)\\
\Z/p^2&*=2i-2+2j(p-1)\quad(j=2,\ldots,p-1)\\
\Z/p&*=2i-3+2j(p-1)\quad(j=i,\ldots,p-1)\\
0&\text{otherwise.}
\end{cases}$$
\end{proposition}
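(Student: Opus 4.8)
The plan is to read off both groups from the homotopy exact sequence of the defining bundle
\[
S^{2i-1}\xrightarrow{\;j\;}B(2i-1,2i-1+2(p-1))\xrightarrow{\;q\;}S^{n},\qquad n:=2i-1+2(p-1),
\]
feeding in Toda's Proposition \ref{pi(S)} to supply $\pi_*(S^{2i-1})$ and $\pi_*(S^{n})$ throughout the stated range. Once these groups are in hand, the entire computation reduces to identifying the connecting homomorphism $\partial\colon\pi_*(S^{n})\to\pi_{*-1}(S^{2i-1})$. Since the bundle is classified by $\tfrac12\alpha_1\in\pi_{n-1}(S^{2i-1})$, the fundamental class $\iota_n$ satisfies $\partial(\iota_n)=\tfrac12\alpha_1$, a generator of the $j=1$ summand $\Z/p$ of $\pi_{n-1}(S^{2i-1})$; on the remaining $\alpha$-family generators $\partial$ is computed by composition with $\tfrac12\alpha_1$, i.e.\ by the products $\alpha_1\circ\alpha_k$ in the homotopy of spheres, which vanish in the stable range for $p\ge5$.

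Feeding this in, in each degree $2i-2+2j(p-1)$ with $j=2,\dots,p-1$ one finds that $\pi_*(S^{2i-1})\cong\Z/p$ (the fiber class $\alpha_j$) and $\pi_*(S^{n})\cong\Z/p$ (the base class $\alpha_{j-1}$) both contribute, and the vanishing of $\partial$ on the adjacent groups produces a short exact sequence
\[
0\to\Z/p\xrightarrow{\;j_*\;}\pi_*(B)\xrightarrow{\;q_*\;}\Z/p\to0 .
\]
To see that this extension is nontrivial for $i\ge3$, I would lift the base generator $\alpha_{j-1}$ to $\tilde\alpha\in\pi_*(B)$ and compute $p\tilde\alpha=j_*$ of a Toda bracket of the form $\langle\tfrac12\alpha_1,p,\alpha_{j-1}\rangle$, which by the $\alpha$-family relations represents $\pm\tfrac12\alpha_j$ and hence generates the fiber summand; thus $\tilde\alpha$ has order $p^2$ and $\pi_*(B)\cong\Z/p^2$. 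For $B(3,2p+1)$, that is $i=2$, the fiber $S^3$ carries the extra unstable class $\pi_{2i-3+2j(p-1)}(S^3)\cong\Z/p$, and the corresponding composition $\tfrac12\alpha_1\circ\alpha_{j-1}$ is nonzero there; consequently the outgoing $\partial$ is onto, the extension collapses, and only $\Z/p$ survives. The $\Z_{(p)}$ summands in both formulas are then accounted for by the fiber fundamental class together with the image $p\,\Z_{(p)}\cong\Z_{(p)}$ of the base fundamental class under $q_*$ (the index $p$ coming precisely from $\partial(\iota_n)=\tfrac12\alpha_1\ne0$).

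The main obstacle is the precise determination of $\partial$ on the unstable $\alpha$-family elements: whether the outgoing connecting map vanishes or is an isomorphism is exactly what distinguishes the $\Z/p^2$ occurring for $B(2i-1,2i-1+2(p-1))$ with $i\ge3$ from the $\Z/p$ occurring for $B(3,2p+1)$. This rests on Toda's composition and Toda-bracket relations in the $p$-primary homotopy of spheres, together with the verification that every higher product $\alpha_1\circ\alpha_k$ entering the sequence vanishes in the stable range for $p\ge5$. The remaining work—checking in each degree that the group and the differential are the ones dictated by Proposition \ref{pi(S)}, and that the adjacent groups forcing injectivity of $j_*$ and surjectivity of $q_*$ indeed vanish—is the routine but delicate degree bookkeeping.
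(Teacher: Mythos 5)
You should first note that the paper contains no proof of this proposition: it is imported from Mimura--Toda \cite{MT} and (for the extended range) Kishimoto \cite{Ki}, so your sketch can only be compared with the classical argument of those sources --- and it is essentially that argument. The homotopy exact sequence of the bundle $S^{2i-1}\to B\to S^{n}$ with $\partial(\iota_n)=\tfrac12\alpha_1$, the identification $\partial(\alpha_{j-1}(n))=\tfrac12\alpha_1\circ\alpha_{j-1}$ on the (suspended) generators, and the resolution of the extension $0\to\Z/p\to\pi_{2i-2+2j(p-1)}(B)\to\Z/p\to0$ by the bracket $\langle\tfrac12\alpha_1,p,\alpha_{j-1}\rangle\ni\pm\tfrac12\alpha_j$ is exactly the composition-methods computation behind the cited result; your bookkeeping is also internally consistent, since the same vanishing $\alpha_1(2i-1)\circ\alpha_{j-1}=0$ that makes $q_*$ surjective in degrees $2i-2+2j(p-1)$ is what leaves the unstable classes alive in degrees $2i-3+2j(p-1)$ for $j\ge i$, while on $S^3$ the nonvanishing of $\alpha_1\circ\alpha_{j-1}$ simultaneously truncates the extension to $\Z/p$ and kills the classes in degrees $2j(p-1)+1$ --- precisely the dichotomy the stated groups force.

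One caution on a point of logic: your parenthetical justification that ``the products $\alpha_1\circ\alpha_k$ vanish in the stable range'' does not by itself determine $\partial$ when $j\ge i$, because the target $\pi_{2i-3+2j(p-1)}(S^{2i-1})\cong\Z/p$ is then an \emph{unstable} group whose stable stem is zero, so stable vanishing is vacuous there; the genuine inputs are the unstable facts $\alpha_1(2i-1)\circ\alpha_{j-1}=0$ for $i\ge3$ and $\alpha_1(3)\circ\alpha_{j-1}\ne0$, which you do correctly single out in your closing paragraph as the crux, and which are Toda's composition results constituting the real content of \cite{MT}. Two small checks should be made explicit if this were written out: the bracket's indeterminacy $\tfrac12\alpha_1\circ\pi_*(S^{n-1})+\pi_*(S^{2i-1})\circ\Sigma\alpha_{j-1}$ vanishes in this range (both composition groups sit in stems $\equiv 0 \bmod 2(p-1)$, hence are zero), and the stable relation $\alpha_j\in\langle\alpha_1,p,\alpha_{j-1}\rangle$ descends to $S^{2i-1}$ because suspension is injective on the relevant $\Z/p$. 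With those classical facts quoted, your proof plan is correct and complete.
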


When $G$ is a quasi-$p$-regular simple Lie group except for $\mathrm{Spin}(4n)$, there is a one-to-one correspondence between $t_p(G)$ and non-trivial $B_i$, and we have
$$A_i\simeq\begin{cases}
S^{2i-1}&r_i=1\\
S^{2i-1}\cup_{\alpha_1}e^{2i-1+2(p-1)}&r_i=2.
\end{cases}$$
for $i\in t_p(G)$. 

\begin{corollary}
\label{pi(A)}
Suppose that $G$ is a quasi-$p$-regular simple Lie group except for $\mathrm{Spin}(4n)$. For $i,j\in t_p(G)$, we have:
\begin{enumerate}
\item if there is no $k\in t_p(G)$ such that $i+j\equiv k\mod(p-1)$ and $i+j+(r_i+r_j-1)(p-1)> k+r_k(p-1)$, then the homotopy set $[A_i\wedge A_j,G]$ is trivial;
\item if there is $k\in t_p(G)$ such that $i+j\equiv k\mod(p-1)$ and $i+j+(r_i+r_j-1)(p-1)> k+r_k(p-1)$, then $k$ is unique and $[A_i\wedge A_j,G]\cong[A_i\wedge A_j,B_k]$.
\end{enumerate}
\end{corollary}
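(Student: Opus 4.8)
The plan is to use the product decomposition $G\simeq B_1\times\cdots\times B_{p-1}$ coming from the mod $p$ decomposition to split the computation, and then to read each factor off the cell structure of $A_i\wedge A_j$ against the homotopy groups recorded in Propositions \ref{pi(S)} and \ref{pi(B)}. By the universal property of the product there is an isomorphism of pointed sets
$$[A_i\wedge A_j,G]\cong\prod_{k\in t_p(G)}[A_i\wedge A_j,B_k],$$
the trivial factors $B_k=\ast$ contributing nothing, so it suffices to analyze each $[A_i\wedge A_j,B_k]$ separately. Using $A_i\simeq S^{2i-1}$ or $S^{2i-1}\cup_{\alpha_1}e^{2i-1+2(p-1)}$ according to $r_i=1,2$, the cells of $A_i\wedge A_j$ are concentrated in the \emph{even} dimensions $d_m=2i+2j-2+2m(p-1)$ for $m=0,\dots,r_i+r_j-2$, the intermediate dimensions possibly carrying two cells.

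The first main step is a vanishing criterion. Filtering $A_i\wedge A_j$ by its skeleta $F_0\subset\cdots\subset F_{r_i+r_j-2}=A_i\wedge A_j$, where $F_m$ is the $d_m$-skeleton, the Puppe sequence of the cofibration $F_{m-1}\to F_m\to F_m/F_{m-1}=\bigvee S^{d_m}$ gives an exact sequence of pointed sets
$$[\bigvee S^{d_m},B_k]\longrightarrow[F_m,B_k]\longrightarrow[F_{m-1},B_k],$$
whose left-hand term is a finite product of copies of $\pi_{d_m}(B_k)$. Hence if $\pi_{d_m}(B_k)=0$ for every $m$, then $[A_i\wedge A_j,B_k]$ is trivial by induction on $m$, with no co-$H$-structure on the source required. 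Since each $d_m$ is even, Propositions \ref{pi(S)} and \ref{pi(B)} show that $\pi_{d_m}(B_k)$ is nonzero only when $d_m=2k-2+2a(p-1)$, that is only when $i+j\equiv k\pmod{p-1}$ and $a=m+(i+j-k)/(p-1)$ satisfies $a\ge1$ (for $r_k=1$) or $a\ge2$ (for $r_k=2$); the values $a\le0$ fall below the connectivity of $B_k$, and $a=1$ with $r_k=2$ lands in a trivial group, while all the $d_m$ in our range stay within the ranges where the Propositions compute $\pi_\ast(B_k)$. A direct computation, using that the congruence makes $(i+j-k)/(p-1)$ an integer, shows that the top cell $m=r_i+r_j-2$ attains $a\ge r_k$ exactly when $i+j+(r_i+r_j-1)(p-1)>k+r_k(p-1)$.

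These two observations already give the statement. If no $k\in t_p(G)$ satisfies both the congruence and the inequality, then for every $k$ either the congruence fails (so every even $d_m$ misses the nonzero homotopy of $B_k$) or, when the congruence holds, the inequality fails (so every cell has $a<r_k$); in both cases all $\pi_{d_m}(B_k)$ vanish and each factor is trivial, proving (1). For (2), uniqueness rests on the residues of $t_p(G)$: by the one-to-one correspondence between $t_p(G)$ and the nontrivial factors $B_s$ $(s=1,\dots,p-1)$ of the mod $p$ decomposition, and since $B_s$ is resolvable by spheres of dimension $2s-1\bmod 2(p-1)$, each $k\in t_p(G)$ is congruent modulo $p-1$ to the index $s$ of its corresponding factor; hence distinct elements of $t_p(G)$ carry distinct residues modulo $p-1$. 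Therefore at most one $k\in t_p(G)$ can satisfy $i+j\equiv k\pmod{p-1}$, which gives uniqueness, and every other $k'\in t_p(G)$ fails this congruence and so contributes a trivial factor by the vanishing criterion; the product collapses to $[A_i\wedge A_j,G]\cong[A_i\wedge A_j,B_k]$.

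The main point requiring care is the bookkeeping in the last two steps: one must verify that, whenever the numerical condition fails, every cell dimension $d_m$ genuinely misses the nonzero even homotopy groups of $B_k$. This means tracking the boundary indices $a=0,1$, accounting for the special factor $B(3,2p+1)$ whose even homotopy is $\Z/p$ rather than $\Z/p^2$, and confirming that all relevant $d_m$ remain within the range of validity of Propositions \ref{pi(S)} and \ref{pi(B)}; none of these presents a genuine difficulty, but the verification is where the precise form of the inequality in the statement is pinned down.
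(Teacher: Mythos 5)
Your proposal is correct and takes essentially the same route as the paper: the paper's one-line proof simply observes that $A_i\wedge A_j$ has cells only in the even dimensions $2i+2j-2+2r(p-1)$ for $0\le r\le r_i+r_j-2$ and then invokes Propositions \ref{pi(S)} and \ref{pi(B)}, which is precisely what your product splitting, skeletal induction, and residue bookkeeping spell out in detail. Your numerical verification (the boundary cases $a\le 0$ and $a=1$ with $r_k=2$, and the uniqueness of $k$ via distinct residues of $t_p(G)$ modulo $p-1$) is accurate and fills in exactly the steps the paper leaves implicit.
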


\begin{proof}
Since $A_i\wedge A_j$ for $i,j\in t_p(G)$ has cells in dimension $2i+2j-2+2r(p-1)$ for $0\le r\le r_i+r_j-2$, the corollary follows from Proposition \ref{pi(S)} and \ref{pi(B)}.
\end{proof}

\begin{corollary}
\label{Samelson-1}
Suppose that $G$ is a quasi-$p$-regular simple Lie group except for $\mathrm{Spin}(4n)$.  If for $i,j\in t_p(G)$ there is no $k\in t_p(G)$ such that 
$$i+j\equiv k\mod(p-1)\quad\text{and}\quad i+j+(r_i+r_j-1)(p-1)> k+r_k(p-1),$$
then the Samelson product $\langle\bar{\epsilon}_i,\bar{\epsilon}_j\rangle$ is trivial.
\end{corollary}

We further prove triviality of the Samelson products $\langle\bar{\epsilon}_i,\bar{\epsilon}_j\rangle$ in the special cases.

\begin{proposition}
\label{Samelson-2}
Suppose that $G$ is a quasi-$p$-regular simple Lie group except for $\mathrm{Spin}(4n)$.  If for $i,j\in t_p(G)$, there is $k\in t_p(G)$ such that 
$$r_i+r_j\ge 3\quad\text{and}\quad k=i+j+(r_i+r_j-3)(p-1),$$
then the Samelson product $\langle\bar{\epsilon}_i,\bar{\epsilon}_j\rangle$ is trivial.
\end{proposition}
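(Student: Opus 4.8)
The plan is to reduce, via the Corollary following Theorem \ref{A}, to proving that the Samelson product $\langle\bar\epsilon_i,\bar\epsilon_j\rangle\colon A_i\wedge A_j\to G$ is trivial, and then to show that its only possibly nontrivial component after the mod $p$ decomposition vanishes by a cofiber sequence argument. Write $k_0=i+j+(r_i+r_j-3)(p-1)$ for the index supplied by the hypothesis. Since $k_0\equiv i+j\pmod{p-1}$, Corollary \ref{pi(A)} applies: if no index satisfies its condition (2) then $[A_i\wedge A_j,G]$ is already trivial and there is nothing to prove, so I may assume we are in case (2). Inspecting the list of $t_p(G)$ one sees that the unique index provided there is exactly $k_0$ and that $r_{k_0}=1$ (when instead $r_{k_0}=2$ the index $k_0$ fails the strict inequality, no index qualifies, and Corollary \ref{Samelson-1} already gives triviality). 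Thus $B_{k_0}\simeq S^{2k_0-1}$ and, by Corollary \ref{pi(A)}(2), it suffices to prove $[A_i\wedge A_j,S^{2k_0-1}]=0$.

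Next I would analyze the cell structure. As in the proof of Corollary \ref{pi(A)}, the smash $A_i\wedge A_j$ has cells only in dimensions $2i+2j-2+2r(p-1)$ for $0\le r\le r_i+r_j-2$, and the defining relation gives $2k_0-1=(2i+2j-2+2(r_i+r_j-3)(p-1))+1$; that is, the sphere $S^{2k_0-1}$ sits one dimension above the cell at level $r=r_i+r_j-3$. Using Proposition \ref{pi(S)} together with the connectivity of $S^{2k_0-1}$, every cell of $A_i\wedge A_j$ below the top maps trivially into $S^{2k_0-1}$, while the top cell, of dimension $D:=2i+2j-2+2(r_i+r_j-2)(p-1)=2k_0-2+2(p-1)$, contributes $\pi_D(S^{2k_0-1})\cong\Z/p$, generated by $\alpha_1$. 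Hence any map $A_i\wedge A_j\to S^{2k_0-1}$ is trivial on the subcomplex $Y'$ obtained by deleting the top cell, and therefore factors through the pinch map $q\colon A_i\wedge A_j\to A_i\wedge A_j/Y'=S^D$.

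Finally I would run the Barratt--Puppe sequence of the cofibration $Y'\to A_i\wedge A_j\xrightarrow{q}S^D$. Applying $[-,S^{2k_0-1}]$ gives exactness of
$$[\Sigma Y',S^{2k_0-1}]\xrightarrow{\partial^{*}}[S^{D},S^{2k_0-1}]\xrightarrow{q^{*}}[A_i\wedge A_j,S^{2k_0-1}]\longrightarrow[Y',S^{2k_0-1}],$$
where the last group vanishes by connectivity as above, so that $q^{*}$ is onto with kernel the image of $\partial^{*}$. The crux is to identify the connecting map $\partial$: since $A_i$ and $A_j$ are mapping cones of $\alpha_1$, the top cell of $A_i\wedge A_j$ is attached to the level-$(r_i+r_j-3)$ cell by a suspension of $\alpha_1$, so composing $\partial$ with the pinch of $\Sigma Y'$ onto its top sphere $S^{2k_0-1}$ gives $\Sigma\alpha_1=\alpha_1$, a generator of $\pi_D(S^{2k_0-1})\cong\Z/p$. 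Thus $\partial^{*}$ is surjective, forcing $q^{*}=0$ and hence $[A_i\wedge A_j,S^{2k_0-1}]=0$, which completes the argument. The main obstacle I anticipate is precisely this last identification: pinning down the attaching map of the top cell of the smash product, in particular handling the bookkeeping of the doubled middle cells when $r_i=r_j=2$, and verifying that the induced connecting homomorphism $\partial^{*}$ indeed hits a generator of $\Z/p$.
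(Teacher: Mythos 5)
Your proposal is correct and follows essentially the same route as the paper: reduce via Corollary \ref{pi(A)} (with the same dichotomy on $r_k$) to showing $[A_i\wedge A_j,S^{2k-1}]=0$, then kill this group through the Puppe sequence of the top-cell cofibration, using that precomposition with the attaching map $\alpha_1$ surjects onto $\pi_{2k+2p-4}(S^{2k-1})\cong\Z/p$. The only cosmetic difference is that in the $r_i=r_j=2$ case the paper collapses the bottom cell and splits the quotient as $S^{2(i+j+p-2)}\vee\bigl(S^{2(i+j+p-2)}\cup_{\alpha_1}e^{2(i+j+2p-3)}\bigr)$, whereas you pinch onto the top cell directly; the underlying identification of the attaching map is the same.
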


\begin{proof}
By Corollary \ref{pi(A)}, we have $[A_i\wedge A_j,G]\cong[A_i\wedge A_j,B_k]$, so it is sufficient to show that $[A_i\wedge A_j,S^{2k-1}]$ is trivial since $r_k=1$ for a degree reason. We first consider the case $r_i+r_j=3$. In this case, we have $A_i\wedge A_j\simeq S^{2(i+j-1)}\cup_{\alpha_1}e^{2(i+j+p-2)}$, where $i+j=k$. Then there is a homotopy cofibration $S^{2(i+j+p-2)-1}\xrightarrow{\alpha_1}S^{2(i+j-1)}\to A_i\wedge A_j$ which induces an exact sequence
$$\pi_{2k-1}(S^{2k-1})\xrightarrow{\alpha_1^*}\pi_{2k+2p-4}(S^{2k-1})\to[A_i\wedge A_j,S^{2k-1}]\to\pi_{2k-2}(S^{2k-1}).$$
Since $\pi_{2k+2p-4}(S^{2k-1})$ is generated by $\alpha_1$, the second arrow is trivial, so for $\pi_{2k-2}(S^{2k-1})=0$, we get $[A_i\wedge A_j,S^{2k-1}]=*$.

We next consider the case $r_i=r_j=2$. In this case, we have $[A_i\wedge A_j,S^{2k-1}]\cong[A_i\wedge A_j/S^{2(i+j-1)},S^{2k-1}]$ and $A_i\wedge A_j/S^{2(i+j-1)}\simeq S^{2(i+j+p-2)}\vee(S^{2(i+j+p-2)}\cup_{\alpha_1}e^{2(i+j+2p-3)})$, where $k=i+j+p-1$. Then we get $[A_i\wedge A_j,S^{2k-1}]=*$ in the same way as above.
\end{proof}

By Corollary \ref{Samelson-1} and Proposition \ref{Samelson-2}, it remains to check (non-)triviality of the Samelson products $\langle\bar{\epsilon}_i,\bar{\epsilon}_j\rangle$ for $(G,p,\{i,j\})$ in the following table. 

\renewcommand{\arraystretch}{1.2}
\begin{table}[H]
\caption{}
\label{non-trivial1}
\centering
\begin{tabular}{lll}
$\E_6$&$p=5$&$\{2,8\},\{5,5\},\{5,8\},\{8,8\}$\\
&$p=7$&$\{2,6\},\{2,9\},\{5,6\},\{5,9\},\{6,6\},\{6,9\},\{9,9\}$\\
&$p=11$&$\{6,9\},\{8,8\},\{9,9\}$\\
$\E_7$&$p=11$&$\{2,8\},\{2,10\},\{2,14\},\{6,10\},\{6,14\},\{8,8\},\{8,10\},\{8,14\},\{10,10\},$\\
&&$\{10,14\},\{14,14\}$\\
&$p=13$&$\{2,6\},\{2,12\},\{6,6\},\{6,8\},\{6,12\},\{8,12\},\{10,10\},\{10,12\},\{12,12\}$\\
&$p=17$&$\{8,14\},\{10,12\},\{10,14\},\{12,12\},\{12,14\},\{14,14\}$\\
$\E_8$&$p=11$&$\{2,20\},\{8,14\},\{8,20\},\{14,14\},\{14,20\},\{20,20\}$\\
&$p=13$&$\{2,12\},\{2,18\},\{8,12\},\{8,18\},\{12,12\},\{12,18\},\{18,18\}$\\
&$p=17$&$\{8,20\},\{14,14\},\{14,20\},\{20,20\}$\\
&$p=19$&$\{2,12\},\{2,18\},\{2,24\},\{8,12\},\{8,18\},\{8,24\},\{12,12\},\{12,14\},\{12,18\},$\\
&&$\{12,24\},\{14,18\},\{14,24\},\{18,18\},\{18,24\},\{24,24\}$\\
&$p=23$&$\{14,20\},\{18,18\},\{20,20\}$\\
&$p=29$&$\{12,24\},\{18,18\},\{18,24\},\{20,20\},\{24,24\}$
\end{tabular}
\end{table}


\section{Cohomology of $BG$}

The $\G_2$ case and the $p$-regular case are done in the previous section and Theorem \ref{HKO}. Since the inclusion $\F_4\to\E_6$ has a right homotopy inverse at the prime $p\ge 3$ as in the table of the mod $p$ decomposition, we only consider $\E_6$ at $p=5,7,11$, $\E_7$ at $p=11,13,17$ and $\E_8$ at $p=11,13,17,19,23,29$.

The coefficient of cohomology will be $\Z_{(p)}$ unless otherwise is specified. Suppose that $H_*(G)$ has no $p$-torsion and $t(G)=\{n_1,\ldots,n_\ell\}$. Then the cohomology of the classifying space $BG$ is given by
$$H^*(BG)=\Z_{(p)}[x_{2n_1},\ldots,x_{2n_\ell}],\quad|x_i|=i.$$
We recall from \cite{W1,W2,HKO} a choice of generators $x_i$ when $G$ is the exceptional Lie group. As in \cite{A}, there is a commutative diagram of subgroup inclusions:
$$\xymatrix{\mathrm{Spin}(9)\ar[r]^{i_0}\ar[d]^{j_0}&\mathrm{Spin}(10)\ar[r]^{i_1}\ar[d]^{j_1}&\mathrm{Spin}(11)\ar[r]^{i_2}\ar[d]^{j_2}&\mathrm{Spin}(15)\ar[d]^{j_3}\\
\F_4\ar[r]^{k_0}&\E_6\ar[r]^{k_1}&\E_7\ar[r]^{k_2}&\E_8}$$
The choice of generators $x_i$ is made through these inclusions. Recall that we have
$$H^*(B\mathrm{Spin}(2n+1))=\Z_{(p)}[p_1,\ldots,p_n],\quad H^*(B\mathrm{Spin}(2n))=\Z_{(p)}[p_1,\dots,p_{n-1},c_n]$$
where $p_i$ and $c_n$ are the Pontrjagin class and the Euler class of the universal bundle respectively. If  a polynomial $P$ is a sum of a polynomial $Q$ and other terms, then we write $P\vartriangleright Q$.

\begin{proposition}
\label{x-E_8}
For $p\ge 7$, generators $x_i$ for $\E_8$ can be chosen such that
\begin{align*}
j_3^*(x_4)=&p_1\\
j_3^*(x_{16})=&12p_4-\frac{18}{5}p_3p_1+p_2^2+\frac{1}{10}p_2p_1^2\\
j_3^*(x_{24})=&60p_6-5p_5p_1-5p_4p_2+3p_3^2-p_3p_2p_1+\frac{5}{36}p_2^3\\
j_3^*(x_{28})\equiv& 480p_7+40p_5p_2-12p_4p_3-p_3p_2^2-3p_4p_2p_1+\frac{24}{5}p_3^2p_1+\frac{11}{36}p_2^3p_1\mod(p_1^2)\\
j_3^*(x_{36})\equiv& 480p_7p_2+72p_6p_3-30p_5p_4-\frac{25}{2}p_5p_2^2+ 9p_4p_3p_2-\frac{18}{5}p_3^3-\frac{1}{4}p_3p_2^3-42p_6p_2p_1+9p_5p_3p_1\\
&-\frac{3}{2}p_4p_2^2p_1+\frac{9}{5}p_3^2p_2p_1+\frac{1}{24}p_2^4p_1\mod(p_1^2)\\
j_3^*(x_{40})\equiv&480p_7p_3+50p_6p_2^2+50p_5^2-10p_5p_3p_2-\frac{25}{2}p_4^2p_2+9p_4p_3^2-\frac{25}{36}p_4p_2^3+\frac{3}{4}p_3^2p_2^2\\
&+\frac{25}{864}p_2^5\mod(p_1)\\
j_3^*(x_{48})\equiv&-200p_7p_5-60p_7p_3p_2+3p_6p_3^2+\frac{25}{9}p_6p_2^3+\frac{25}{3}p_5^2p_2-\frac{5}{2}p_5p_4p_3-\frac{25}{24}p_5p_3p_2^2-\frac{25}{48}p_4^2p_2^2\\
&+p_4p_3^2p_2+\frac{25}{864}p_4p_2^4-\frac{3}{10}p_3^4-\frac{1}{36}p_3^2p_2^3-\frac{25}{62208}p_2^6\mod(p_1)\\
j_3^*(x_{60})\vartriangleright&144p_7p_5p_3-5p_5^3+\frac{3}{2}p_5^2p_3p_2-\frac{89}{1440}p_5p_4^2p_2-\frac{229}{1600}p_5p_4p_3^2-\frac{13}{320}p_5p_4p_2^3-\frac{229}{3840}p_5p_3^2p_2^2\\
&+\frac{29}{13824}p_5p_2^5-\frac{43}{1920}p_4^3p_3+\frac{1357}{38400}p_4^2p_3p_2^2-\frac{59}{3200}p_4p_3^3p_2-\frac{421}{153600}p_4p_3p_2^4+\frac{177}{40000}p_3^5\\
&+\frac{59}{115200}p_3^3p_2^3
\end{align*}
where $j_3^*(x_{40}),j_3^*(x_{60})$ do not include multiples of $p_5p_4p_1,p_7^2p_1$ respectively
\end{proposition}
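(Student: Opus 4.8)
The plan is to compute the generators $x_i$ for $\E_8$ by pulling back along the subgroup inclusion $j_3\colon\Spin(15)\to\E_8$, using the known structure $H^*(B\Spin(15))=\Z_{(p)}[p_1,\ldots,p_7]$. The starting point is that $j_3^*$ is injective in the relevant range at these primes (since $\E_8$ and $\Spin(15)$ share enough rational generators and have no $p$-torsion for $p\ge 7$), so it suffices to pin down each $x_i$ by specifying its image $j_3^*(x_i)$ as a polynomial in the Pontrjagin classes $p_1,\ldots,p_7$. First I would recall from \cite{A,W1,W2} the explicit description of the rational cohomology of $B\E_8$ and the restriction homomorphism to $B\Spin(15)$; the degrees $4,16,24,28,36,40,48,60$ match $2n_k$ for $t(\E_8)=\{2,8,12,14,18,20,24,30\}$, so each target generator lands in a single graded piece of $\Z_{(p)}[p_1,\ldots,p_7]$.

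The main computational engine is the splitting principle: restricting further to a maximal torus, I would express the $p_i$ via elementary symmetric functions in the Chern roots and match the $\E_8$ invariants (which are governed by the Weyl group $W(\E_8)$) against symmetric polynomials in the $\Spin(15)$ variables. In practice one writes down a basis for the degree-$2n_k$ part of the polynomial ring and solves the linear system imposed by the requirement that $j_3^*(x_i)$ be $W(\E_8)$-invariant and normalized to agree with the chosen integral generator. The coefficients $12,-\tfrac{18}{5},\ldots$ appearing in the statement are exactly the solutions of these systems; the normalizations (leading coefficient on the top Pontrjagin class $p_k$ in degree $2n_k$, e.g.\ $60p_6$ in $j_3^*(x_{24})$, $480p_7$ in the degree-$28$ and $36$ formulas) fix the generator up to decomposables, so the freedom in choosing $x_i$ is absorbed into the lower-order terms.

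For the higher generators $x_{28},x_{36},x_{40},x_{48},x_{60}$ I would work modulo the ideals $(p_1^2)$ or $(p_1)$ as indicated in the statement, which is legitimate because for the subsequent applications (the Steenrod-operation criterion of the next section) only these truncated expressions are needed, and working modulo a power of $p_1$ drastically reduces the size of the symmetric-function computation. The stated exclusions — that $j_3^*(x_{40})$ omits multiples of $p_5p_4p_1$ and $j_3^*(x_{60})$ omits multiples of $p_7^2p_1$ — record a further normalization choice that removes the remaining ambiguity among the admissible generators in those degrees.

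The hard part will be managing the sheer scale of the degree-$60$ computation: the monomial basis for the invariants in that degree is large, and verifying $W(\E_8)$-invariance directly is infeasible by hand, so I would instead lean on the integrality constraints coming from the known $\Z_{(p)}$-structure of $H^*(B\E_8)$ together with compatibility under the chain $j_0,j_1,j_2,j_3$ and the embeddings $\Spin(2n+1)\hookrightarrow\Spin(2n+3)$, which propagate lower-degree data upward and cut down the undetermined coefficients. Checking that the resulting rational coefficients actually lie in $\Z_{(p)}$ for all the relevant primes $p\ge 7$ (so that denominators such as $5$, $10$, $36$, $62208$, $153600$ are units) is the essential consistency requirement, and it is precisely this integrality that certifies the formulas define honest generators of $H^*(B\E_8;\Z_{(p)})$ rather than merely rational cohomology classes.
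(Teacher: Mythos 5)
There is a genuine gap: your proposal never actually produces the formulas, and its stated foundation is flawed. The paper's own proof is short precisely because it is a citation plus a normalization argument: all generators except $x_{60}$ were already chosen and their restrictions computed in \cite{HKO}, and the only new content is (a) taking $x_{60}$ by the same method and (b) the two adjustments by decomposables — subtracting a multiple of $x_4x_{36}$ from $x_{40}$ and a multiple of $x_4x_{28}^2$ from $x_{60}$. These adjustments work for a concrete reason your write-up never identifies: $j_3^*(x_4x_{36})=p_1\,j_3^*(x_{36})$ contains $-30p_5p_4p_1$ and $j_3^*(x_4x_{28}^2)=p_1\,j_3^*(x_{28})^2$ contains $(480)^2p_7^2p_1$, both with coefficients that are units in $\Z_{(p)}$ for $p\ge 7$, so the offending monomials can be cancelled without disturbing the other listed terms. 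You correctly read the exclusions as normalization choices, but a proof must exhibit the decomposable classes that realize them; ``the remaining ambiguity is absorbed'' is an assertion, not an argument.

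Two further points would make your route fail as written. First, your starting claim that $j_3^*$ is injective in the relevant range is unjustified and false globally: $\Spin(15)$ has rank $7$ while $\E_8$ has rank $8$, so $j_3^*\colon\Q[x_4,\ldots,x_{60}]\to\Q[p_1,\ldots,p_7]$ has nontrivial kernel (transcendence degree drops from $8$ to $7$); whether the graded pieces up to degree $120$ inject needs verification, and the paper's remark that $a_{p_5^6}$ and $a_{p_5^5p_3p_2}$ are linearly dependent is a warning that exactly this kind of degeneration occurs in degree $120$. Second, the invariant-theoretic ``engine'' is misdescribed: every polynomial in $p_1,\ldots,p_7$ is automatically $W(B_7)$-invariant, so requiring $j_3^*(x_i)$ to be ``$W(\E_8)$-invariant'' imposes no linear conditions in the target ring. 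One must instead compute the $W(\E_8)$-invariants on the rank-$8$ Cartan subalgebra and restrict — which is the computation you yourself declare infeasible by hand, and which your appeal to ``integrality constraints'' and compatibility along $j_0,j_1,j_2,j_3$ does not replace (integrality of the coefficients is a consequence of a correct choice of generators, not a mechanism that determines them). The determination of these restrictions is genuinely the content of \cite{HKO} (following Watanabe's method via representations and Chern classes), and a correct proof here either redoes that computation or cites it, as the paper does.
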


\begin{proof}
The choice of $x_i$ except for $i=60$ is made in \cite{HKO}, where we subtract a multiple of $x_4x_{36}$ from $x_{40}$ if necessary so that $j^*(x_{40})$ does not include a multiple of $p_5p_4p_1$, and we can take $x_{60}$ quite similarly. By subtracting a multiple of $x_4x_{28}^2$ if necessary, we can set that $j_3^*(x_{60})$ does not include a multiple of $p_7^2p_1$, completing the proof.
\end{proof}

\begin{proposition}
\label{x-E_7}
For $p\ge 7$, generators $x_i$ for $\E_7$ can be chosen such that:
\begin{enumerate}
\item $k_2^*(x_i)=x_i\quad(i=4,16,24,28,36)\quad j_2^*(x_{12})=-6p_3+p_2p_1\quad j_2^*(x_{20})=p_5$;
\item modulo $\widetilde{H}^*(B\E_7)^3$
\begin{alignat*}{3}
k_2^*(x_{40})\equiv&\frac{1}{24}x_{12}x_{28}+\frac{5}{24}x_{16}x_{24}+50x_{20}^2\quad&k_2^*(x_{48})\equiv&-\frac{1}{72}x_{12}x_{36}+\frac{5}{24}x_{20}x_{28}-\frac{1}{48}x_{24}^2\\
k_2^*(x_{60})\equiv&-\frac{131}{144000}x_{24}x_{36}.
\end{alignat*}
\end{enumerate}
\end{proposition}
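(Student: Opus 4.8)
The plan is to deduce Proposition \ref{x-E_7} from the computation for $\E_8$ in Proposition \ref{x-E_8} through the commutative square of inclusions. Applying the classifying space functor to the square relating $k_2\colon\E_7\to\E_8$ and $i_2\colon\Spin(11)\to\Spin(15)$ gives the identity
$$j_2^*\circ k_2^*=i_2^*\circ j_3^*\colon H^*(B\E_8)\to H^*(B\Spin(11)),$$
and since $\Spin(11)$ has rank $5$, the map $i_2^*$ fixes $p_1,\dots,p_5$ and kills $p_6,p_7$. Thus for any generator $x$ of $H^*(B\E_8)$ the restriction $j_2^*(k_2^*(x))$ is read off from the formula for $j_3^*(x)$ in Proposition \ref{x-E_8} simply by setting $p_6=p_7=0$. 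First I would define the $\E_7$ generators in degrees $4,16,24,28,36$ to be $x_i:=k_2^*(x_i)$, and introduce two further generators $x_{12},x_{20}$ in the degrees in which $\E_8$ has no generator. That these seven classes form a polynomial basis of $H^*(B\E_7)$ is checked on indecomposables: in degrees $4,12,16,20$ the classes $x_4,x_{12},x_{16},x_{20}$ map under $j_2^*$ to elements with nonzero indecomposable parts $p_1,-6p_3,12p_4,p_5$, while the generator property in degrees $24,28,36$ (where $j_2^*$ cannot see it, the restrictions being decomposable in $\Spin(11)$) is inherited from $\E_8$ via the known ring structure in \cite{W1,W2,HKO}.

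For part (1), the identities $k_2^*(x_i)=x_i$ for $i=4,16,24,28,36$ hold by this choice, and the two remaining formulas $j_2^*(x_{12})=-6p_3+p_2p_1$ and $j_2^*(x_{20})=p_5$ are the defining restrictions of the new generators, computed for the inclusion $\Spin(11)\to\E_7$ as in \cite{W1,W2,HKO}.

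For part (2), the point is to determine, modulo $\widetilde{H}^*(B\E_7)^3$, the quadratic parts of $k_2^*(x_{40}),k_2^*(x_{48}),k_2^*(x_{60})$. I would compute $i_2^*(j_3^*(x_i))$ for $i=40,48,60$ by setting $p_6=p_7=0$ in Proposition \ref{x-E_8}, and match this against $j_2^*$ of a candidate quadratic combination of $\E_7$ generators. The decisive device is to filter $H^*(B\Spin(11))$ by Pontrjagin length: $x_4,x_{12},x_{16},x_{20}$ restrict to classes of length $1$, whereas $x_{24},x_{28},x_{36}$ restrict to classes of length $2$, and crucially $p_2$ never occurs as a length-$1$ restriction (there is no degree-$8$ indecomposable). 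Reading off the lowest-length part then isolates coefficients: the length-$2$ term $50p_5^2$ of $i_2^*(j_3^*(x_{40}))$ can only come from $x_{20}^2$, giving $50$; the length-$3$ monomials $p_5p_3p_2$ and $p_4^2p_2$, both unreachable from length-$1$ generators alone because they contain $p_2$, pin down the coefficients of $x_{12}x_{28}$ and $x_{16}x_{24}$ as $\tfrac1{24}$ and $\tfrac5{24}$. The same bookkeeping, using a $p_2$-containing signature monomial at the relevant length, produces the coefficients in degrees $48$ and $60$.

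The main obstacle is exactly this isolation, because $j_2^*$ does not respect the decomposable filtration: a quadratic monomial in the $\E_7$ generators can restrict to a Pontrjagin polynomial of length $3$ or $4$, and cubic $\E_7$-monomials (the very terms discarded modulo $\widetilde{H}^*(B\E_7)^3$) compete on those same monomials — for example $x_{16}^2x_{28}$ and $x_{24}x_{36}$ both hit $p_4^2p_5p_2$ at length $4$ for $x_{60}$, and $x_{12}x_{16}x_{20}$ competes with $x_{12}x_{36}$ on $p_3p_4p_5$ for $x_{48}$. The $p_2$-signature trick resolves the coefficients that are detected at minimal length by a single length-$2$ generator, but for the top class $x_{60}$ (and the $x_{12}x_{36}$ term of $x_{48}$) I expect no clean signature to survive, so one must either solve the full linear system over $H^*(B\Spin(11))$ incorporating the unknown cubic coefficients, or equivalently characterize $\ker\!\big(H^{60}(B\E_8)\to H^{60}(B\E_7)/\widetilde{H}^*(B\E_7)^3\big)$ and transport the computation back through $\E_8$, where $x_{24}$ carries the length-$1$ restriction $60p_6$. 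Finally I would note the bookkeeping point that, because $j_3^*(x_{40})$ is recorded only modulo $(p_1)$ and $j_2^*(x_4)=p_1$, the coefficient of $x_4x_{36}$ in $k_2^*(x_{40})$ is determined only modulo $(x_4)$; it is irrelevant to the later Steenrod-operation detection, and the $\E_8$-normalizations forbidding $p_5p_4p_1$ in $j_3^*(x_{40})$ and $p_7^2p_1$ in $j_3^*(x_{60})$ are precisely what make the remaining quadratic coefficients well defined.
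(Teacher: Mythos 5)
Your proposal is correct and takes essentially the same route as the paper, whose entire proof is that (1) is quoted from \cite{HKO} and (2) follows from Proposition \ref{x-E_8} and (1), i.e.\ precisely your restriction along $j_2^*\circ k_2^*=i_2^*\circ j_3^*$ with $i_2^*$ killing $p_6,p_7$, followed by matching quadratic parts modulo $\widetilde{H}^*(B\E_7)^3$. Your additional bookkeeping --- isolating coefficients via $p_2$-containing monomials, tracking the competing cubic terms such as $x_{12}x_{16}x_{20}$ for $x_{48}$ and $x_{16}^2x_{28}$ for $x_{60}$, and flagging the $x_4x_{36}$ ambiguity caused by knowing $j_3^*(x_{40})$ only modulo $(p_1)$ --- is a faithful and correct expansion of what the paper compresses into a single sentence.
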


\begin{proof}
(1) is proved in \cite{HKO}, and (2) is obtained by Proposition \ref{x-E_8} and (1).
\end{proof}

\begin{proposition}
\label{x-E_6}
For $p\ge 5$, generators $x_i$ for $\E_6$ can be chosen such that
\begin{alignat*}{4}
j_1^*(x_4)=&p_1&j_1^*(x_{10})=&c_5\\
j_1^*(x_{12})=&-6p_3+p_2p_1\quad&j_1^*(x_{16})=&12p_4-3p_3p_1+p_2^2\\
j_1^*(x_{18})=&p_2c_5&j_1^*(x_{24})=&-72p_4p_2+27p_4p_1^2+27p_3^2-9p_3p_2p_1+2p_2^3.
\end{alignat*}
\end{proposition}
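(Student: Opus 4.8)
The plan is to exploit the two squares of the displayed diagram that involve $\E_6$: the square with vertical maps $j_0,j_1$ and horizontal maps $i_0,k_0$, and the square with vertical maps $j_1,j_2$ and horizontal maps $i_1,k_1$. On cohomology these yield
$$i_0^*\circ j_1^*=j_0^*\circ k_0^*\quad\text{and}\quad i_1^*\circ j_2^*=j_1^*\circ k_1^*.$$
Since $\E_6$ has no $p$-torsion for $p\ge 5$, the ring $H^*(B\E_6)$ is polynomial on generators of degrees $4,10,12,16,18,24$. As $\F_4$ is a $p$-local retract of $\E_6$ by the mod $p$ decomposition, $k_0^*$ is surjective, so I may choose $x_4,x_{12},x_{16},x_{24}$ so that $k_0^*$ carries them to polynomial generators of $H^*(B\F_4)$, while $x_{10},x_{18}$ lie in $\ker k_0^*$, since $H^{10}(B\F_4)=H^{18}(B\F_4)=0$ for degree reasons.

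First I would pin down $x_4,x_{12},x_{16},x_{24}$. The restriction $i_0^*\colon H^*(B\Spin(10))\to H^*(B\Spin(9))$ fixes $p_1,\dots,p_4$ and sends the Euler class $c_5$ to $0$, because the universal rank-$10$ bundle restricts to a rank-$9$ bundle plus a trivial line and so its Euler class vanishes. Hence the $c_5$-free part of $j_1^*(x_i)$ equals $j_0^*k_0^*(x_i)$, i.e.\ the $\F_4$-restriction computed in \cite{HKO}, whose coefficients are integral. For $i=4,12,16$ no $c_5$-term can occur by degree, so $j_1^*(x_i)$ is exactly this formula. For $i=24$ the only $c_5$-monomial of degree $24$ is $c_5^2p_1$; since $j_1^*(x_4x_{10}^2)=p_1c_5^2$, I subtract a suitable multiple of $x_4x_{10}^2$ to normalize its coefficient to $0$, leaving the stated polynomial. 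For $p\ge 7$ these may be cross-checked against the $\E_7$ data through the second identity, using $j_2^*(x_{12})=-6p_3+p_2p_1$, Proposition \ref{x-E_7}, and $i_1^*(p_5)=c_5^2$.

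Next I would treat the new generators $x_{10},x_{18}$. Since $k_0^*(x_{10})=k_0^*(x_{18})=0$, the first identity forces $j_1^*(x_{10})$ and $j_1^*(x_{18})$ into $\ker i_0^*=(c_5)$. By degree, $j_1^*(x_{10})=\lambda c_5$ and $j_1^*(x_{18})=a\,c_5p_2+b\,c_5p_1^2$ for some $\lambda,a,b\in\Z_{(p)}$. Rescaling $x_{10}$ and subtracting a multiple of $x_4^2x_{10}$ from $x_{18}$, which restricts to $p_1^2c_5$, then yields $j_1^*(x_{10})=c_5$ and $j_1^*(x_{18})=p_2c_5$, provided $\lambda$ and $a$ are units.

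The main obstacle is precisely this last proviso: one must show that $\lambda$ and $a$ are units in $\Z_{(p)}$, equivalently that $j_1^*$ is nontrivial in degrees $10$ and $18$. This is the genuinely $\E_6$-specific input, reflecting how the half-spin representation of $\Spin(10)$ enters the $27$-dimensional representation of $\E_6$, and I would extract it from Watanabe's explicit computations \cite{W1,W2} as recalled in \cite{HKO}. Integrality of every coefficient, which is what lets the formulas hold down to $p=5$, is then automatic, since the $c_5$-free parts are imported from the integral $\F_4$-formulas rather than from the $\E_8$-restrictions of Proposition \ref{x-E_8}, whose denominators are divisible by $5$.
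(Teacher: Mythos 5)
Your proposal is correct and follows essentially the same route as the paper: restrict along the square $j_0\circ$--$/k_0$ to import Watanabe's integral $\F_4$-formulas for $x_4,x_{12},x_{16},x_{24}$ using $i_0^*(p_i)=p_i$, $i_0^*(c_5)=0$, and defer the degree $10$ and $18$ generators to the argument of \cite{HKO} (ultimately Watanabe's computations), which the paper notes works for all $p\ge 5$. You are in fact slightly more explicit than the paper at two points it compresses into ``a degree reason'' --- killing the possible $c_5^2p_1$ term in $j_1^*(x_{24})$ by subtracting a multiple of $x_4x_{10}^2$, and normalizing $x_{18}$ by $x_4^2x_{10}$ --- and you correctly isolate the unit-coefficient claim in degrees $10$ and $18$ as the one genuinely $\E_6$-specific input.
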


\begin{proof}
The argument on the choice of  $x_i$ ($i=10,18$) for $\E_6$ in \cite{HKO} works also for $p\ge 5$, so we can choose $x_i$ ($i=10,18$) for $\E_6$ as in the statement. On the other hand, Watanabe \cite{W1} chooses generators $x_i$ for $\F_4$ through the inclusion $j_0\colon\mathrm{Spin}(9)\to\F_4$. Then since $i_0^*(p_i)=p_i$ ($i=1,2,3,4$) and $i_0^*(c_5)=0$, a degree reason shows that the choice of $x_i$ for $\F_4$ implies the choice of $x_i$ for $\E_6$  ($i=4,12,16,24$). 
\end{proof}

\begin{remark}
We choose generators $x_i$ for $\E_6$ independently from $\E_7,\E_8$ since we have to consider the primes $5,7$. 
\end{remark}


\section{Steenrod operations and Samelson products}

Suppose that $(G,p)$ is in Table \ref{(G,p)} except for $\mathrm{Spin}(4n)$, where we exclude $\mathrm{Spin}(4n)$ to make $t(G)$ consist of distinct integers. Define $y_{2j-1}\in H^{2j-1}(A_i)$ by $(\bar{\epsilon}_i')^*(x_{2j})=\Sigma y_{2j-1}$ for $j\equiv i\mod (p-1)$, where $f'\colon\Sigma X\to Y$ denotes the adjoint of a map $f\colon X\to\Omega Y$. Then $y_{2j-1}$ is non-trivial and satisfies
$$(\bar{\epsilon}_i')^*(x_{2k})=\begin{cases}\Sigma y_{2k-1}&k\equiv i\mod(p-1)\\0&k\not\equiv i\mod(p-1)\end{cases}$$
since $t(G)$ consists of distinct integers. We detect non-triviality of the Samelson products $\langle\bar{\epsilon}_i,\bar{\epsilon}_j\rangle$ by the following criterion. (cf. \cite[Proof of Theorem 1.1]{KO}, \cite{KT})

\begin{proposition}
\label{P^1}
Suppose that $(G,p)$ is in Table \ref{(G,p)} except for $\mathrm{Spin}(4n)$ and that for $i,j\in t_p(G)$, there is $k\in t_p(G)$ such that $i+j>k$ and $\P^{r_k}x_{2k}$ includes $\lambda x_{2i+2s_i(p-1)}x_{2j+2s_j(p-1)}$ with $\lambda\ne 0,s_i\le\min\{r_i-1,r_k-1\},s_j\le\min\{r_j-1,r_k-1\}$. Then $\langle\bar{\epsilon}_i,\bar{\epsilon}_j\rangle$ is non-trivial.
\end{proposition}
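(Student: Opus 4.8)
The plan is to recast non-triviality of $\langle\bar\epsilon_i,\bar\epsilon_j\rangle$ as the non-existence of a certain extension, and then to obstruct that extension using $\P^{r_k}$ and naturality. First I would invoke the standard correspondence between Samelson products in $G\simeq\Omega BG$ and Whitehead products in $BG$: the product $\langle\bar\epsilon_i,\bar\epsilon_j\rangle$ is trivial if and only if the Whitehead product $[\bar\epsilon_i',\bar\epsilon_j']$ in $BG$ is trivial, and the latter holds if and only if $\bar\epsilon_i'\vee\bar\epsilon_j'\colon\Sigma A_i\vee\Sigma A_j\to BG$ extends over $\Sigma A_i\times\Sigma A_j$ to some map $\bar f$. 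So I would suppose such an extension $\bar f$ exists and derive a contradiction; the point of passing to $\bar f$ is that $\bar f^*$ is a ring homomorphism commuting with $\P^{r_k}$.

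By Künneth $H^*(\Sigma A_i\times\Sigma A_j)\cong H^*(\Sigma A_i)\otimes H^*(\Sigma A_j)$, and since each $\Sigma A_i$ is a suspension all cup products of positive-degree classes within a single tensor factor vanish. Hence every class is a sum of pieces $1\otimes 1$, $u\otimes 1$, $1\otimes v$, and $u\otimes v$ with $u,v$ reduced; I call the last type the $(1,1)$-part. The factor $H^*(\Sigma A_i)$ is spanned by the $\Sigma y_{2a-1}$ with $a\equiv i\pmod{p-1}$ and $i\le a\le i+(r_i-1)(p-1)$, and likewise for $j$. The key observation is that $\bar f^*(x_{2k})$ has vanishing $(1,1)$-part: a $(1,1)$-class in degree $2k$ would be $\Sigma y_{2a'-1}\otimes\Sigma y_{2b'-1}$ with $a'+b'=k$, $a'\ge i$, $b'\ge j$, which is impossible since $i+j>k$. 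Thus $\bar f^*(x_{2k})$ lies in $(H^*(\Sigma A_i)\otimes 1)\oplus(1\otimes H^*(\Sigma A_j))$, and the Cartan formula together with $\P^t(1)=0$ for $t>0$ gives $\P^{r_k}(u\otimes 1)=\P^{r_k}u\otimes 1$ and $\P^{r_k}(1\otimes v)=1\otimes\P^{r_k}v$; therefore $\bar f^*(\P^{r_k}x_{2k})=\P^{r_k}\bigl(\bar f^*(x_{2k})\bigr)$ also has vanishing $(1,1)$-part.

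Against this I would compute the $(1,1)$-part of $\bar f^*(\P^{r_k}x_{2k})$ directly from the hypothesis $\P^{r_k}x_{2k}\vartriangleright\lambda\,x_{2a}x_{2b}$, where $a=i+s_i(p-1)$, $b=j+s_j(p-1)$ and $\lambda\ne 0$. Using $\bar f^*(x_{2a})=\Sigma y_{2a-1}\otimes 1+(\text{$(1,1)$-terms})$ and $\bar f^*(x_{2b})=1\otimes\Sigma y_{2b-1}+(\text{$(1,1)$-terms})$, where the opposite cross terms $(\bar\epsilon_j')^*(x_{2a})$ and $(\bar\epsilon_i')^*(x_{2b})$ vanish by the congruences, and noting that any product involving a $(1,1)$-term or two classes from the same factor lands in $\widetilde H^{\ge 2}$ of one factor and hence vanishes, the product collapses to $\bar f^*(x_{2a}x_{2b})\vartriangleright\pm\,\Sigma y_{2a-1}\otimes\Sigma y_{2b-1}$, a non-zero basis element (the hypotheses give in particular $s_i\le r_i-1$ and $s_j\le r_j-1$, so these $y$-classes exist). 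It then remains to see that this basis element receives no other contribution: monomials with at least three generators exceed bidegree $(1,1)$ and drop out, other quadratic monomials produce different basis elements, and there is no indecomposable term at all, since $\P^{r_k}x_{2k}$ sits in degree $2(k+r_k(p-1))$ whereas the exponents of $G$ congruent to $k$ modulo $p-1$ form the contiguous block $k,k+(p-1),\dots,k+(r_k-1)(p-1)$, so $k+r_k(p-1)\notin t(G)$ and no generator of that degree exists.

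The main obstacle is precisely this last accounting: the argument hinges on showing that the coefficient $\pm\lambda$ of $\Sigma y_{2a-1}\otimes\Sigma y_{2b-1}$ cannot be cancelled by other terms, and the absence of an indecomposable term, forced by the contiguity of exponents within each residue class modulo $p-1$ (equivalently, by there being a single factor $B_k$ per residue class), is what secures this. Two edge cases need routine separate care: when $k\equiv i$ or $k\equiv j\pmod{p-1}$ the class $\bar f^*(x_{2k})$ acquires non-zero $(1,0)$- or $(0,1)$-parts, which however still produce no $(1,1)$-part after $\P^{r_k}$; and when $i\equiv j\pmod{p-1}$, which forces $i=j$, one argues inside $H^*(\Sigma A_i)^{\otimes 2}$ and uses that the relevant symmetric coefficient stays non-zero because $p$ is odd. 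Comparing the two evaluations of the $(1,1)$-part of $\bar f^*(\P^{r_k}x_{2k})$, namely zero by naturality and $\pm\lambda\ne 0$ by the hypothesis, yields the contradiction and proves that $\langle\bar\epsilon_i,\bar\epsilon_j\rangle$ is non-trivial.
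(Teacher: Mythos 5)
Your proposal is correct and is essentially the paper's own proof: both convert triviality of $\langle\bar{\epsilon}_i,\bar{\epsilon}_j\rangle$ into an extension $\Sigma A_i\times\Sigma A_j\to BG$ of $\bar{\epsilon}_i'\vee\bar{\epsilon}_j'$ via the Whitehead product, and then contradict naturality of $\P^{r_k}$ using the quadratic term $\lambda x_{2i+2s_i(p-1)}x_{2j+2s_j(p-1)}$, the vanishing of the linear part of $\P^{r_k}x_{2k}$, and the condition $i+j>k$. The only difference is technical bookkeeping: the paper restricts the extension to the skeleta $\Sigma A_i^{(2i-1+2(r_k-1)(p-1))}\times\Sigma A_j^{(2j-1+2(r_k-1)(p-1))}$ (this is where $s_i,s_j\le r_k-1$ enters) so that $\P^{r_k}\bar{\mu}^*(x_{2k})$ vanishes outright, whereas you work on the full product and compare only the $(1,1)$-components using the Cartan formula and triviality of cup products in a suspension --- an equally valid route that makes explicit the cancellation and contiguity points the paper compresses into ``for a degree reason.''
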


\begin{proof}
Assume that $\langle\bar{\epsilon}_i,\bar{\epsilon}_j\rangle$ is trivial. Then by the adjointness of Samelson products and Whitehead products, the Whitehead product $[\bar{\epsilon}_i',\bar{\epsilon}_j']$ is trivial, implying 
$\bar{\epsilon}_i'\vee\bar{\epsilon}_j'\colon\Sigma A_i\vee\Sigma A_j\to BG$ extends to $\mu\colon\Sigma A_i\times\Sigma A_j\to BG$ up to homotopy. Let $\bar{\mu}$ be the restriction of $\mu$ to $\Sigma A^{{(2i-1+2(r_k-1)(p-1))}}\times\Sigma A^{{(2j-1+2(r_k-1)(p-1))}}$. Then we have $\mathcal{P}^{r_k}\bar{\mu}^*(x_{2k})=0$. On the other hand, we have
$$\mathcal{P}^{r_k}\bar{\mu}^*(x_{2k})=\bar{\mu}^*(\mathcal{P}^{r_k}x_{2k})=\bar{\mu}^*(\lambda x_{2i+2s_i(p-1)}x_{2j+2s_j(p-1)})=\lambda\Sigma y_{2i+2s_i(p-1)}\otimes\Sigma y_{2j+2s_j(p-1)}$$
since $\P^{r_k}x_{2k}$ has no linear part for a degree reason. This is a contradiction, so the Samelson product $\langle\bar{\epsilon}_i,\bar{\epsilon}_j\rangle$ is non-trivial.
\end{proof}

In order to apply Proposition \ref{P^1}, we calculate the linear and the quadratic parts of $\P^1x_{2k}$.

\begin{lemma}
\label{P^1-E_6}
The linear and the quadratic parts of $\P^1x_i$ for $\E_6$ are given by:
\renewcommand{\arraystretch}{1.2}
\begin{table}[H]
\centering
\begin{tabular}{lllllll}
&$i=4$&$i=10$&$i=12$&$i=16$&$i=18$&$i=24$\\
$p=5$&$-x_{12}$&$-x_{18}$&$0$&$x_{24}$&$-x_{10}x_{16}$&$x_{16}^2$\\
$p=7$&$-2x_{16}+5x_4x_{12}$&$x_4x_{18}+3x_{10}x_{12}$&$-2x_{24}-4x_{12}^2$&$-3x_{12}x_{16}$&$-x_{12}x_{18}$&$-2x_{18}^2$\\
$p=11$&$-2x_{24}-2x_{12}^2$&$-x_{12}x_{18}$&$-4x_{16}^2$&$6x_{18}^2$&$0$&$0$
\end{tabular}
\end{table}
\end{lemma}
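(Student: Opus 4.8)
The plan is to compute the effect of the Steenrod operation $\mathcal{P}^1$ on each generator $x_i$ and then extract its linear and quadratic parts. Since the generators $x_i$ for $\E_6$ have been pinned down in Proposition \ref{x-E_6} through the inclusion $j_1\colon\mathrm{Spin}(10)\to\E_6$, the natural strategy is to carry out the computation downstairs in $H^*(B\mathrm{Spin}(10))=\Z_{(p)}[p_1,p_2,p_3,p_4,c_5]$, where the action of $\mathcal{P}^1$ on the Pontrjagin classes $p_i$ and the Euler class $c_5$ is governed by the classical Wu-type formulas. Because $j_1^*$ is injective in the relevant range (the images $j_1^*(x_i)$ are algebraically independent), it suffices to identify, for each $k$, which monomial in the $x$'s maps under $j_1^*$ to $\mathcal{P}^1 j_1^*(x_i)$; matching degrees and leading coefficients then reads off the answer in $H^*(B\E_6)$.

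First I would record the action of $\mathcal{P}^1$ on the generators of $H^*(B\mathrm{Spin}(10))$. For the prime $p$ one has $\mathcal{P}^1 p_j \equiv \binom{2j}{?}\,(\text{terms})$ determined by the total Steenrod power acting on the Chern roots $t_\alpha$, where $\mathcal{P}^1 t_\alpha = t_\alpha^{p}$ in the appropriate grading; concretely $\mathcal{P}^1$ raises the formal degree and produces a sum of Pontrjagin monomials whose coefficients are polynomials in the elementary symmetric functions. The same applies to $c_5$, for which $\mathcal{P}^1 c_5$ is computed from the product formula for the Euler class. Having these in hand, I would apply $\mathcal{P}^1$ to each of $j_1^*(x_4)=p_1$, $j_1^*(x_{10})=c_5$, $j_1^*(x_{12})=-6p_3+p_2p_1$, $j_1^*(x_{16})=12p_4-3p_3p_1+p_2^2$, $j_1^*(x_{18})=p_2c_5$, and $j_1^*(x_{24})$, using the Cartan formula to expand products, and then collect terms by total degree.

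The key reduction step is that Proposition \ref{P^1} only needs the \emph{linear and quadratic} parts of $\mathcal{P}^1 x_i$, so after computing the full polynomial $\mathcal{P}^1 j_1^*(x_i)$ I would discard all cubic-and-higher monomials and rewrite the surviving degree-one and degree-two pieces back in terms of the $x$'s. For the linear part this means finding the single generator $x_{2m}$ with $j_1^*(x_{2m})$ equal (up to lower filtration) to the primitive part of $\mathcal{P}^1 j_1^*(x_i)$; for the quadratic part it means expressing the degree-two symmetric polynomial as a $\Z_{(p)}$-combination of products $x_{2a}x_{2b}$ via the inverse of the (triangular) change-of-basis matrix between the $j_1^*(x_i)$ and the Pontrjagin/Euler monomials. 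The computation must be run separately at $p=5,7,11$ because the target degree $i+2(p-1)$ of $\mathcal{P}^1$ shifts with $p$, so which $x_{2m}$ or product $x_{2a}x_{2b}$ actually lands in the table changes with the prime — this is exactly why the three rows of the table differ.

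The main obstacle is bookkeeping rather than conceptual: the Euler class $c_5$ does not behave symmetrically like a Pontrjagin class, so $\mathcal{P}^1 c_5$ and its products with the $p_j$ must be handled with care, and the change-of-basis from Pontrjagin/Euler monomials back to monomials in the chosen $x_i$ involves nontrivial rational coefficients (e.g. the $-6$, $12$, $27$ appearing in Proposition \ref{x-E_6}) that must be inverted exactly to get the stated integer-or-mod-$p$ coefficients. One must also confirm that each entry is reduced modulo $p$ correctly, since several coefficients (for instance the $-4$, $6$ at $p=11$) are only meaningful mod $p$; I expect the genuinely delicate checks to be the vanishing entries such as $\mathcal{P}^1 x_{12}=0$ at $p=5$ and the zeros at $p=11$, where one must verify that the $p$-divisible coefficients cause the relevant linear and quadratic contributions to drop out entirely.
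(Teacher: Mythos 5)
Your overall strategy is the same as the paper's: the paper also computes $\P^1 j_1^*(x_i)$ in $H^*(B\mathrm{Spin}(10);\Z/p)$ via the mod $p$ Wu formula of Shay \cite{S} (with $p_5=c_5^2$), and then identifies $\P^1 x_i$ by degree considerations. However, two of your steps are flawed as written. First, your justification for matching back in $H^*(B\E_6)$ — ``$j_1^*$ is injective in the relevant range (the images $j_1^*(x_i)$ are algebraically independent)'' — cannot be right: $\E_6$ has rank $6$ while $\mathrm{Spin}(10)$ has rank $5$, so the six elements $j_1^*(x_4),\ldots,j_1^*(x_{24})$ of $\Z_{(p)}[p_1,\ldots,p_4,c_5]$ are necessarily algebraically dependent and $\ker j_1^*$ is a nonzero (principal) ideal. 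The computation is rescued only because the generating relation sits in degree beyond the degrees $i+2(p-1)\le 44$ actually needed (eliminating $p_2,p_3,p_4$ using $j_1^*(x_{18})=p_2c_5$ places it in degree $54$), so graded injectivity holds in the range used; this needs to be said, or replaced by the paper's finer monomial-tracking argument.

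Second, and more seriously, the reduction step ``discard all cubic-and-higher monomials and invert the (triangular) change-of-basis matrix'' would fail on exactly the entries involving $x_{10}$ and $x_{18}$, because $j_1^*$ does not respect word length: $x_{18}^2\mapsto p_2^2c_5^2$ is a length-four monomial in $p_1,\ldots,p_4,c_5$, and the \emph{cubic} monomial $x_{16}x_{10}^2\mapsto 12p_4c_5^2-3p_3p_1c_5^2+p_2^2c_5^2$ contributes to the same monomial $p_2^2c_5^2$. Concretely, at $p=11$ the degree-$36$ part of $H^*(B\mathrm{Spin}(10))$ contains no length-two monomials at all, so matching quadratic $x$-parts against quadratic Pontrjagin--Euler parts (or truncating cubics first) returns $0$ for the quadratic part of $\P^1x_{16}$, whereas the correct entry is $6x_{18}^2$. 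The paper instead keeps the cubic term and solves $j_1^*(\P^1x_{16})\vartriangleright 7p_5p_4+2p_5p_2^2$ for both unknowns simultaneously, the $p_5p_4$ term isolating the cubic coefficient, yielding $\P^1x_{16}\vartriangleright 6x_{18}^2+7x_{16}x_{10}^2$. To repair your argument you must solve the full linear system over all $x$-monomials of the given total degree (in particular the cubic ones divisible by $x_{10}^2$ or $x_{10}x_{18}$) before extracting the linear and quadratic parts, and you should replace the placeholder Wu formula by Shay's explicit one; with those changes your computation reproduces the table.
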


\begin{proof}
Recall from \cite{S}  that there is the mod $p$ Wu formula
\begin{multline*}
\P^1p_n=\sum_{i_1+2i_2+\cdots+5i_5=n+\frac{p-1}{2}}(-1)^{i_1+\cdots+i_5+\frac{p+1}{2}}\frac{(i_1+\cdots+i_5-1)!}{i_1!\cdots i_5!}\\
\times\left(2n-1-\frac{\sum_{j=1}^{n-1}(2n+p-1-2j)i_j}{i_1+\cdots+i_5-1}\right)p_1^{i_1}\cdots p_5^{i_5}
\end{multline*}
in $H^*(B\mathrm{Spin}(10);\Z/p)$, where $p_5=c_5^2$. Then by Proposition \ref{x-E_6} we have
$$j_1^*(\P^1x_{16})=\P^1j^*(x_{16})\vartriangleright 7p_5p_4+2p_5p_2^2,$$
so for a degree reason, we must have $\P^1x_{16}\vartriangleright 6x_{18}^2+7x_{16}x_{10}^2$. The remaining calculation is done in the same way.
\end{proof}

\begin{lemma}
\label{P^1-E_8}
The linear and the quadratic parts of $\P^1x_i$ for $\E_8$ are given by:
\renewcommand{\arraystretch}{1.2}
\begin{table}[H]
\centering
\begin{tabular}{lllll}
&$i=4$&$i=16$&$i=24$&$i=28$\\
&$i=36$&$i=40$&$i=48$&$i=60$\\
$p=11$&$2x_{24}$&$6x_{36}$&$0$&$3x_{48}$\\
&$3x_{16}x_{40}+9x_{28}^2$&$9x_{60}$&$5x_{28}x_{40}$&$5x_{40}^2$\\
$p=13$&$-x_{28}+8x_4x_{24}$&$8x_{40}-2x_4x_{36}+4x_{16}x_{24}$&$4x_{48}+5x_{24}^2$&$5x_{24}x_{28}$\\
&$5x_{60}+2x_{24}x_{36}$&$9x_4x_{60}-x_{16}x_{48}$&$-x_{24}x_{48}-x_{36}^2$&$8x_{24}x_{60}+7x_{36}x_{48}$\\
&&$+x_{24}x_{40}+4x_{28}x_{36}$\\
$p=17$&$4x_{36}$&$13x_{48}+2x_{24}^2$&$11x_{16}x_{40}+7x_{28}^2$&$12x_{60}+5x_{24}x_{36}$\\
&$0$&$-x_{24}x_{48}+10x_{36}^2$&$13x_{40}^2$&$0$\\
$p=19$&$4x_{40}+11x_4x_{36}$&$10x_4x_{48}+11x_{16}x_{36}$&$10x_{60}+x_{24}x_{36}$&$-x_4x_{60}+10x_{16}x_{48}$\\
&$+9x_{16}x_{24}$&$+17x_{24}x_{28}$&&$-3x_{24}x_{40}-4x_{28}x_{36}$\\
&$4x_{24}x_{48}+4x_{36}^2$&$5x_{16}x_{60}+x_{36}x_{40}$&$11x_{24}x_{60}+9x_{36}x_{48}$&$-3x_{36}x_{60}-2x_{48}^2$\\
$p=23$ &$10x_{48}+x_{24}^2$&$x_{60}+5x_{24}x_{36}$&$-9x_{28}x_{40}$&$8x_{24}x_{48}-10x_{36}^2$\\
&$13x_{40}^2$&$3x_{24}x_{60}-8x_{36}x_{48}$&$0$&$0$\\
$p=29$&$-2x_{60}-x_{24}x_{36}$&$-x_{24}x_{48}-5x_{36}^2$&$11x_{40}^2$&$17x_{24}x_{60}-5x_{36}x_{48}$\\
&$0$&$14x_{36}x_{60}+13x_{48}^2$&$0$&$0$
\end{tabular}
\end{table}
\end{lemma}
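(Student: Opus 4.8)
The plan is to run the same computation as in Lemma \ref{P^1-E_6}, but through the embedding $j_3\colon\Spin(15)\to\E_8$ and with the generators recorded in Proposition \ref{x-E_8}. By naturality of $\P^1$ and multiplicativity of $j_3^*$ we have $j_3^*(\P^1 x_{2k})=\P^1 j_3^*(x_{2k})$, and the right-hand side is computed by feeding the polynomial $j_3^*(x_{2k})\in\Z/p[p_1,\dots,p_7]$ through the Cartan formula together with the mod $p$ Wu formula for $\P^1 p_n$, now read in $H^*(B\Spin(15);\Z/p)=\Z/p[p_1,\dots,p_7]$ (so all of $p_1,\dots,p_7$ are available and there is no Euler class to track). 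For each of the eight generators $x_4,x_{16},x_{24},x_{28},x_{36},x_{40},x_{48},x_{60}$ and each prime $p\in\{11,13,17,19,23,29\}$ this yields an explicit element of $\Z/p[p_1,\dots,p_7]$ in degree $2k+2(p-1)$.

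To recover $\P^1 x_{2k}$ I would first list, by a degree count, the candidate monomials in that degree: the single generator $x_{2k+2(p-1)}$ when it exists, and the products $x_{2a}x_{2b}$ with $a+b=k+p-1$ and $a,b\in t(\E_8)$; cubic and higher monomials are discarded, just as $7x_{16}x_{10}^2$ is discarded in the $\E_6$ case. Applying $j_3^*$ to each candidate gives a known polynomial in the $p_n$, and I would read off the coefficients by comparing with the Wu-formula output, for instance by matching distinctive leading Pontrjagin monomials of the $j_3^*(x_{2m})$ under a fixed monomial order. The step that makes this legitimate is the ``degree reason'': although $\ker j_3^*$ is nonzero because $\mathrm{rank}\,\E_8=8>7=\mathrm{rank}\,\Spin(15)$, one checks prime by prime that the $j_3^*$-images of the linear and quadratic candidates in each relevant degree are linearly independent, so that the coefficient equations have a unique solution for the recorded part.

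A genuine subtlety is that Proposition \ref{x-E_8} pins down several of the $j_3^*(x_{2k})$ only modulo $(p_1)$ or $(p_1^2)$. Since $j_3^*(x_4)=p_1$, a monomial in the $x_i$ lies in $(p_1)$ exactly when it contains the factor $x_4$; hence the part of $\P^1 x_{2k}$ free of $x_4$ is already determined by the mod $(p_1)$ data, whereas a quadratic term $x_4x_{2m}$ is visible only through the $p_1$-linear part of the Wu computation, i.e.\ through the coefficient of $p_1\cdot j_3^*(x_{2m})$. These $p_1$-linear contributions are governed by the generators known exactly or modulo $(p_1^2)$, namely $x_4,x_{16},x_{24},x_{28},x_{36}$, together with the normalizations built into Proposition \ref{x-E_8}---that $j_3^*(x_{40})$ and $j_3^*(x_{60})$ contain no $p_5p_4p_1$, respectively no $p_7^2p_1$, term---which are precisely what fix the $x_4$-components of $x_{40}$ and $x_{60}$. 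I would confirm in each case that the recorded linear and quadratic parts are insensitive to the truncations.

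The conceptual content is therefore identical to the $\E_6$ case, and I expect the main obstacle to be purely computational: the polynomials $j_3^*(x_{2k})$ of Proposition \ref{x-E_8} have up to a dozen monomials in seven variables of degree as high as $60$, and applying $\P^1$ and re-expanding in the $x$-basis across six primes is a long and error-prone bookkeeping exercise. I would organize it as one uniform procedure---Wu formula, then linear algebra over $\Z/p$ in each target degree---and carry it out mechanically for every pair $(p,k)$, so that the only real risk is arithmetic slips rather than any missing idea.
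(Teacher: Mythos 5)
Your proposal is correct and follows essentially the same route as the paper: the paper's proof of this lemma is literally ``the same as Lemma \ref{P^1-E_6}'', i.e.\ naturality of $\P^1$ under $j_3^*$, the mod $p$ Wu formula in $H^*(B\Spin(15);\Z/p)$, the generators of Proposition \ref{x-E_8}, and coefficient matching by degree reasons. Your extra remarks---that the $x_4$-quadratic terms are only visible through the $p_1$-linear data, so the truncations mod $(p_1)$, $(p_1^2)$ and the normalizations of $j_3^*(x_{40})$, $j_3^*(x_{60})$ in Proposition \ref{x-E_8} are exactly what make the answer well defined---are a faithful (indeed more explicit) account of what the paper leaves implicit.
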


\begin{proof}
The proof is the same as Lemma \ref{P^1-E_6}.
\end{proof}

\begin{lemma}
\label{P^1-E_7}
The linear and the quadratic parts of $\P^1x_i$ for $\E_7$ are given by:
\renewcommand{\arraystretch}{1.2}
\begin{table}[H]
\centering
\begin{tabular}{lllll}
&$i=4$&$i=12$&$i=16$&$i=20$\\
&$i=24$&$i=28$&$i=36$\\
$p=11$&$2x_{24}$&$-3x_4x_{28}-5x_{12}x_{20}$&$6x_{36}$&$-3x_4x_{36}-5x_{12}x_{28}$\\
&&$-2x_{16}^2$&&$+10x_{20}^2$\\
&$0$&$5x_{12}x_{36}+2x_{20}x_{28}$&$9x_{28}^2$\\
&&$+2x_{24}^2$\\
$p=13$&$-x_{28}+8x_4x_{24}$&$-2x_{36}+7x_{12}x_{24}$&$-2x_4x_{36}-4x_{12}x_{28}$&$7x_{16}x_{28}+4x_{20}x_{24}$\\
&&$+2x_{16}x_{20}$&$-3x_{16}x_{24}-3x_{20}^2$\\
&$5x_{12}x_{36}+3x_{20}x_{28}+6x_{24}^2$&$5x_{24}x_{28}$&$-3x_{24}x_{36}$\\
$p=17$&$4x_{36}$&$5x_{16}x_{28}-2x_{20}x_{24}$&$-x_{12}x_{36}+2x_{20}x_{28}+6x_{24}^2$&$7x_{24}x_{28}$\\
&$7x_{28}^2$&$-3x_{24}x_{36}$&$0$
\end{tabular}
\end{table}
\end{lemma}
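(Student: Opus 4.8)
The plan is to mirror the proof of Lemma \ref{P^1-E_6}, replacing the inclusion $j_1\colon\mathrm{Spin}(10)\to\E_6$ by $j_2\colon\mathrm{Spin}(11)\to\E_7$ and computing in $H^*(B\mathrm{Spin}(11);\Z/p)$, where $H^*(B\mathrm{Spin}(11))=\Z_{(p)}[p_1,\dots,p_5]$, via the mod $p$ Wu formula recalled in that lemma. The first task is to record $j_2^*(x_i)$ as a polynomial in $p_1,\dots,p_5$ for every generator $i\in\{4,12,16,20,24,28,36\}$. For $i=12,20$ these are supplied directly by Proposition \ref{x-E_7}(1). For the remaining generators $i\in\{4,16,24,28,36\}$, which satisfy $k_2^*(x_i)=x_i$, I would use commutativity of the defining diagram of subgroup inclusions, namely $j_2^*\circ k_2^*=i_2^*\circ j_3^*$, to obtain $j_2^*(x_i)=i_2^*(j_3^*(x_i))$; here $i_2^*\colon H^*(B\mathrm{Spin}(15))\to H^*(B\mathrm{Spin}(11))$ is the restriction fixing $p_1,\dots,p_5$ and sending $p_6,p_7$ to $0$, so $j_2^*(x_i)$ is read off from Proposition \ref{x-E_8} simply by setting $p_6=p_7=0$.

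With these Pontrjagin-class expressions in hand, the second step is to apply the Wu formula to compute $j_2^*(\P^1 x_i)=\P^1 j_2^*(x_i)$ for each $i$ and each $p\in\{11,13,17\}$, reducing modulo $p$ throughout, and the third step is to re-express the resulting polynomial in $p_1,\dots,p_5$ back in terms of the $\E_7$ generators, reading off the linear and quadratic parts by a degree argument exactly as in the passage from $7p_5p_4+2p_5p_2^2$ to $6x_{18}^2+7x_{16}x_{10}^2$ in Lemma \ref{P^1-E_6}. For the five generators pulled back from $\E_8$ there is an independent route that serves as a cross-check: apply $k_2^*$ directly to the $\E_8$ formulas of Lemma \ref{P^1-E_8}, rewriting the images of $x_{40},x_{48},x_{60}$ modulo cubes via Proposition \ref{x-E_7}(2). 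The instructive feature, explaining why the $\E_7$ and $\E_8$ tables differ, is that a \emph{linear} term in the $\E_8$ answer becomes \emph{quadratic} in $\E_7$; for instance at $p=13$ the linear term $8x_{40}$ of $\P^1 x_{16}$ contributes $\tfrac13 x_{12}x_{28}+\tfrac53 x_{16}x_{24}+400x_{20}^2$, and combining with the quadratic part $-2x_4x_{36}+4x_{16}x_{24}$ reproduces the tabulated entry $-2x_4x_{36}-4x_{12}x_{28}-3x_{16}x_{24}-3x_{20}^2$ modulo $13$.

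The heaviest part of the argument is purely computational: each $\P^1 j_2^*(x_i)$ is a sizeable polynomial in five Pontrjagin classes that must be reduced mod $p$ and matched against a linear system. The genuine conceptual obstacle is the final matching step. Since $\mathrm{Spin}(11)$ has rank $5$ while $\E_7$ has rank $7$, the map $j_2^*$ is not injective (its kernel is a nonzero prime ideal by dimension count), so one cannot in general recover the $\E_7$-coefficients of $\P^1 x_i$ from $j_2^*(\P^1 x_i)$. I would therefore verify, degree by degree, that the quadratic monomials in the $\E_7$ generators occurring in the relevant dimension have linearly independent images under $j_2^*$, equivalently that $\ker j_2^*$ meets each relevant quadratic subspace trivially. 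This can be arranged because the images $j_2^*(x_{12})=-6p_3+p_2p_1$ and $j_2^*(x_{20})=p_5$ involve the odd classes $p_3,p_5$, which combine with the even-degree images of the other generators to produce distinguishable Pontrjagin monomials; the $k_2^*$ cross-check then removes any residual ambiguity for the generators coming from $\E_8$. A further minor point is to confirm that the terms discarded in the mod $p_1^2$ and mod $p_1$ truncations of Proposition \ref{x-E_8} affect only $x_4$-multiples that are irrelevant to the recorded linear and quadratic parts.
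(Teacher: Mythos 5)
Your proposal inverts the paper's emphasis. The paper's actual proof is one line: for $i=4,16,24,28,36$ it takes what you relegate to a ``cross-check'' as the proof itself --- since $k_2^*(x_i)=x_i$, naturality gives $\P^1x_i=k_2^*(\P^1x_i^{\E_8})$, and one simply substitutes the quadratic expressions of Proposition \ref{x-E_7}(2) for $k_2^*(x_{40}),k_2^*(x_{48}),k_2^*(x_{60})$ into the table of Lemma \ref{P^1-E_8}; your sample computation for $\P^1x_{16}$ at $p=13$ is exactly this and is arithmetically correct. Only for $i=12,20$ does the paper run the $\mathrm{Spin}(11)$ Wu-formula computation in the style of Lemma \ref{P^1-E_6}. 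The point you underweight is that the $k_2^*$ route is a direct forward computation with no inverse problem at all, whereas your primary route through $j_2^*$ requires recovering $\E_7$-coefficients from a non-injective restriction, which is precisely where the difficulties sit.

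Two concrete weaknesses in that primary route. First, your matching criterion is mis-stated: it is not enough that $\ker j_2^*$ meet the \emph{quadratic subspace} trivially, because $\P^1j_2^*(x_i)$ also contains the $j_2^*$-images of the cubic and higher terms of $\P^1x_i$. What you need is that no element of $\ker j_2^*$ in the relevant degree has a nonzero quadratic \emph{component}; a kernel element of the form (quadratic)$+$(cubic) satisfies your stated condition while making the quadratic part unreadable from $j_2^*(\P^1x_i)$. In the degrees occurring here (up to $100$, with many monomials in $p_2,\dots,p_5$ once one reduces mod $p_1$) this is a nontrivial linear-algebra verification --- compare the Remark after Lemma \ref{rep-Spin}, where an analogous dependence ($a_{p_5^6}$ versus $a_{p_5^5p_3p_2}$) actually shows up. Second, for $i=28,36$ Proposition \ref{x-E_8} gives $j_3^*(x_i)$ only mod $(p_1^2)$, so $\P^1j_2^*(x_i)$ is determined only mod $(p_1)$; your closing claim that the lost terms are ``irrelevant'' is true but needs the degree count you omit: the target degrees $28+2(p-1)\in\{48,52,60\}$ and $36+2(p-1)\in\{56,60,68\}$ are not generator degrees of $\E_7$, and $\E_7$ has no generators in degrees $44,48,52,56,64$, so no linear term and no quadratic monomial divisible by $x_4$ can occur --- these are the only terms invisible mod $(p_1)$. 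Since your proposal does contain the paper's $k_2^*$ argument for the five pulled-back generators and agrees with the paper for $i=12,20$, it is rescuable essentially as written; but as structured, the route you present as primary carries avoidable gaps that the paper's ordering of the two routes eliminates.
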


\begin{proof}
$\P^1x_i$ for $i=4,16,24,28,36$ can be calculated by Proposition \ref{x-E_7} and Lemma \ref{P^1-E_8}, and $\P^1x_i$ for $i=12,20$ can be calculated in the same way as Lemma \ref{P^1-E_6}.
\end{proof}

We now prove:

\begin{proposition}
\label{Samelson-3}
The Samelson products $\langle\bar{\epsilon}_i,\bar{\epsilon}_j\rangle$ in $G$ is non-trivial for $(G,p,\{i,j\})$ in the following table.
\renewcommand{\arraystretch}{1.2}
\begin{table}[H]
\centering
\begin{tabular}{lll}
$\E_6$&$p=5$&$\{5,8\},\{8,8\}$\\
&$p=7$&$\{2,9\},\{5,6\},\{6,6\},\{6,9\},\{9,9\}$\\
&$p=11$&$\{6,9\},\{8,8\},\{9,9\}$\\
$\E_7$&$p=11$&$\{2,14\},\{6,10\},\{6,14\},\{8,8\},\{10,10\},\{10,14\},\{14,14\}$\\
&$p=13$&$\{8,12\},\{10,10\},\{10,12\},\{12,12\}$\\
&$p=17$&$\{8,14\},\{10,12\},\{10,14\},\{12,12\},\{12,14\},\{14,14\}$\\
$\E_8$&$p=11$&$\{8,20\},\{14,14\},\{14,20\},\{20,20\}$\\
&$p=13$&$\{2,18\},\{8,12\},\{12,12\},\{12,18\},\{18,18\}$\\
&$p=17$&$\{8,20\},\{14,14\},\{20,20\}$\\
&$p=19$&$\{2,24\},\{8,12\},\{8,18\},\{8,24\},\{12,14\},\{12,18\},\{12,24\},\{14,18\},\{18,18\},$\\
&&$\{18,24\},\{24,24\}$\\
&$p=23$&$\{14,20\},\{18,18\},\{20,20\}$\\
&$p=29$&$\{12,24\},\{18,18\},\{18,24\},\{20,20\},\{24,24\}$
\end{tabular}
\end{table}
\end{proposition}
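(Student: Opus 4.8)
The plan is to establish every entry of the table by a single mechanism: for each triple $(G,p,\{i,j\})$ I would exhibit a witness $k\in t_p(G)$ satisfying the hypotheses of Proposition \ref{P^1}, whereupon non-triviality of $\langle\bar\epsilon_i,\bar\epsilon_j\rangle$ follows immediately. Concretely, for a given triple I would look for $k\in t_p(G)$ with $i+j>k$, $k\equiv i+j\bmod(p-1)$, together with integers $s_i\le\min\{r_i-1,r_k-1\}$ and $s_j\le\min\{r_j-1,r_k-1\}$ for which the degree balance $k+r_k(p-1)=i+j+(s_i+s_j)(p-1)$ holds, and then check that the quadratic part of $\P^{r_k}x_{2k}$ actually contains $\lambda\,x_{2i+2s_i(p-1)}x_{2j+2s_j(p-1)}$ with $\lambda$ a unit mod $p$. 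The constraints $i+j>k$, $k\equiv i+j\bmod(p-1)$, $s_i\le r_i-1$, $s_j\le r_j-1$ and $r_k\in\{1,2\}$ leave only a handful of candidate $k$, so locating a witness is a short finite search; the content lies entirely in the coefficient check, and verifying that each listed triple admits such a witness is exactly what singles out the entries of the table.

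When the witness has $r_k=1$ one has $\P^{r_k}=\P^1$, and the required quadratic part is read off directly from Lemmas \ref{P^1-E_6}, \ref{P^1-E_7} and \ref{P^1-E_8}. For example, for $(\E_6,7,\{2,9\})$ one takes $k=5$ and reads $\P^1x_{10}\vartriangleright x_4x_{18}$ (so $s_i=s_j=0$, $\lambda=1$), while for $(\E_6,7,\{6,9\})$ one takes $k=9$ and reads $\P^1x_{18}\vartriangleright -x_{12}x_{18}$. The rank conditions are then trivially checked, and Proposition \ref{P^1} applies.

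When the witness has $r_k=2$ one needs the quadratic part of $\P^2x_{2k}$, which is not tabulated. Here I would invoke the Adem relation $\P^1\P^1=2\P^2$, which at the odd prime $p$ gives $\P^2x_{2k}=\tfrac12\,\P^1(\P^1x_{2k})$, and expand the right-hand side by the Cartan formula using both the linear and the quadratic parts recorded in the Lemmas. The quadratic part of $\P^2x_{2k}$ collects two contributions: from the linear part $c\,x_{2k+2(p-1)}$ of $\P^1x_{2k}$ one takes the quadratic part of $\P^1x_{2k+2(p-1)}$, and from each quadratic term $x_ax_b$ of $\P^1x_{2k}$ one takes $(\P^1x_a)_{\mathrm{lin}}x_b+x_a(\P^1x_b)_{\mathrm{lin}}$ (the remaining terms being cubic in the generators and hence irrelevant). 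This is precisely why the Lemmas record both parts. Thus for $(\E_6,5,\{5,8\})$ one takes $k=5$ and computes $\P^2x_{10}=\tfrac12\P^1(-x_{18})=\tfrac12x_{10}x_{16}$; for $(\E_8,11,\{8,20\})$ and $(\E_8,11,\{14,14\})$ one takes $k=8$ and finds $\P^2x_{16}\vartriangleright 9\,x_{16}x_{40}+5\,x_{28}^2$ mod $11$; and for $(\E_8,13,\{12,18\})$ one takes $k=18$ and obtains $\P^2x_{36}\vartriangleright 12\,x_{24}x_{60}$ mod $13$ (now with $s_i=0$, $s_j=1$). The same reductions dispatch every remaining entry.

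The main obstacle is bookkeeping rather than anything conceptual. Each $r_k=2$ case requires a two-step Steenrod computation, and at every stage one must track which generator carries which $t_p$-index (hence which $s_i,s_j$), confirm the bounds $s_i\le\min\{r_i-1,r_k-1\}$ and $s_j\le\min\{r_j-1,r_k-1\}$, and reduce the accumulated rational coefficients modulo $p$ to certify they do not vanish. Since a single computation of $\P^{r_k}x_{2k}$ frequently detects several triples at once (as $\P^2x_{16}$ above settles both $\{8,20\}$ and $\{14,14\}$ for $\E_8$ at $p=11$), I would organize the argument by witness $k$ rather than by triple so as to minimize repetition, recording for each relevant $k$ the quadratic part of $\P^{r_k}x_{2k}$ in a single auxiliary table from which all entries of the proposition can be read off.
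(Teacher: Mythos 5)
Your proposal is correct and coincides with the paper's own proof: the paper likewise verifies the hypotheses of Proposition \ref{P^1} entry by entry using the tabulated linear and quadratic parts in Lemmas \ref{P^1-E_6}, \ref{P^1-E_7} and \ref{P^1-E_8}, invoking the Adem relation $\mathcal{P}^1\mathcal{P}^1=2\mathcal{P}^2$ (with the Cartan formula, exactly as you describe) for the witnesses with $r_k=2$. Your spelled-out sample computations (e.g.\ $\mathcal{P}^2x_{16}\vartriangleright 9x_{16}x_{40}+5x_{28}^2$ for $\E_8$, $p=11$) check out, so your write-up is just a more explicit version of the paper's one-line verification.
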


\begin{proof}
We can verify the conditions of Proposition \ref{P^1} by Lemma \ref{P^1-E_6}, \ref{P^1-E_8} and \ref{P^1-E_7}, where we have $\mathcal{P}^1\mathcal{P}^1=2\mathcal{P}^2$ by the Adem relation. Thus the result follows from Proposition \ref{P^1}.
\end{proof}


\section{Chern classes}

In order to determine (non-)triviality of the Samelson products that are not detected in the previous sections, we will use representations of the exceptional Lie groups and their Chern classes. Then we calculate these Chern classes. We refer to \cite{A} for basic materials of representations that we consider in this section. For the canonical representation $\lambda_n\colon\mathrm{Spin}(n)\to\SU(n)$, we have 
\begin{equation}
\label{canonical-rep}
c_{2i-1}(\lambda_n)=0,\quad c_{2i}(\lambda_n)=(-1)^ip_i\quad(i=2,\ldots,n)
\end{equation}
where $p_{2k}=c_k^2$ if $n=2k$. Then by Girard's formula on power sums and elementary symmetric polynomials
\begin{equation}
\label{Girard}
n!\mathrm{ch}_n=\sum_{i_1+2i_1+\cdots+ni_n=n}(-1)^{n+i_1+\cdots+i_n}\frac{n(i_1+\cdots+i_n-1)!}{i_1!\cdots i_n!}c_1^{i_1}\cdots c_n^{i_n},
\end{equation}
we can calculate the Chern character of $\lambda$, where $\mathrm{ch}_n$ denotes the $2n$-dimensional part of the Chern character. Let $\alpha\colon\mathrm{Spin}(11)\to\SU(55)$ be the the adjoint representation of $\mathrm{Spin}(11)$, and let $\Delta^+,\Delta$ be the positive half spin representation of $\mathrm{Spin}(10)$ and the spin representation of $\mathrm{Spin}(11)$. The weights of $\alpha$ are the roots of $\mathrm{Spin}(11)$ by definition. As in \cite{A}, the weights of $\Delta^+$ are $\epsilon_1t_1+\cdots+\epsilon_5t_5$ ($\epsilon_1\cdots\epsilon_5=1$) and the weights of $\Delta$ are $\epsilon_1t_1+\cdots+\epsilon_5t_5$ ($\epsilon_1\cdots\epsilon_5=\pm 1$). Then one can calculate $\mathrm{ch}(\alpha),\mathrm{ch}(\Delta),\mathrm{ch}(\Delta^+)$ with an assistance of a computer as follows.

\begin{lemma}
\label{rep-Spin}
\begin{enumerate}
\item $i!\mathrm{ch}_i(\lambda_{10}+\Delta^++1)$ includes the following terms:
\renewcommand{\arraystretch}{1.2}
\begin{table}[H]
\centering
\begin{tabular}{llllllll}
$i=2$&$6p_1$&$i=5$&$60c_5$&$i=6$&$18p_3$\\
$i=8$&$60p_4+24p_3p_1$&$i=9$&$126p_2c_5$&$i=10$&$135p_4p_1+630c_5^2$\\
$i=12$&$135p_4p_2+18p_3^2$&$i=14$&$231p_4p_3+\frac{1337}{4}p_4p_2p_1+2233p_2c_5^2$&$i=20$&$\frac{6885}{4}p_4^2p_2$
\end{tabular}
\end{table}
\item $i!\mathrm{ch}_i(2\lambda_{11}+\Delta+2)$ includes the following terms:
\renewcommand{\arraystretch}{1.2}
\begin{table}[H]
\centering
\begin{tabular}{llll}
$i=2$&$12p_1$&$i=6$&$36p_3$\\
$i=8$&$120p_4+48p_3p_1$&$i=10$&$1260p_5+270p_4p_1$\\
$i=12$&$270p_4p_2+36p_3^2$&$i=14$&$4466p_5p_2+462p_4p_3+\frac{1337}{2}p_4p_2p_1$\\
$i=18$&$39672p_5p_4+6993p_5p_2^2+1134p_4p_3p_2$&$i=20$&$151300p_5^2+189190p_5p_4p_1+37570p_5p_3p_2$\\
&&&$+\frac{50785}{2}p_5p_2^2p_1+\frac{6885}{2}p_4^2p_2$\\
$i=22$&$179949p_5p_4p_2+\frac{27797}{4}p_5p_2^3$&$i=24$&$950400p_5^2p_2+390390p_5p_4p_3+\frac{19569}{2}p_4^3$\\
&&&$+\frac{43887}{8}p_4^2p_2^2$\\
$i=30$&$\frac{27047655}{2}p_5^2p_3p_2+\frac{3907395}{2}p_5p_4p_3^2$\\
&$+\frac{2450295}{8}p_5p_3^2p_2^2$
\end{tabular}
\end{table}
\item $i!\mathrm{ch}_i(\alpha+4\Delta+65)$ includes the following terms:
\renewcommand{\arraystretch}{1.2}
\begin{table}[H]
\centering
\begin{tabular}{llll}
$i=2$&$60p_1$&$i=8$&$1440p_4$\\
$i=10$&$0\cdot p_5$&$i=12$&$-7560p_4p_2$\\
$i=14$&$92400p_5p_2-40110p_4p_2p_1$&$i=18$&$-982800p_5p_4$\\
$i=20$&$4600200p_5^2-1748790p_4^2p_2$&$i=22$&$34950300p_5^2p_1-6715170p_4^2p_3$\\
$i=24$&$-69872880p_5^2p_2+20077794p_4^3$&$i=26$&$-219540750p_5p_4^2-41441400p_4p_3^3$\\
&$+\frac{22514031}{2}p_4^2p_2^2$&&$-697554000p_5^2p_2p_1$\\
$i=30$&$-2289787500p_5^3+\frac{10586291625}{2}p_5p_4^2p_2$&$i=32$&$-26808164160p_5^3p_1-29632951680p_5^2p_4p_2$\\
&$-\frac{2479051575}{2}p_4^3p_3$&&$+1801812456p_5^2p_3^2-1601056128p_5p_3^3p_2$\\
$i=34$&$66371012400p_5^3p_2$&$i=38$&$-952563046800p_5^3p_4-949011128850p_5^3p_2^2$\\
&&&$-46394357586p_5^2p_3^3$\\
$i=42$&$-1030173212250p_3^2p_5^3$&$i=50$&$-914425331875000 p_5^5$
\end{tabular}
\begin{tabular}{ll}
$i=60$&$-340771201982677620p_5^5p_3p_2-12363661137209454345p_5^4p_4^2p_2+776927112035890410p_5^4p_4p_3^2$\\
&$-\frac{20464209777645659655}{2}p_5^4p_4p_2^3+\frac{4154188924169320995}{4}p_5^4p_3^2p_2^2-\frac{15713809488581615145}{16}p_5^4p_2^5$\\
&$-\frac{12852298085402204835}{8}p_5^3p_4p_3^3p_2+\frac{44662086886161465}{2}p_5^3p_3^5-\frac{15212346544088595405}{32}p_5^3p_3^3p_2^3$\\
&$+\frac{16285084675436347155}{8}p_5^2p_4^5+\frac{634726068898356739815}{64}p_5^2p_4^4p_2^2-\frac{226812497505304105395}{32}p_5^2p_4^3p_3^2p_2$\\
&$+\frac{149364900987340422405}{32}p_5^2p_4^3p_2^4+\frac{5425603982540815815}{8}p_5^2p_4^2p_3^4-\frac{658004024293760076975}{128}p_5^2p_4^2p_3^2p_2^3$\\
&$+\frac{227646297744518937585}{512}p_5^2p_4^2p_2^6+\frac{16657727505432381165}{16}p_5^2p_4p_3^4p_2^2-\frac{236740660412585333865}{512}p_5^2p_4p_3^2p_2^5$\\
&$+\frac{19109048218262101365}{2048}p_5^2p_4p_2^8-32085165434973420p_5^2p_3^6p_2+\frac{10050849772724723655}{128}p_5^2p_3^4p_2^4$\\
&$-\frac{126872100695409424425}{512}p_4^7p_2-\frac{561833719261457328105}{2048}p_4^6p_2^3-\frac{417106897597216552845}{8192}p_4^5p_2^5. $
\end{tabular}
\end{table}
\end{enumerate}
\end{lemma}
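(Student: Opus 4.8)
The plan is to compute each Chern character directly from weights and then convert to the Pontrjagin/Euler-class basis. Since the Chern character is additive over direct sums and the integer summands $1,2,65$ contribute only to $\mathrm{ch}_0$, for every $i\ge 1$ we have
$\mathrm{ch}_i(\lambda_{10}+\Delta^++1)=\mathrm{ch}_i(\lambda_{10})+\mathrm{ch}_i(\Delta^+)$,
$\mathrm{ch}_i(2\lambda_{11}+\Delta+2)=2\,\mathrm{ch}_i(\lambda_{11})+\mathrm{ch}_i(\Delta)$, and
$\mathrm{ch}_i(\alpha+4\Delta+65)=\mathrm{ch}_i(\alpha)+4\,\mathrm{ch}_i(\Delta)$, so it suffices to treat each irreducible summand separately and then combine with the indicated multiplicities. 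For the canonical representations $\lambda_n$ the Chern classes are recorded in \eqref{canonical-rep}, and substituting these into Girard's formula \eqref{Girard} expresses $i!\,\mathrm{ch}_i(\lambda_n)$ as an explicit polynomial in the $p_k$; this disposes of the $\lambda_{10}$ and $\lambda_{11}$ contributions with no further input.

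For the half-spin, spin, and adjoint representations I would use the weight descriptions from \cite{A}. A representation with weight multiset $\{w\}$ has Chern character $\sum_w e^w$ upon restriction to the maximal torus, each weight $w=\epsilon_1t_1+\cdots+\epsilon_5t_5$ being regarded as a class in $H^2$ of the classifying space of the torus; hence its degree-$2i$ component is $\mathrm{ch}_i=\frac{1}{i!}\sum_w w^i$. Expanding this power sum over the listed weights yields a symmetric polynomial in $t_1,\dots,t_5$. Because $\mathrm{ch}_i$ is the restriction of a class on $B\mathrm{Spin}$, this polynomial is invariant under the relevant Weyl group—$D_5$ for $\mathrm{Spin}(10)$ and $B_5$ for $\mathrm{Spin}(11)$—so it lies in the corresponding invariant subring. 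For $\mathrm{Spin}(10)$ that ring is generated by $p_1,\dots,p_4$ and $c_5$ (which is why terms such as $60c_5$ and $126p_2c_5$ appear in part (1)), while for $\mathrm{Spin}(11)$ it is generated by $p_1,\dots,p_5$ alone; rewriting $\frac{1}{i!}\sum_w w^i$ in these generators produces the tabulated polynomials, where $p_5=c_5^2$ in the $\mathrm{Spin}(10)$ case as recorded after \eqref{canonical-rep}.

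The main obstacle is purely computational rather than conceptual. The weight multisets are large—$16$, $32$, and $55$ elements for $\Delta^+$, $\Delta$, and $\alpha$—and the target degrees run as high as $i=60$ in part (3), so the power sums $\sum_w w^i$ expand into very large Weyl-invariant polynomials whose reduction to the $p_k$ basis is wholly impractical by hand. I would therefore carry out both the expansion and the change of basis on a computer, as the statement indicates, retaining only the monomials listed in the tables; these are exactly the terms that feed into the later fibration argument over $\mathrm{SU}(\infty)$, so the bulk of the output may be discarded. Once the computation is organized this way, the remaining work is to verify the stated coefficients, and there is no difficulty beyond that bookkeeping.
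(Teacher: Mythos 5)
Your proposal matches the paper's own argument: the paper likewise uses additivity of the Chern character (so the trivial summands $1,2,65$ drop out), computes $\mathrm{ch}_i(\lambda_n)$ from the Chern classes \eqref{canonical-rep} via Girard's formula \eqref{Girard}, and evaluates $\mathrm{ch}(\Delta^+)$, $\mathrm{ch}(\Delta)$, $\mathrm{ch}(\alpha)$ as weight power sums from \cite{A} (the roots of $\mathrm{Spin}(11)$ for $\alpha$), carried out and reduced to the $p_k$, $c_5$ basis by computer. Your explicit appeal to Weyl invariance to justify landing in the stated generators is a harmless elaboration of what the paper leaves implicit, so there is nothing to correct.
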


\begin{remark}
For the case $(i,\ell)=(60,8)$, the twenty-four terms in the above list are chosen so that the following condition holds. For a monomial $m\in H^{120}(B\Spin(11))/(p_1)$, let $a_m$ be the vector consisting of the coefficients of $m$ in the monomials of $H^{120}(B\E_8)/(x_4)$. Then $a_m$ for the above monomials form a square matrix, which is invertible.
Note that $a_{p_5^6}$ and $a_{p_5^5p_3p_2}$ are linearly dependent.
\end{remark}

Let $\rho_\ell$ be the irreducible 27, 56, 248 dimensional representation of $\E_\ell$ for $\ell=6,7,8$ respectively. Then we have
$$\rho_6\circ j_1=\lambda_{10}+\Delta^++1,\quad\rho_7\circ j_2=2\lambda_{11}+\Delta+2,\quad\rho_8\circ j_3\circ i_2=\alpha+4\Delta+65.$$
Thus by Proposition \ref{x-E_8}, \ref{x-E_7}, \ref{x-E_6} and Lemma \ref{rep-Spin}, we can determine the linear and the quadratic parts of $\mathrm{ch}_i(\rho_\ell)$ except for the coefficient of $x_{36}x_{48}$ in $\mathrm{ch}_{42}(\rho_8)$. Then by the inductive use of \eqref{Girard}, we obtain the following proposition, which gives the linear and the quadratic parts of $c_i(\rho_\ell)$ in each case except for $(i,\ell)=(42,8)$.

\begin{proposition}
\label{Chern-1}
\begin{enumerate}
\item The linear and the quadratic parts of the Chern classes $c_i(\rho_6)$ are:
\renewcommand{\arraystretch}{1.2}
\begin{table}[H]
\centering
\begin{tabular}{llllllll}
$i=2$&$-3x_4$&$i=5$&$12x_{10}$&$i=6$&$\frac{1}{2}x_{12}$\\
$i=8$&$-\frac{5}{8}x_{16}-\frac{11}{16}x_4x_{12}$&$i=9$&$14x_{18}$&$i=10$&$\frac{3}{4}x_4x_{16}+9x_{10}^2$\\
$i=12$&$\frac{5}{32}x_{24}-\frac{13}{384}x_{12}^2$&$i=14$&$-\frac{19}{192}x_4x_{24}+\frac{17}{2}x_{10}x_{18}-\frac{1}{12}x_{12}x_{16}$&$i=20$&$\frac{1}{512}x_{16}x_{24}$
\end{tabular}
\end{table}
\item The linear and the quadratic parts of the Chern classes $c_i(\rho_7)$ are:
\renewcommand{\arraystretch}{1.2}
\begin{table}[H]
\centering
\begin{tabular}{llll}
$i=2$&$-6x_4$&$i=6$&$x_{12}$\\
$i=8$&$-\frac{5}{4}x_{16}-\frac{17}{4}x_4x_{12}$&$i=10$&$-126x_{20}+\frac{21}{4}x_4x_{16}$\\
$i=12$&$\frac{9}{2}x_{24}+\frac{907}{2}x_4x_{20}+\frac{1}{24}x_{12}^2$&$i=14$&$-\frac{319}{40}x_{28}-\frac{67}{8}x_4x_{24}+\frac{43}{80}x_{12}x_{16}$\\
$i=18$&$\frac{1229}{60}x_{36}-\frac{749}{200}x_{12}x_{24}+\frac{601}{24}x_{16}x_{20}$&$i=20$&$-\frac{1043}{24}x_4x_{36}-\frac{71}{480}x_{12}x_{28}-\frac{441}{160}x_{16}x_{24}+373x_{20}^2$\\
$i=22$&$-\frac{137}{640}x_{16}x_{28}+\frac{1827}{20}x_{20}x_{24}$&$i=24$&$-\frac{1711}{1440}x_{12}x_{36}+\frac{297}{20}x_{20}x_{28}+\frac{4047}{800}x_{24}^2$\\
$i=30$&$\frac{963}{400}x_{24}x_{36}$
\end{tabular}
\end{table}
\item The linear and the quadratic parts of the Chern classes $c_i(\rho_8)$ are:
\renewcommand{\arraystretch}{1.2}
\begin{table}[H]
\centering
\begin{tabular}{llll}
$i=2$&$-30x_4$&$i=8$&$-15x_{16}$\\
$i=12$&$-126x_{24}$&$i=14$&$-165x_{28}+3306x_4x_{24}$\\
$i=18$&$-1820x_{36}$&$i=20$&$-\frac{23001}{5}x_{40}+\frac{408519}{10}x_4x_{36}+\frac{27821}{20}x_{16}x_{24}$\\
$i=22$&$106233x_4x_{40}+\frac{5685}{16}x_{16}x_{28}$&$i=24$&$\frac{1746822}{5}x_{48}+\frac{120512}{25}x_{24}^2$\\
$i=26$&$-7648542x_4x_{48}+\frac{92275}{24}x_{16}x_{36}$&$i=30$&$-15265250x_{60}-\frac{24568999}{576}x_{24}x_{36}$\\
&$+\frac{18645}{4}x_{24}x_{28}$\\
$i=32$&$\frac{1236701415}{4}x_{4}x_{60}-\frac{747928}{5}x_{16}x_{48}$&$i=34$&$-\frac{2170113}{10}x_{28}x_{40}$\\
&$+\frac{40849521}{200}x_{24}x_{40}+\frac{29810159}{960}x_{28}x_{36}$\\
$i=38$&$-\frac{230904475}{12}x_{16}x_{60}+\frac{16714837}{20}x_{28}x_{48}$&$i=42$&$68678214150x_{24}x_{60}$\\
&$+\frac{95373791}{60}x_{36}x_{40}$\\
$i=50$&$-2930823500 x_{40}x_{60}$&$i=60$&$-\frac{23018190805225}{48}x_{60}^2$
\end{tabular}
\end{table}
\end{enumerate}
\end{proposition}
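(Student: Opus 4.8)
The plan is to determine each $c_i(\rho_\ell)$ in two stages: first compute the Chern character $\mathrm{ch}_i(\rho_\ell)$ through quadratic order, then invert Girard's formula \eqref{Girard} to pass from Chern characters to Chern classes. Only the linear and quadratic parts in the generators $x_j$ need be retained at every step, which is what keeps the bookkeeping manageable.

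First I would use the factorizations $\rho_6\circ j_1=\lambda_{10}+\Delta^++1$, $\rho_7\circ j_2=2\lambda_{11}+\Delta+2$, and $\rho_8\circ j_3\circ i_2=\alpha+4\Delta+65$. Pulling $\mathrm{ch}(\rho_\ell)$ back along the relevant inclusion yields precisely the Pontrjagin-class expressions of Lemma \ref{rep-Spin}. The pullback $j_\ell^*$ is \emph{not} injective as a ring map, since $H^*(B\E_\ell)$ has strictly more polynomial generators than the corresponding $H^*(B\mathrm{Spin})$, so I cannot simply invert it globally. Instead I would write the desired linear-plus-quadratic part of $\mathrm{ch}_i(\rho_\ell)$ as an unknown combination of the finitely many generators $x_{2a}$ and products $x_{2a}x_{2b}$ in the given degree, expand each $j_\ell^*(x_{2a})$ and $j_\ell^*(x_{2a}x_{2b})$ using Propositions \ref{x-E_6}, \ref{x-E_7}, \ref{x-E_8}, and match coefficients against Lemma \ref{rep-Spin}.

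This matching determines the unknown coefficients because a suitable finite collection of Pontrjagin monomials detects the linear and quadratic $x$-monomials: their coefficient matrix in the $x$-monomial basis is invertible, as made explicit in the remark following Lemma \ref{rep-Spin} for the hardest instance $(i,\ell)=(60,8)$, where one works modulo the degree-$4$ generator and selects monomials whose coefficient vectors form an invertible square matrix. Cubic and higher $x$-terms feed only into Pontrjagin monomials outside this detecting set and so do not corrupt the linear and quadratic coefficients. The sole failure is the coefficient of $x_{36}x_{48}$ in $\mathrm{ch}_{42}(\rho_8)$, which the available data does not separate; this is why $(i,\ell)=(42,8)$ is excluded from the statement.

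Finally, knowing $\mathrm{ch}_1(\rho_\ell),\mathrm{ch}_2(\rho_\ell),\dots$ through quadratic order, I would run Girard's formula \eqref{Girard} (equivalently Newton's identities) inductively to solve for the $c_i(\rho_\ell)$, discarding cubic and higher terms throughout. The essential difficulty lies in the detection step above: one must verify that in each relevant degree the chosen Pontrjagin monomials give an invertible coefficient matrix, the single unavoidable failure being the $x_{36}x_{48}$ term in degree $84$. Beyond that the obstruction is purely the scale of the $\E_8$ computations in the top degrees, where the number of degree-matched quadratic monomials is large and the rational coefficients are enormous; these inversions together with the Girard recursion are best carried out with computer assistance, as the authors indicate.
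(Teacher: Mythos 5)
Your proposal is correct and follows essentially the same route as the paper: the authors likewise pull back along $j_1$, $j_2$, $j_3\circ i_2$ via the factorizations of $\rho_\ell$, determine the linear and quadratic parts of $\mathrm{ch}_i(\rho_\ell)$ by matching Lemma \ref{rep-Spin} against Propositions \ref{x-E_6}, \ref{x-E_7}, \ref{x-E_8} (with the detecting-matrix invertibility handled exactly as in the remark you cite, and the same single failure at the coefficient of $x_{36}x_{48}$ in $\mathrm{ch}_{42}(\rho_8)$), and then invert Girard's formula \eqref{Girard} inductively with computer assistance. The only slight imprecision is that the case $(i,\ell)=(42,8)$ is not wholly excluded from the statement --- the $x_{24}x_{60}$ term is still determined and listed, and only the $x_{36}x_{48}$ coefficient is left open, to be recovered mod $p$ later by the indirect Wu-formula argument of Proposition \ref{Chern-2}.
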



Since we are computing $c_i(\rho_\ell)$ via $\mathrm{Spin}(10)$ and $\mathrm{Spin}(11)$ whose ranks are less than $\mathrm{E}_\ell$, $c_i(\rho_\ell)$ might not be determined in some cases by the above direct computation. In these cases, we determine $c_i(\rho_\ell)\mod p$ by an indirect way as follows.

\begin{proposition}
\label{Chern-2}
The quadratic parts of $c_i(\rho_\ell)\mod p$ are given by:
\renewcommand{\arraystretch}{1.2}
\begin{table}[H]
\centering
\begin{tabular}{llll}
$(i,\ell,p)=(28,7,11)$&$3x_{20}x_{36}-2x_{28}^2$&$(i,p,\ell)=(36,7,13)$&$-4x_{36}^2$\\
\end{tabular}
\end{table}
\end{proposition}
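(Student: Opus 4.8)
The plan is to recover the two quadratic parts, which the direct computation leaves undetermined, by comparing two evaluations of the Steenrod operation $\mathcal{P}^1$. Computing $c_i(\rho_7)$ through $j_2\colon\mathrm{Spin}(11)\to\E_7$ produces only $j_2^*c_i(\rho_7)$, and since $\mathrm{rank}\,\mathrm{Spin}(11)=5<7=\mathrm{rank}\,\E_7$ the homomorphism $j_2^*$ is not injective; inverting it modulo $p$ cannot separate the quadratic monomials from the decomposables of the same degree (for instance $j_2^*(x_{20}^2x_{16})$ and $j_2^*(x_{20}x_{36})$ share the monomial $p_5^2p_4$ mod $11$), so the quadratic part of $c_i(\rho_7)$ survives only up to coefficients in $\Z/p$. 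I would therefore treat the quadratic part of $c_{28}(\rho_7)$ as an unknown $a\,x_{20}x_{36}+b\,x_{28}^2$ and that of $c_{36}(\rho_7)$ as $b\,x_{36}^2$, and determine the coefficients from $\mathcal{P}^1$.

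The Steenrod input is the universal congruence
$$\mathcal{P}^1c_n\equiv(n+p-1)\,c_{n+p-1}\pmod{\text{decomposables}},$$
which I would derive by writing the total power on Chern roots as $\mathcal{P}(t_j)=t_j+t_j^p$, computing $\mathcal{P}^1p_n=n\,p_{n+p-1}$ on the power sums, and applying Newton's identity $p_n\equiv(-1)^{n-1}n\,c_n$ modulo decomposables. Taking $n=i-(p-1)$ makes $c_i(\rho_7)$ occur in $\mathcal{P}^1c_{i-(p-1)}(\rho_7)$ with coefficient $i\bmod p$, which is $6$ for $(i,p)=(28,11)$ and $10$ for $(i,p)=(36,13)$ and hence a unit. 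As $c_{i-(p-1)}(\rho_7)$ is given by Proposition \ref{Chern-1}(2), I would also evaluate $\mathcal{P}^1c_{i-(p-1)}(\rho_7)$ directly in the generators, using the Leibniz rule and the values $\mathcal{P}^1x_k$ from Lemma \ref{P^1-E_7} at the relevant prime. The coefficient of the square $x_{28}^2$, respectively $x_{36}^2$, then comes out cleanly, because its only decomposable source in $\mathcal{P}^1c_{i-(p-1)}(\rho_7)$ is the single product $c_{14}(\rho_7)^2$, respectively $c_{18}(\rho_7)^2$ (a triple product would be cubic in the generators and so cannot reach a square): the first drops out since the linear part $-\tfrac{319}{40}x_{28}$ of $c_{14}(\rho_7)$ has $319=11\cdot29\equiv0\bmod 11$, while for the second one checks that its Wu coefficient is $\equiv0\bmod 13$. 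Equating the two evaluations thus gives $b\equiv-2\bmod 11$ for $c_{28}(\rho_7)$ and $b\equiv-4\bmod 13$ for $c_{36}(\rho_7)$.

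The hard part will be the cross coefficient $a$ of $x_{20}x_{36}$ in $c_{28}(\rho_7)$, since here the decomposable contribution does not vanish: the product $c_{10}(\rho_7)c_{18}(\rho_7)$, whose linear parts $-126x_{20}$ and $\tfrac{1229}{60}x_{36}$ multiply to $x_{20}x_{36}$, feeds into the $x_{20}x_{36}$ coefficient of $\mathcal{P}^1c_{18}(\rho_7)$ with nonzero weight mod $11$. To pin $a$ down I would have to compute the full decomposable part of the Wu expansion of $\mathcal{P}^1c_{18}$, or equivalently push the direct computation through by separating enough monomials of $H^{56}(B\mathrm{Spin}(11))$ mod $11$; either way the value $a\equiv3$ is forced. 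The main obstacle is therefore the careful mod-$p$ bookkeeping of these decomposable Wu terms, the conceptual structure being otherwise routine.
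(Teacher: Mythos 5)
Your proposal is correct and follows essentially the same route as the paper's proof: the paper likewise equates the pullback along $\rho_7$ of the mod $p$ Wu expansion of $\mathcal{P}^1c_{18}$ (resp.\ $\mathcal{P}^1c_{24}$), whose only decomposable terms with nonzero quadratic pullback are $c_{10}c_{18}$ and $c_{14}^2$ (resp.\ $c_{18}^2$), with the direct evaluation of $\mathcal{P}^1$ on the expressions for $c_{18}(\rho_7)$ (resp.\ $c_{24}(\rho_7)$) from Proposition \ref{Chern-1} via Lemma \ref{P^1-E_7}. The single step you defer as the ``main obstacle'' is in fact immediate from the Wu formula of Shay quoted in the paper: the coefficient of $c_{10}c_{18}$ in $\mathcal{P}^1c_{18}$ is $-(17-18)=1$ mod $11$, so $6c_{28}(\rho_7)-8x_{20}x_{36}\equiv-x_{28}^2-x_{20}x_{36}$ and your predicted value $a\equiv 3$ follows at once.
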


\begin{proof}
We only calculate $c_{28}(\rho_7)\mod 11$ since the other case can be similarly calculated. Recall from \cite{S} that there is the mod $p$ Wu formula
\begin{multline*}
\P^1c_k=\sum_{i_1+2i_2+\cdots+ni_n=k+p-1}(-1)^{i_1+\cdots+i_n-1}\frac{(i_1+\cdots+i_n-1)!}{i_1!\cdots i_n!}\\
\times\left(k-1-\frac{\sum_{j=2}^{k-1}(k+p-1-j)i_j}{i_1+\cdots+i_n-1}\right)c_1^{i_1}\cdots c_n^{i_n}
\end{multline*}
in $H^*(B\mathrm{U}(n);\Z/p)$. Then we have $\P^1c_{18}\vartriangleright 6c_{28}+c_{10}c_{18}$ and $6c_{28}(\rho_7)+c_{10}(\rho_7)c_{18}(\rho_7)=6c_{28}(\rho_7)-8x_{20}x_{36}$ by Proposition \ref{Chern-1}.
On the other hand, by Lemma \ref{P^1-E_7} and Proposition \ref{Chern-1}, we have $\P^1c_{18}(\rho_6)\equiv\P^1(\tfrac{1229}{60}x_{36}+\tfrac{601}{24}x_{16}x_{20})\equiv-x_{28}^2-x_{20}x_{36}\mod\widetilde{H}^*(B\E_7;\Z/11)^3$. Then we obtain the desired result.
\end{proof}


\section{Decomposition of representations}

In order to calculate the Samelson products, we will need to identify the homotopy fiber of a stabilized representation of $G$. To this end, we decompose stabilized representations with respect to the mod $p$ decomposition of $G$. Suppose that $G$ is in Table \ref{(G,p)}. Then as in Section 2, there is a homology generating subcomplex $A$ of $G$. We recall some more properties of $A$ that we are going to use. The following property is proved by looking carefully at the construction of Cohen and Neisendorfer \cite{CN}. For a map $f\colon X\to Y$ where $Y$ is a homotopy associative H-space, we denote its extension $\Omega\Sigma X\to Y$ by $\bar{f}$. As in \cite{Th1}, there is a homotopy associative and homotopy commutative H-structure of each $B_i$ satisfying certain properties. We first recall these properties. The product of the above H-structures of $B_i$'s defines an H-structure of $G$ which we call the exotic H-structure since it is different from the standard one in general. By definition, the exotic H-structure of $G$ is homotopy associative.

\begin{proposition}
[Theriault \cite{Th2}]
\label{retraction}
Suppose $(G,p)$ is in Table \ref{(G,p)}. There is a map $r\colon\Omega\Sigma A\to G$ such that:
\begin{enumerate}
\item the composite $G\xrightarrow{E}\Omega\Sigma G\xrightarrow{\Omega t}\Omega\Sigma A\xrightarrow{r}G$ is homotopic to the identity map;
\item for any map $f\colon A\to Z$ into a homotopy associative and homotopy commutative H-space $Z$, there is an $H$-map $g\colon G\to Z$ with respect to the exotic H-structure of $G$ such that $g\circ r\simeq\bar{f}$.
\end{enumerate}
\end{proposition}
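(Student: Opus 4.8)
The plan is to realize $(G,\text{exotic})$ as the homotopy-theoretic object freely generated by $A$ among homotopy-associative, homotopy-commutative $H$-spaces, and to exhibit $r$ as the comparison with the free homotopy-associative $H$-space $\Omega\Sigma A$. First I would set $r:=\bar{\iota}\colon\Omega\Sigma A\to G$, the multiplicative extension of the inclusion $\iota\colon A\to G$ furnished by the James construction together with the homotopy associativity of the exotic $H$-structure; here $\Omega\Sigma A$ serves as the free homotopy-associative $H$-space on $A$, so $\bar{\iota}$ is the unique $H$-map up to homotopy with $\bar{\iota}\circ E_A\simeq\iota$, where $E_A\colon A\to\Omega\Sigma A$ is the canonical inclusion. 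Writing $s:=\Omega t\circ E\colon G\to\Omega\Sigma A$ for the candidate section, the adjunction $[G,\Omega\Sigma A]\cong[\Sigma G,\Sigma A]$ sends $s$ to $t$, whence the adjoint of $s\circ\iota$ is $t\circ\Sigma\iota$, which is homotopic to $\mathrm{id}_{\Sigma A}$ by Theorem \ref{A}(1). Therefore $s\circ\iota\simeq E_A$, and consequently $(r\circ s)\circ\iota\simeq r\circ E_A\simeq\iota$: the self-map $r\circ s$ restricts to the inclusion on the generating subcomplex $A$.

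For assertion (1) it remains to upgrade this agreement on $A$ to a homotopy $r\circ s\simeq\mathrm{id}_G$. Since the diagonal is natural, $(r\circ s)_*$ is a coalgebra endomorphism of $H_*(G)=\Lambda(\widetilde{H}_*(A))$ fixing the generating module $\widetilde{H}_*(A)$; as these generators lie in odd degrees they are precisely the primitives, so $(r\circ s)_*$ fixes $PH_*(G)$. Fixing the primitives of a coalgebra does not, however, force the identity, and the formal obstacle is that $E$ is not an $H$-map—its failure is measured by the suspended reduced product $\Sigma\bar{\mu}$—so neither $s$ nor $r\circ s$ need be multiplicative. I would close this gap with the explicit functorial retraction of Cohen and Neisendorfer \cite{CN}, whose construction produces the section simultaneously with the retraction, so that $r\circ s\simeq\mathrm{id}_G$ holds by design once the section is identified with $\Omega t\circ E$; this identification is exactly the relation $s\circ\iota\simeq E_A$ above, propagated along the James filtration.

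For assertion (2), given $f\colon A\to Z$ with $Z$ homotopy associative and homotopy commutative, I would set $g:=\bar{f}\circ s\colon G\to Z$, where $\bar{f}\colon\Omega\Sigma A\to Z$ is the multiplicative extension of $f$. Restriction along $E_A$ gives $g\circ\iota=\bar{f}\circ s\circ\iota\simeq\bar{f}\circ E_A\simeq f$, so $g$ extends $f$; and then $g\circ r\simeq\bar{f}$ would follow as soon as $g$ is known to be an $H$-map, since in that case $g\circ r$ and $\bar{f}$ are two $H$-maps out of $\Omega\Sigma A$ whose restrictions $(g\circ r)\circ E_A=g\circ\iota$ and $\bar{f}\circ E_A$ both agree with $f$, hence are homotopic by uniqueness of multiplicative extensions. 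Thus the whole content of (2) reduces to the $H$-map property of $g$, and this is where homotopy commutativity of $Z$ enters: although $s$ is not multiplicative, the Cohen--Neisendorfer retraction is natural in maps out of $A$, and post-composing with the multiplicative $\bar{f}$ into a homotopy-commutative target confines the deviation from multiplicativity to commutator maps $A^{\wedge k}\wedge A^{\wedge l}\to Z$ that vanish because $Z$ is homotopy commutative.

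The main obstacle is common to both parts: the naive universal property of $\Omega\Sigma A$ only pins maps down when the relevant comparison maps are already $H$-maps, which $E$ and $s$ are not. Everything therefore hinges on the detailed, functorial form of the Cohen--Neisendorfer retraction and on feeding the homotopy commutativity of the exotic structure (respectively of $Z$) into an induction over the James filtration of $\Omega\Sigma A$ so as to annihilate the successive commutator obstructions. I expect this inductive bookkeeping over the filtration, rather than any single map, to be the technically demanding step.
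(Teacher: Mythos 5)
The first thing to note is that the paper does not prove this proposition: it is quoted from Theriault \cite{Th2}, prefaced only by the remark that it ``is proved by looking carefully at the construction of Cohen and Neisendorfer \cite{CN}.'' So your attempt must be judged on its own. Its verifiable parts are correct: taking $r=\bar{\iota}$, the James extension of the inclusion $\iota\colon A\to G$ with respect to the homotopy associative exotic structure, is the right candidate; the adjointness computation showing $s\circ\iota\simeq E_A$ for $s=\Omega t\circ E$ via $t\circ\Sigma\iota\simeq\mathrm{id}_{\Sigma A}$ (Theorem \ref{A}(1)) is correct; and your coalgebra analysis correctly diagnoses why agreement on $A$ cannot formally be upgraded to $r\circ s\simeq\mathrm{id}_G$, since $(r\circ s)_*$ is only a coalgebra map and may move decomposables by primitives.

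However, both assertions of the proposition are then settled by citation rather than argument, and in (1) the hand-off contains a genuine non sequitur. You claim $r\circ s\simeq\mathrm{id}_G$ ``holds by design once the section is identified with $\Omega t\circ E$,'' and that this identification is ``exactly the relation $s\circ\iota\simeq E_A$, propagated along the James filtration.'' By your own earlier observation, restriction to $A$ pins down a map out of $\Omega\Sigma A$ only when that map is an H-map into a homotopy associative target; here the objects to be compared are sections $G\to\Omega\Sigma A$, i.e.\ maps \emph{into} $\Omega\Sigma A$, for which agreement on $A$ determines essentially nothing, and $r\circ s$ is not an H-map, so neither Proposition \ref{universality} nor James uniqueness applies. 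Identifying the abstract Cohen--Neisendorfer section with the specific composite $\Omega t\circ E$ is precisely the nontrivial content of Theriault's refinement --- it is the reason Theorem \ref{A} and this proposition must be proved together --- and it does not follow from $s\circ\iota\simeq E_A$. Similarly in (2), your reduction of $g\circ r\simeq\bar{f}$ to the H-map property of $g=\bar{f}\circ s$ is logically sound (granting that $r$ is an H-map for the exotic structure and uniqueness of multiplicative extensions off $\Omega\Sigma A$), but that H-map property --- the only place where homotopy commutativity of $Z$ and the specific exotic structure of \cite{Th1} enter --- is supported only by the heuristic that the deviation from multiplicativity ``is confined to commutator maps that vanish.'' Proving that the deviation factors through commutators, by induction over the Cohen--Neisendorfer filtration, is the entire technical content of \cite{Th2}; you acknowledge as much in your closing paragraph. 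In short: the reductions are stated correctly and are consistent with how the result is established in the literature, but at exactly the two points where a proof is required, the proposal substitutes an appeal to \cite{CN}, and in part (1) the appeal rests on an identification of sections that your own argument cannot deliver.
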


\begin{proposition}
[Theriault \cite{Th2}]
\label{universality}
Suppose $(G,p)$ is in Table \ref{(G,p)}. Let $f_1,f_2\colon G\to Y$ be H-maps with respect to the exotic H-structure of $G$, where $Y$ is a homotopy associative and homotopy commutative H-space. Then $f_1\simeq f_2$ if and only if $f_1\vert_A\simeq f_2\vert_A$.
\end{proposition}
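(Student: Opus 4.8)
The forward implication is immediate, since a homotopy $f_1\simeq f_2$ restricts to a homotopy $f_1|_A\simeq f_2|_A$. For the converse the plan is to transport the comparison of $f_1$ and $f_2$ to the James construction $\Omega\Sigma A$, where the universal (free) property of $\Omega\Sigma A$ is available, and then to descend back to $G$ using the retraction of Proposition \ref{retraction}. Write $\iota\colon A\to G$ for the inclusion, $E\colon G\to\Omega\Sigma G$ for the unit of the loop--suspension adjunction, and set $\sigma=\Omega t\circ E\colon G\to\Omega\Sigma A$. By Proposition \ref{retraction}(1) we have $r\circ\sigma\simeq\mathrm{id}_G$, so the assignment $h\mapsto h\circ\sigma$ is a left inverse of $f\mapsto f\circ r$ on homotopy classes; hence $f\mapsto f\circ r$ is injective and it suffices to prove that $f_1\circ r\simeq f_2\circ r$.

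First I would identify the restriction of $r$ to $A$. Let $E_A\colon A\to\Omega\Sigma A$ denote the James inclusion (the unit for $A$). Naturality of the adjunction unit gives $E\circ\iota\simeq\Omega(\Sigma\iota)\circ E_A$, whence $\sigma\circ\iota=\Omega t\circ E\circ\iota\simeq\Omega(t\circ\Sigma\iota)\circ E_A$. Since $t$ is a left homotopy inverse of the inclusion $\Sigma\iota\colon\Sigma A\to\Sigma G$ by Theorem \ref{A}(1), we have $t\circ\Sigma\iota\simeq\mathrm{id}_{\Sigma A}$, so $\sigma\circ\iota\simeq E_A$. Combining this with $r\circ\sigma\simeq\mathrm{id}_G$ yields $r\circ E_A\simeq r\circ\sigma\circ\iota\simeq\iota$, and therefore $(f_i\circ r)\circ E_A\simeq f_i\circ\iota=f_i|_A$ for $i=1,2$.

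The crux is then to compare $f_1\circ r$ and $f_2\circ r$ through the universal property of $\Omega\Sigma A$. Each $f_i$ is an $H$-map by hypothesis, and the retraction $r$ is an $H$-map with respect to the loop multiplication on $\Omega\Sigma A$ and the exotic $H$-structure of $G$; this is the structural input extracted from the Cohen--Neisendorfer construction underlying Proposition \ref{retraction}. Granting it, each composite $f_i\circ r$ is an $H$-map into the homotopy associative $H$-space $Y$. Since $\Omega\Sigma A$ is the free homotopy associative $H$-space generated by $A$, an $H$-map out of it into a homotopy associative $H$-space is determined up to homotopy by its restriction along $E_A$. As $(f_1\circ r)\circ E_A\simeq f_1|_A\simeq f_2|_A\simeq(f_2\circ r)\circ E_A$ by the second paragraph and the hypothesis, we conclude $f_1\circ r\simeq f_2\circ r$, and hence $f_1\simeq f_2$ by the first paragraph.

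I expect the main obstacle to be precisely the assertion that $r$ is an $H$-map, equivalently that $f\circ r$ is an $H$-map for every $H$-map $f\colon G\to Y$. The numbered properties of Proposition \ref{retraction} record only that $r\circ\sigma\simeq\mathrm{id}_G$ and that $H$-maps out of $G$ realizing a prescribed map on $A$ exist, and neither of these by itself upgrades $f\circ r$ to an $H$-map; indeed, trying instead to match $f_i$ with the $H$-map $g$ furnished by Proposition \ref{retraction}(2) (which satisfies $g\circ r\simeq\overline{f_i|_A}$ and $g|_A\simeq f_i|_A$) merely reproduces the very uniqueness we are trying to establish. Thus the homotopy associativity and commutativity of $Y$ and of the exotic $H$-structure, together with the explicit form of $r$ from \cite{Th2,CN}, are what secure the needed $H$-map property; once it is in hand, the remaining steps are the formal adjunction and universal-property manipulations indicated above.
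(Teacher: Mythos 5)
The paper gives no proof of this proposition at all: it is quoted from Theriault \cite{Th2}, so the only meaningful comparison is with the argument in that source, and your proof is essentially that argument. Your reductions are all correct: the injectivity of $f\mapsto f\circ r$ follows from Proposition \ref{retraction}(1); the identification $r\circ E_A\simeq\iota$ follows from naturality of the unit $E$ together with $t\circ\Sigma\iota\simeq\mathrm{id}_{\Sigma A}$ from Theorem \ref{A}(1); and the uniqueness of H-maps out of $\Omega\Sigma A$ into a homotopy associative target with prescribed restriction to $A$ is exactly the ``universality of loop-suspensions'' that this paper itself invokes in the proof of Lemma \ref{exotic}, so it is fair game at the paper's level of rigor. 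You also correctly isolate the one genuine external input: that $r$ is an H-map from the loop multiplication to the exotic multiplication, which is \emph{not} recorded in Proposition \ref{retraction} as stated, and without which properties (1) and (2) alone do not suffice (as you note, applying (2) merely reproduces the uniqueness being sought). This input does hold in the cited construction: the exotic H-structure is homotopy associative and homotopy commutative by \cite{Th1}, so the inclusion $A\to G$ extends via the James construction to an H-map $\Omega\Sigma A\to G$, and the retraction of Cohen--Neisendorfer/Theriault \cite{CN,Th2} is precisely this multiplicative extension; with that supplied, both $f_1\circ r$ and $f_2\circ r$ are H-map extensions of $f_1\vert_A\simeq f_2\vert_A$, hence homotopic, and precomposition with $\Omega t\circ E$ finishes the proof exactly as in \cite{Th2}. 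So your proposal is correct, with the caveat that it rests on a property of $r$ imported from the cited construction rather than from the propositions stated in this paper---a dependency you flag explicitly and accurately.
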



We consider the alternation of the standard H-structure and the exotic H-structure of $G$.

\begin{lemma}
\label{exotic}
Suppose that $(G,p)$ is in Table \ref{(G,p)}. Let $f\colon G\to Z$ be an H-map with respect to the standard H-structure of $G$ where $Z$ is a homotopy associative and homotopy commutative H-space. Then $f$ is also an H-map with respect to the exotic H-structure of $G$.
\end{lemma}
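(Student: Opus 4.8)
The plan is to compare two H-maps built from $f$ and to apply the uniqueness result of Proposition \ref{universality}. The key observation is that both the standard and exotic H-structures of $G$ are homotopy associative and homotopy commutative H-spaces after $p$-localization (this is precisely the content of the mod $p$ decomposition together with the H-structures of the $B_i$ recalled just before Proposition \ref{retraction}), and that Proposition \ref{universality} detects homotopies between H-maps (with respect to the exotic structure) by their restriction to $A$. So rather than manipulate multiplications directly, I would reduce the assertion to a statement about restrictions to $A$.

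More precisely, let $G^{\mathrm{ex}}$ and $G^{\mathrm{st}}$ denote $G$ equipped with the exotic and standard H-structures. I first observe that the identity map of the underlying space $G$ need not be an H-map between these two structures, so I cannot simply precompose. Instead I would argue as follows. Consider the two maps $f, f\colon G^{\mathrm{ex}}\to Z$; I want to show the single map $f$ is an H-map with respect to the exotic structure, i.e. that the two composites
$$
G^{\mathrm{ex}}\times G^{\mathrm{ex}}\xrightarrow{\mu_{\mathrm{ex}}}G\xrightarrow{f}Z
\quad\text{and}\quad
G^{\mathrm{ex}}\times G^{\mathrm{ex}}\xrightarrow{f\times f}Z\times Z\xrightarrow{\mu_Z}Z
$$
agree. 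The natural route is to produce, from the hypothesis that $f$ is an H-map for $\mu_{\mathrm{st}}$, a genuine H-map $G^{\mathrm{ex}}\to Z$ whose restriction to $A$ coincides with $f|_A$, and then invoke Proposition \ref{universality} to conclude that this H-map is homotopic to $f$ itself, which forces $f$ to be an H-map for the exotic structure.

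To construct such an H-map I would use Proposition \ref{retraction}(2) applied to the composite $A\hookrightarrow G\xrightarrow{f}Z$: since $Z$ is homotopy associative and homotopy commutative, there is an H-map $g\colon G^{\mathrm{ex}}\to Z$ with $g\circ r\simeq\overline{f|_A}$, the multiplicative extension of $f|_A$ through $\Omega\Sigma A$. The key point is then to identify the restriction $g|_A$ with $f|_A$. Using Proposition \ref{retraction}(1), the restriction of $r$ along the bottom inclusion recovers the inclusion $A\hookrightarrow G$ up to the standard suspension/extension maps, so that $g|_A\simeq \overline{f|_A}|_A\simeq f|_A$. At this stage I have two H-maps with respect to the exotic structure, namely $g$ and (if I can show $f$ is one of them) $f$, whose restrictions to $A$ agree; but I have not yet shown $f$ is an H-map, so I must instead compare $g$ directly with $f$ as \emph{maps of underlying spaces} using that both restrict to $f|_A$ on $A$ together with the fact that $f$ is already an H-map for the standard structure.

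The main obstacle, and the step requiring the most care, is precisely this last identification: I must show $g\simeq f$ as maps $G\to Z$, and then transport the H-map property of $g$ back to $f$. The cleanest way is to show that $f\colon G^{\mathrm{st}}\to Z$ being an H-map already makes $f$ factor multiplicatively through $r$ in the sense that $f\circ r\simeq\overline{f|_A}$, since $r$ is assembled from the standard multiplication via Proposition \ref{retraction}(1); granting this, both $f$ and $g$ are H-maps out of $G^{\mathrm{ex}}$ (indeed $g$ is by construction, and $f\circ r\simeq g\circ r$ forces $f\simeq g$ after composing with the section from Proposition \ref{retraction}(1)) restricting to the same map on $A$, so Proposition \ref{universality} yields $f\simeq g$ and hence $f$ inherits the exotic H-map property. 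The delicate part is verifying that the standard H-map hypothesis on $f$ is exactly what is needed to get $f\circ r\simeq\overline{f|_A}$, which hinges on how $r$ is built from the multiplication in Proposition \ref{retraction}; I expect this to require unwinding the construction of Cohen–Neisendorfer referenced before Proposition \ref{retraction}, rather than any further homotopy-theoretic input.
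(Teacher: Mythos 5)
Your overall strategy --- produce an exotic H-map $g$ via Proposition \ref{retraction}(2) with $g\circ r\simeq\overline{f\vert_A}$, show $f\simeq g$ as maps of spaces, and transport the H-map property across the homotopy --- is the right one, and you correctly spotted that Proposition \ref{universality} cannot be applied to $f$ before knowing $f$ is an exotic H-map (your final re-invocation of it is therefore redundant: once $f\simeq g$ is forced by composing with the section $\Omega t\circ E$, you are done). But the proof has a genuine gap at exactly the step you flag: the claim $f\circ r\simeq\overline{f\vert_A}$. Your proposed justification --- that ``$r$ is assembled from the standard multiplication via Proposition \ref{retraction}(1)'' --- rests on a false premise. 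Proposition \ref{retraction}(1) only says $r\circ\Omega t\circ E\simeq\mathrm{id}_G$ and says nothing about how $r$ is built; in the Cohen--Neisendorfer/Theriault construction $r$ is the multiplicative extension of the inclusion $A\to G$ with respect to the \emph{exotic} structure (this is what makes Proposition \ref{retraction}(2) work), so the hypothesis that $f$ is an H-map for the \emph{standard} structure gives no control over $f\circ r$. Indeed $f\circ r\simeq\overline{f\vert_A}$ is true only a posteriori --- given retraction(1) it is essentially equivalent to the lemma --- so deferring it to ``unwinding Cohen--Neisendorfer'' leaves the argument circular rather than merely incomplete.

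The paper closes this gap with one extra idea you are missing: introduce the \emph{second} extension $h\colon\Omega\Sigma A\to G$ of the inclusion $A\hookrightarrow G$, formed with respect to the \emph{standard} H-structure. Because $f$ is a standard H-map, $f\circ h$ is an H-map out of $\Omega\Sigma A$ restricting to $f\vert_A$, so universality of the James construction gives $f\circ h\simeq\overline{f\vert_A}$ with no unwinding of the construction of $r$ at all. The counterpart of retraction(1) for $h$ comes from Theorem \ref{A}(2): taking adjoints of the factorization $\Sigma G\to BG$ through $t$ shows $h\circ\Omega t\circ E\simeq\mathrm{id}_G$. Combining, one gets
$$f\simeq f\circ h\circ\Omega t\circ E\simeq\overline{f\vert_A}\circ\Omega t\circ E\simeq g\circ r\circ\Omega t\circ E\simeq g,$$
using Proposition \ref{retraction}(2) for the third homotopy and \ref{retraction}(1) for the last. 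In short: your reduction is sound, but the standard-structure extension $h$ (together with Theorem \ref{A}(2)) is the device that replaces your unproven claim about $f\circ r$, and without it the proof does not close.
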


\begin{proof}
Let $h\colon\Omega\Sigma A\to G$ be the extension of the inclusion, where we consider the standard H-structure of $G$. Then since $f$ is an H-map, the composite $f\circ h$ is also an H-map which restricts to $f\vert_A$. Then by universality of loop-suspensions, we get $f\circ h\simeq\overline{f\vert_A}$. By Theorem \ref{A}, the composite $G\xrightarrow{E}\Omega\Sigma G\xrightarrow{\Omega t}\Omega\Sigma A\xrightarrow{h}G$ is homotopic to the identity map of $G$, so the composite $G\xrightarrow{E}\Omega\Sigma G\xrightarrow{\Omega t}\Omega\Sigma A\xrightarrow{\overline{f\vert_A}}Z$ is homotopic to $f$. On the other hand, by Proposition \ref{retraction}, there is an H-map $g\colon G\to Z$ with respect to the exotic H-structure of $G$ such that the composite $\Omega\Sigma A\xrightarrow{r}G\xrightarrow{g}Z$ is homotopic to $\overline{f\vert_A}$. Then we obtain
$$f
\simeq\overline{f\vert_A}\circ\Omega t\circ E\simeq g\circ r\circ\Omega t\circ E\simeq g,$$
completing the proof. 
\end{proof}

Let $\SU(\infty)\simeq C_1\times\cdots\times C_{p-1}$ be the mod $p$ decomposition such that $\pi_*(C_k)=0$ for $*\not\equiv 2k+1\mod 2(p-1)$. Let $\widetilde{C}_k$ and $\widetilde{\SU}(\infty)$ be the 3-connective covers of $C_k$ and $\SU(\infty)$, respectively. Then we have $\widetilde{SU}(\infty)\simeq\widetilde{C}_1\times\cdots\times\widetilde{C}_{p-1}$ and there is a homotopy equivalence $\SU(\infty)\xrightarrow{\simeq}\Omega^2\widetilde{SU}(\infty)$ which is a loop map. We now decompose an H-map $\rho\colon G\to\SU(\infty)$ with respect to the mod $p$ decompositions of $G$ and $\SU(\infty)$. Define a map $\rho^k\colon B_i\to\Omega^2\widetilde{C}_k$ by the composite
$$B_k\xrightarrow{\rm incl}G\xrightarrow{\rho}\SU(\infty)\xrightarrow{\simeq}\Omega^2\widetilde{\SU}(\infty)\xrightarrow{\rm proj}\Omega^2\widetilde{C}_k.$$

\begin{lemma}
\label{rho^i}
Suppose that $(G,p)$ is in Table \ref{(G,p)}. For an H-map $\rho\colon G\to\SU(\infty)$ with respect to the standard H-structure of $G$, the composite
$$G\simeq B_1\times\cdots\times B_{p-1}\xrightarrow{\rho^1\times\cdots\times\rho^{p-1}}\Omega^2\widetilde{C}_1\times\cdots\times\Omega^2\widetilde{C}_{p-1}\simeq\SU(\infty)$$
is an H-map with respect to the exotic H-structure of $G$.
\end{lemma}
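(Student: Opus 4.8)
The plan is to show that the product map in the statement, call it $\widetilde{\rho}=(\rho^1\times\cdots\times\rho^{p-1})$ transported through the two mod $p$ decompositions, agrees with $\rho$ itself and then invoke Lemma \ref{exotic}. The cleanest route is to exploit the universality statement of Proposition \ref{universality}: since both $\rho$ and $\widetilde{\rho}$ are candidate H-maps $G\to\SU(\infty)$ into the homotopy associative and homotopy commutative H-space $\SU(\infty)$, it suffices to compare their restrictions to the homology generating subcomplex $A=A_1\vee\cdots\vee A_{p-1}$, provided we first know both maps are H-maps with respect to the exotic H-structure.

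First I would check that $\widetilde{\rho}$ is an H-map with respect to the exotic H-structure. This is where the exotic structure is designed to help: by construction $G\simeq B_1\times\cdots\times B_{p-1}$ carries the exotic H-structure as the product of the individual H-structures on the $B_k$ (each chosen homotopy associative and homotopy commutative as recalled before Proposition \ref{retraction}), and likewise $\SU(\infty)\simeq\Omega^2\widetilde{C}_1\times\cdots\times\Omega^2\widetilde{C}_{p-1}$ is a product of H-spaces. A map built as a product $\prod_k\rho^k$ of maps between the respective factors is automatically an H-map for these product H-structures, so $\widetilde{\rho}$ is an exotic H-map essentially by formal nonsense about products; the only point to verify is that the target H-structure on $\SU(\infty)$ coming from the factorwise loop-product agrees up to homotopy with its usual loop-space H-structure, which follows since the equivalence $\SU(\infty)\xrightarrow{\simeq}\Omega^2\widetilde{\SU}(\infty)$ is a loop map and the mod $p$ decomposition of $\SU(\infty)$ is compatible with this loop structure.

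Next I would handle $\rho$ itself. By hypothesis $\rho$ is an H-map for the \emph{standard} H-structure of $G$, so Lemma \ref{exotic} immediately upgrades it to an H-map for the exotic H-structure. Thus both $\rho$ and $\widetilde{\rho}$ are exotic H-maps into a homotopy associative and homotopy commutative H-space, and Proposition \ref{universality} reduces the claim $\widetilde{\rho}\simeq\rho$ to the restriction $\widetilde{\rho}\vert_A\simeq\rho\vert_A$. On each wedge summand $A_k\subset B_k$ the restriction of $\rho$ projects, under $\SU(\infty)\simeq\Omega^2\widetilde{\SU}(\infty)$, to its components in the various $\Omega^2\widetilde{C}_m$; but the very definition of $\rho^k$ records the component landing in $\Omega^2\widetilde{C}_k$, and for $m\neq k$ a connectivity/degree argument using $\pi_*(C_m)=0$ for $*\not\equiv 2m+1\bmod 2(p-1)$ forces the other components on $A_k$ to be null (the cells of $A_k$ sit in dimensions $\equiv 2k-1\bmod 2(p-1)$). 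Hence $\widetilde{\rho}\vert_{A_k}\simeq\rho\vert_{A_k}$ for every $k$, so $\widetilde{\rho}\vert_A\simeq\rho\vert_A$ and therefore $\widetilde{\rho}\simeq\rho$ by universality.

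I expect the main obstacle to be the bookkeeping in the first step: precisely matching the exotic product H-structure on $G$ and the factorwise H-structure on $\SU(\infty)\simeq\Omega^2\widetilde{\SU}(\infty)$ so that $\widetilde{\rho}$ is genuinely an exotic H-map, rather than merely an H-map for some ad hoc product structure. Once both maps are known to be exotic H-maps, the comparison on $A$ is a routine connectivity argument and Proposition \ref{universality} does the rest.
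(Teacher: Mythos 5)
There is a genuine gap, and it sits exactly at the point the lemma is about. You assert that $\widetilde{\rho}=\rho^1\times\cdots\times\rho^{p-1}$ is ``automatically an H-map for these product H-structures\ldots essentially by formal nonsense about products.'' That is false for arbitrary component maps: with respect to product multiplications the H-map condition for $\prod_k\rho^k$ decomposes componentwise, so $\prod_k\rho^k$ is an H-map if and only if \emph{each} $\rho^k\colon B_k\to\Omega^2\widetilde{C}_k$ is an H-map, and a random map between H-spaces need not be one (most self-maps of $S^3$ already fail). Verifying that each $\rho^k$ is an H-map is precisely the content of the paper's (one-line) proof: $\rho^k$ is by definition the composite of the inclusion $B_k\to G$ (an H-map for the exotic structure, since that structure is defined as the product of the chosen structures on the $B_i$'s), the map $\rho$ (an H-map for the exotic structure by Lemma \ref{exotic}), the equivalence $\SU(\infty)\simeq\Omega^2\widetilde{\SU}(\infty)$ (a loop map), and the projection onto $\Omega^2\widetilde{C}_k$ (an H-map for the loop multiplications). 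So Lemma \ref{exotic} must be invoked already in your first step, inside the definition of $\rho^k$, not only for $\rho$ itself in your second step; the ``only point to verify'' you single out (compatibility of the factorwise structure on $\Omega^2\widetilde{\SU}(\infty)$ with the usual one) is correct but is not the missing point.

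Once this is repaired, everything after your first paragraph is superfluous for the lemma: the statement claims only that $\widetilde{\rho}$ is an H-map for the exotic structure, not that $\widetilde{\rho}\simeq\rho$. Your comparison of $\widetilde{\rho}$ with $\rho$ via Proposition \ref{universality}, together with the connectivity argument on the summands $A_k$ using $\pi_*(C_m)=0$ for $*\not\equiv 2m+1\bmod 2(p-1)$, is in substance the paper's proof of the \emph{next} result, Theorem \ref{decomp-rho}, and it cannot be used to establish the lemma without circularity: Proposition \ref{universality} requires both maps to be known exotic H-maps beforehand, as you yourself note. In short, your second and third paragraphs prove the right theorem in the right way, but the lemma itself rests entirely on the step you dismissed as formal nonsense.
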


\begin{proof}
By Lemma \ref{exotic}, the map $\rho$ is an H-map with respect to the exotic H-structure of $G$, and by definition, the inclusion $B_k\to G$ is an H-map with respect to the exotic H-structure of $G$. Then by definition, each $\rho^k$ is an H-map.
\end{proof}

\begin{theorem}
\label{decomp-rho}
Suppose that $(G,p)$ is in Table \ref{(G,p)}. If $\rho\colon G\to\SU(\infty)$ is an H-map with respect to the standard H-structure, then $\rho\simeq\rho^1\times\cdots\times\rho^{p-1}$.
\end{theorem}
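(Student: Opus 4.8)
The plan is to reduce the statement to a comparison of the two maps on the homology generating complex $A$, where the universality of Proposition \ref{universality} applies, and then to kill the off-diagonal components by a dimension count. Write $\phi=\rho^1\times\cdots\times\rho^{p-1}\colon G\to\SU(\infty)$. Both $\rho$ and $\phi$ are H-maps of $G$ with respect to the \emph{exotic} H-structure into the homotopy associative and homotopy commutative H-space $\SU(\infty)\simeq\Omega^2\widetilde{\SU}(\infty)$: for $\rho$ this is Lemma \ref{exotic}, and for $\phi$ it is exactly Lemma \ref{rho^i}. Hence by Proposition \ref{universality} it suffices to prove $\rho\vert_A\simeq\phi\vert_A$. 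Since $A=A_1\vee\cdots\vee A_{p-1}$ and $[\,\bigvee_k A_k,\SU(\infty)]\cong\prod_k[A_k,\SU(\infty)]$, this reduces to showing $\rho\vert_{A_k}\simeq\phi\vert_{A_k}$ for each $k$, where the relevant inclusion is the composite $A_k\hookrightarrow B_k\hookrightarrow B_1\times\cdots\times B_{p-1}=G$ of the $k$-th factor inclusion.

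Next I would compare the two restrictions coordinate-wise under $\SU(\infty)\simeq\Omega^2\widetilde{C}_1\times\cdots\times\Omega^2\widetilde{C}_{p-1}$. Because $A_k$ lands in the $k$-th factor $B_k$ of $G$, the map $\phi\vert_{A_k}$ has trivial projection to $\Omega^2\widetilde{C}_j$ for $j\ne k$ and projection $\rho^k\vert_{A_k}$ to $\Omega^2\widetilde{C}_k$; that is, $\phi\vert_{A_k}$ is $\rho^k\vert_{A_k}$ followed by the $k$-th factor inclusion. On the other hand, the projection of $\rho\vert_{A_k}$ to $\Omega^2\widetilde{C}_k$ equals $\rho^k\vert_{A_k}$ by the very definition of $\rho^k$, so the two restrictions already agree in the $k$-th coordinate. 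Thus the theorem comes down to showing that for every $j\ne k$ the off-diagonal composite
$$A_k\hookrightarrow B_k\xrightarrow{\rm incl}G\xrightarrow{\rho}\SU(\infty)\xrightarrow{\rm proj}\Omega^2\widetilde{C}_j$$
is null-homotopic.

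The main point, and the step requiring the most care, is this vanishing, which I expect to follow from a mismatch of dimensions modulo $2(p-1)$. The complex $A_k$ is built from cells in dimensions $\equiv 2k-1\pmod{2(p-1)}$ (in the cases at hand, $2k-1$ and possibly $2k-1+2(p-1)$). Since $\pi_*(C_j)=0$ for $*\not\equiv 2j+1\pmod{2(p-1)}$ and $\widetilde{C}_j$ is the $3$-connective cover of $C_j$, the double loop space $\Omega^2\widetilde{C}_j$ has homotopy concentrated in dimensions $*\equiv 2j-1\pmod{2(p-1)}$ — the crucial bookkeeping being that looping twice shifts the residue $2j+1$ of $C_j$ down to $2j-1$, so that the diagonal index matches $A_k$ precisely when $j=k$. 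For $j\ne k$ with $j,k\in\{1,\dots,p-1\}$ we have $2k-1\not\equiv 2j-1\pmod{2(p-1)}$, and also $2k-2\not\equiv 2j-1$ for parity reasons; hence for every cell of $A_k$ of dimension $d$ both obstruction groups $\pi_{d-1}(\Omega^2\widetilde{C}_j)$ and $\pi_{d}(\Omega^2\widetilde{C}_j)$ vanish. A cellular induction (equivalently, the triviality of the group $[A_k,\Omega^2\widetilde{C}_j]$) then shows the off-diagonal composite is null-homotopic. Combining this with the coordinate-wise analysis gives $\rho\vert_{A_k}\simeq\phi\vert_{A_k}$ for all $k$, hence $\rho\vert_A\simeq\phi\vert_A$, and Proposition \ref{universality} completes the proof.
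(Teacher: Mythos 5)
Your proposal is correct and takes essentially the same route as the paper's proof: both $\rho$ and $\rho^1\times\cdots\times\rho^{p-1}$ are H-maps for the exotic H-structure by Lemmas \ref{exotic} and \ref{rho^i}, and Proposition \ref{universality} reduces everything to comparing the restrictions to $A$. The only difference is that you make explicit, via the congruence count modulo $2(p-1)$, why the off-diagonal composites $A_k\to\Omega^2\widetilde{C}_j$ ($j\ne k$) are null-homotopic, a point the paper leaves implicit when it asserts $\rho\vert_A\simeq(\rho^1\vert_{A_1})\vee\cdots\vee(\rho^{p-1}\vert_{A_{p-1}})$.
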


\begin{proof}
By Lemma \ref{exotic} and \ref{rho^i}, the maps $\rho$ and $\rho^1\times\cdots\times\rho^{p-1}$ are H-maps with respect to the exotic H-structure of $G$. Then for $\rho\vert_A\simeq(\rho^1\vert_{A_1})\vee\cdots\vee(\rho^{p-1}\vert_{A_{p-1}})\simeq(\rho^1\times\cdots\times\rho^{p-1})\vert_A$, the proof is done by Proposition \ref{universality}.
\end{proof}

\begin{corollary}
\label{decomp-hofib}
Suppose that $(G,p)$ is in Table \ref{(G,p)}. If $\rho\colon G\to\SU(\infty)$ is an H-map with respect to the standard H-structure, then
$$\hofib(\rho)\simeq\hofib(\rho^1)\times\cdots\times\hofib(\rho^{p-1}).$$
\end{corollary}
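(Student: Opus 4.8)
The plan is to reduce the statement to the formal fact that the homotopy fiber of a product of maps splits as the product of the individual homotopy fibers, and to import the decomposition of $\rho$ itself directly from Theorem \ref{decomp-rho}. The substantive content has already been done in that theorem, so the present corollary should be a short formal consequence.

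First I would appeal to Theorem \ref{decomp-rho}: since $(G,p)$ is in Table \ref{(G,p)} and $\rho$ is an H-map with respect to the standard H-structure of $G$, there is a homotopy $\rho\simeq\rho^1\times\cdots\times\rho^{p-1}$, where the product is formed through the chosen mod $p$ decomposition equivalences $G\simeq B_1\times\cdots\times B_{p-1}$ and $\SU(\infty)\simeq\Omega^2\widetilde{C}_1\times\cdots\times\Omega^2\widetilde{C}_{p-1}$. Because the homotopy fiber of a map depends only on the homotopy class of that map (homotopic maps have homotopy equivalent homotopy fibers), it suffices to compute the homotopy fiber of the product map $\rho^1\times\cdots\times\rho^{p-1}$.

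Then I would invoke the standard fact that for maps $f_k\colon X_k\to Y_k$ with $k=1,\ldots,n$, the homotopy fiber of $f_1\times\cdots\times f_n\colon X_1\times\cdots\times X_n\to Y_1\times\cdots\times Y_n$ is naturally homotopy equivalent to $\hofib(f_1)\times\cdots\times\hofib(f_n)$; this holds because the homotopy fiber is a finite homotopy limit and finite homotopy limits commute with finite products. Applying this with $f_k=\rho^k$ gives $\hofib(\rho^1\times\cdots\times\rho^{p-1})\simeq\hofib(\rho^1)\times\cdots\times\hofib(\rho^{p-1})$, and combining with the previous paragraph yields the claim. I do not expect a genuine obstacle here: the only point warranting care is that the product decomposition of $\rho$ be taken through exactly the same equivalences used to define each $\rho^k$, so that the factorwise homotopy fibers coincide with $\hofib(\rho^k)$; this compatibility is built into the statement of Theorem \ref{decomp-rho}.
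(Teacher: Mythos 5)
Your proposal is correct and matches the paper's intent exactly: the paper states this corollary without proof, treating it as the immediate formal consequence of Theorem \ref{decomp-rho} together with the standard fact that the homotopy fiber of a product map is the product of the homotopy fibers, which is precisely your argument. Your added remark about using the same decomposition equivalences that define the $\rho^k$ is the right point of care and is indeed built into the theorem's statement.
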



Put $d_k=k-1+r_k(p-1)$ for $k\in t_p(G)$. Similarly to $B_k$, we denote the factors of $\Omega^2\widetilde{\SU}(\infty)$ and $\rho$ corresponding to $k\in t_p(G)$ by $\Omega^2\widetilde{C}_k$ and $\rho^k$ respectively.

\begin{proposition}
\label{hofib-cohomology}
Suppose that $\rho^k\colon B_k\to\Omega^2\widetilde{C}_k$ is an isomorphism in cohomology of dimension $<2k-1+2r_k(p-1)$, then the cohomology of $\hofib(\rho^k)$ is given by
$$\widetilde{H}^*(\hofib(\rho^k))=\langle a_{2d_k},a_{2d_k+2(p-1)}\rangle\quad\text{for}\quad*<2d_k+4(p-1)$$
such that $a_{2n}$ transgresses to the suspension of $c_{n+1}$ modulo decomposables.
\end{proposition}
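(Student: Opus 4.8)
The plan is to run the cohomology Serre spectral sequence of the homotopy fibration $\hofib(\rho^k)\to B_k\xrightarrow{\rho^k}\Omega^2\widetilde{C}_k$, taking $\Omega^2\widetilde{C}_k$ as the base. Since we already know both $H^*(B_k)$ and $H^*(\Omega^2\widetilde{C}_k)$ together with the map $(\rho^k)^*$ between them, we only have to read off $H^*(\hofib(\rho^k))$ from the requirement that the spectral sequence converge to $H^*(B_k)$. Working with $\Z/p$ coefficients, a Koszul-type collapse makes this essentially equivalent to the Eilenberg--Moore computation $\mathrm{Tor}_{H^*(\Omega^2\widetilde{C}_k)}(H^*(B_k),\Z/p)\Rightarrow H^*(\hofib(\rho^k))$, which I would mention as a cross-check.

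First I would record the two cohomology rings. Because $\SU(\infty)\simeq\Omega^2\widetilde{\SU}(\infty)$ is a loop map compatible with the mod $p$ decomposition, $\Omega^2\widetilde{C}_k\simeq C_k$, so $H^*(\Omega^2\widetilde{C}_k)$ is the exterior algebra $\Lambda(x_{2k-1},x_{2k-1+2(p-1)},x_{2k-1+4(p-1)},\dots)$ on generators in degrees $2k-1+2j(p-1)$ for $j\ge 0$, where the generator in degree $2(m)-1$ is the suspension $\Sigma c_{m}$ of the Chern class modulo decomposables. On the other hand $B_k$ is a sphere or a space $B(2k-1,2k-1+2(p-1))$ of rank $r_k$, so $H^*(B_k)$ is exterior on the $r_k$ generators in degrees $2k-1,\dots,2k-1+2(r_k-1)(p-1)$. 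Since $(\rho^k)^*$ is an isomorphism below degree $2k-1+2r_k(p-1)=2d_k+1$, it identifies $H^*(B_k)$ with the exterior subalgebra $A$ on the first $r_k$ generators, and it sends each remaining generator $x_{2k-1+2j(p-1)}$ with $j\ge r_k$, lying above the isomorphism range, to zero for degree reasons.

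Next I would analyse the spectral sequence. Write $H^*(\Omega^2\widetilde{C}_k)=A\otimes B$ with $B=\Lambda(x_{2k-1+2j(p-1)}:j\ge r_k)$ the generators outside the image. The subalgebra $A$ maps isomorphically onto $H^*(B_k)$, so its classes are permanent cycles detected on the base edge and contribute nothing to the fiber. Each generator of $B$ is not in the image, so for the spectral sequence to converge to $H^*(B_k)=A$ it must be transgressively killed; that is, $x_{2k-1+2j(p-1)}$ is the transgression of a class $a_{2d_k+2(j-r_k)(p-1)}\in\widetilde{H}^*(\hofib(\rho^k))$ one degree lower. Equivalently, the fiber cohomology is the divided power algebra $\Gamma(a_{2d_k},a_{2d_k+2(p-1)},\dots)=\mathrm{Tor}_B(\Z/p,\Z/p)$ on transgressive generators in degrees $2d_k,2d_k+2(p-1),\dots$. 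Restricting to $*<2d_k+4(p-1)$ leaves the two generators $a_{2d_k}$ and $a_{2d_k+2(p-1)}$, and by construction $a_{2d_k}$ transgresses to $x_{2d_k+1}=\Sigma c_{d_k+1}$ and $a_{2d_k+2(p-1)}$ to $\Sigma c_{d_k+p}$ modulo decomposables, which is exactly the rule that $a_{2n}$ transgresses to $\Sigma c_{n+1}$ for $n=d_k$ and $n=d_k+(p-1)$.

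The step I expect to be the main obstacle is this backward reading of the Serre spectral sequence: one must argue that the only route to convergence onto $H^*(B_k)$ is through transgressions on the non-image exterior generators, thereby excluding spurious fiber classes and confirming that each transgression hits the Chern-class generator rather than a decomposable of $H^*(\Omega^2\widetilde{C}_k)$. A secondary bookkeeping point is that when $r_k=1$ and $k<p$ one has $d_k<2(p-1)$, so the divided power $\gamma_2(a_{2d_k})$, of degree $4d_k$, falls below $2d_k+4(p-1)$; this class is not transgressive and is irrelevant to the detection of Samelson products, so it does not affect the two generators asserted, but it should be acknowledged when phrasing the range.
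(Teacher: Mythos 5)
Your argument is correct, and it is essentially \emph{the} intended argument: the paper states Proposition \ref{hofib-cohomology} with no proof at all, treating the backward reading of the Serre spectral sequence of $\hofib(\rho^k)\to B_k\xrightarrow{\rho^k}\Omega^2\widetilde{C}_k$ as standard, and your outline fills it in correctly. The exclusion step you flag as the main obstacle goes through by the same parity argument you already used for the image of $(\rho^k)^*$: in the degrees $2d_k+1$ and $2d_k+2(p-1)+1$ the base cohomology is free of rank one on the exterior generator (a decomposable would need a product of three odd generators, forcing $2k-1\equiv 0\bmod (p-1)$, impossible since $p-1$ is even), the fiber is $(2d_k-1)$-connected because $(\rho^k)^*$ is an isomorphism below $2d_k+1$ between torsion-free rings, and then an induction over the range pins down each group and shows the transgression hits the generator exactly, since $H^*(B_k)$ vanishes in the relevant total degrees. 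Two remarks. First, $\Omega^2\widetilde{C}_k\simeq C_k$ is an index slip: $\pi_*(C_k)$ is concentrated in degrees $\equiv 2k+1$ while $\pi_*(\Omega^2\widetilde{C}_k)$ is concentrated in degrees $\equiv 2k-1\bmod 2(p-1)$; but what you actually use --- that $H^*(\Omega^2\widetilde{C}_k)$ is exterior on suspended Chern classes in degrees $2k-1+2j(p-1)$, $j\ge 0$, as a Hopf algebra retract of $H^*(\SU(\infty))$ under the paper's reindexing of the factors --- is correct and is all that is needed. Second, your closing caveat is a genuine observation about the paper's statement, not a defect of your proof: when $r_k=1$ and $k<p$ the degree $4d_k$ lies below $2d_k+4(p-1)$, and the class there is nonzero even with $\Z_{(p)}$ coefficients (rationally the fiber has polynomial cohomology in the range, which the proof of Corollary \ref{hofib-homotopy} implicitly requires, since $K(\Z_{(p)},2d_k)$ carries the corresponding rational square); this situation actually occurs in the applications, e.g.\ $k(2,8)=10$ for $\E_7$ at $p=11$, where $4d_k=76<78=2d_k+4(p-1)$, and also for $k=8,12$ in $\E_7$ at $p=13$ and $k=14$ in $\E_8$ at $p=19$. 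So the displayed equality should be read as algebra generation (or the range tightened), exactly as you say, and nothing downstream is affected because $\dim X\le 2d_k+2(p-1)<4d_k$ in every case where the proposition is applied.
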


Define a map 
$$\Phi_k=a_{2d_k}\oplus a_{2d_k+2(p-1)}\colon[X,\hofib(\rho^k)]\to H^{2d_k}(X)\oplus H^{2d_k+2(p-1)}(X).$$

\begin{corollary}
\label{hofib-homotopy}
Assume the condition of Proposition \ref{hofib-cohomology}. If $\dim X<2d_k+4(p-1)$ and $[X,\hofib(\rho^k)]$ is a free $\Z_{(p)}$-module, then the map $\Phi_k$ is an injective homomorphism.
\end{corollary}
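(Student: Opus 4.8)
The plan is to deduce Corollary \ref{hofib-homotopy} directly from Proposition \ref{hofib-cohomology} by interpreting the cohomology classes $a_{2d_k}$ and $a_{2d_k+2(p-1)}$ as homotopy classes of maps and showing that they assemble into the claimed injective homomorphism. First I would observe that, by Proposition \ref{hofib-cohomology}, the two generators $a_{2d_k}$ and $a_{2d_k+2(p-1)}$ of $\widetilde{H}^*(\hofib(\rho^k))$ in the range $*<2d_k+4(p-1)$ may be regarded, up to the relevant dimension, as maps $\hofib(\rho^k)\to K(\Z_{(p)},2d_k)$ and $\hofib(\rho^k)\to K(\Z_{(p)},2d_k+2(p-1))$ respectively. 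Precomposition with a map $X\to\hofib(\rho^k)$ then defines the pair $\Phi_k=a_{2d_k}\oplus a_{2d_k+2(p-1)}$, and since cohomology with $\Z_{(p)}$ coefficients is additive and each $a_{2n}$ is primitive (it is a genuine cohomology class rather than a higher operation), the map $\Phi_k$ is automatically a homomorphism of abelian groups. So the content of the statement is entirely the injectivity.

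For injectivity I would argue as follows. Suppose $f,g\colon X\to\hofib(\rho^k)$ satisfy $\Phi_k(f)=\Phi_k(g)$, i.e. $f$ and $g$ agree on $a_{2d_k}$ and $a_{2d_k+2(p-1)}$ in $H^*(X)$. The key input is the hypothesis that $[X,\hofib(\rho^k)]$ is a free $\Z_{(p)}$-module together with the dimension bound $\dim X<2d_k+4(p-1)$, which restricts $X$ to the cohomological range in which $\widetilde{H}^*(\hofib(\rho^k))$ is exactly the free graded module on $a_{2d_k}$ and $a_{2d_k+2(p-1)}$. I would pass to a minimal or rational-type model: because the target has, in the relevant range, cohomology concentrated in precisely these two degrees, a map out of $X$ is detected (in this range) by its effect on these two classes, provided the homotopy set is torsion-free. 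Concretely, I would build the obstruction/detection argument through the Postnikov tower of $\hofib(\rho^k)$ truncated below dimension $2d_k+4(p-1)$: the only nonzero homotopy-relevant stages in this range correspond to $a_{2d_k}$ and $a_{2d_k+2(p-1)}$, and the freeness of $[X,\hofib(\rho^k)]$ guarantees that no torsion k-invariants can produce two distinct classes agreeing on both detecting cohomology classes.

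More precisely, the clean way to finish is via the universal coefficient / representability picture: evaluation against $a_{2d_k}$ and $a_{2d_k+2(p-1)}$ factors $\Phi_k$ through the edge homomorphism of the Atiyah--Hirzebruch-type filtration on $[X,\hofib(\rho^k)]$ coming from the skeletal filtration of $X$. In the range $\dim X<2d_k+4(p-1)$ this filtration has at most two nonzero associated graded pieces, namely $H^{2d_k}(X)$ and $H^{2d_k+2(p-1)}(X)$, onto which $\Phi_k$ surjects by construction and relative to which $\Phi_k$ is the identity on associated graded. A homomorphism of $\Z_{(p)}$-modules that is an isomorphism on the associated graded of an exhaustive finite filtration is injective whenever the filtered module is free (so that the extension problem carries no torsion ambiguity); this is exactly where the freeness hypothesis is used, to rule out a nonzero element in the kernel coming from a nontrivial extension between the two filtration quotients. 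Hence $\Phi_k$ is injective.

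The main obstacle, and the step I would expect to require the most care, is the extension argument in the last paragraph: a priori the filtration on $[X,\hofib(\rho^k)]$ could have a nonsplit extension so that an element mapping to zero under both $a_{2d_k}$ and $a_{2d_k+2(p-1)}$ is nonzero. The freeness assumption is precisely what defeats this, but one must check that the two filtration quotients really are $H^{2d_k}(X)$ and $H^{2d_k+2(p-1)}(X)$ with no intermediate contribution in the stated range, which uses the fact (from Proposition \ref{hofib-cohomology}) that $\widetilde{H}^*(\hofib(\rho^k))$ vanishes between the two generators and below $2d_k$. Once the filtration is pinned down to these two layers, freeness of the total group forces the filtration to split and injectivity of $\Phi_k$ follows.
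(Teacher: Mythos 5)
Your reduction to injectivity is harmless (the homomorphism property does hold, though for a degree reason --- $a_{2d_k}$ and $a_{2d_k+2(p-1)}$ are primitive in the relevant range because $\widetilde{H}^*(\hofib(\rho^k))$ vanishes in positive degrees below $2d_k$ --- not because they are ``genuine cohomology classes''), but the injectivity argument has a genuine gap. The associated graded pieces of the skeletal filtration on $[X,\hofib(\rho^k)]$ are subquotients of $H^s(X;\pi_s(\hofib(\rho^k)))$ for \emph{all} $s\le\dim X$, and $\pi_*(\hofib(\rho^k))$ contains plenty of $p$-torsion in the range $*<2d_k+4(p-1)$ (this is visible from the fibration $\hofib(\rho^k)\to B_k\to\Omega^2\widetilde{C}_k$ together with Propositions \ref{pi(S)} and \ref{pi(B)}); Proposition \ref{hofib-cohomology} controls the integral \emph{cohomology} in a range but does not kill torsion homotopy. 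So your claim that the filtration has ``at most two nonzero graded pieces, namely $H^{2d_k}(X)$ and $H^{2d_k+2(p-1)}(X)$,'' is unjustified. Worse, the mechanism you invoke to repair this --- that freeness of the total group rules out torsion in the graded pieces and forces the filtration to split --- is false: $p\Z_{(p)}\subset\Z_{(p)}$ is a two-step filtration of a free $\Z_{(p)}$-module whose associated graded is $\Z_{(p)}\oplus\Z/p$. Freeness of $[X,\hofib(\rho^k)]$ simply does not pin down the associated graded. Finally, even on the two free layers, knowing that $\Phi_k$ is ``the identity on associated graded'' integrally would require the evaluation of $a_{2d_k}$ against $\pi_{2d_k}(\hofib(\rho^k))$ to be a generator, which Proposition \ref{hofib-cohomology} only guarantees rationally (the transgression statement is modulo decomposables).

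The paper's proof avoids all of this by rationalizing at the outset: by Proposition \ref{hofib-cohomology}, the map $a_{2d_k}\times a_{2d_k+2(p-1)}\colon\hofib(\rho^k)\to K(\Z_{(p)},2d_k)\times K(\Z_{(p)},2d_k+2(p-1))$ is a rational $(2d_k+4(p-1))$-equivalence, so for $\dim X<2d_k+4(p-1)$ the map $\Phi_k\otimes\Q$ is an isomorphism; the freeness hypothesis is then used only to embed $[X,\hofib(\rho^k)]$ into $[X,\hofib(\rho^k)]\otimes\Q$, whence $\Phi_k$ is injective. Note how much less this asks of the hypotheses than your argument: after tensoring with $\Q$ all torsion contributions (both in $\pi_*(\hofib(\rho^k))$ and in the extension problems you worry about) vanish, and your two-layer detection picture becomes correct. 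In other words, your proposal can be repaired precisely by running it rationally first and using freeness as torsion-freeness at the very last step --- which is, in substance, the paper's proof.
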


\begin{proof}
By Proposition \ref{hofib-cohomology}, the map $a_{2d_k}\times a_{2d_k+2(p-1)}\colon\hofib(\rho^k)\to K(\Z_{(p)},2d_k)\times K(\Z_{(p)},2d_k+2(p-1))$ is a rational $(2d_k+4(p-1))$-equivalence. Then by $\dim X<2d_k+4(p-1)$, $\Phi_k$ is an isomorphism after tensoring $\mathbb{Q}$, so since $[X,\hofib(\rho^k)]$ is a free $\Z_{(p)}$-module, the proof is completed.
\end{proof}

Suppose $G$ is a quasi-$p$-regular exceptional Lie group. Then by Table \ref{non-trivial1} and Proposition \ref{Samelson-3}, it remains to calculate the Samelson products $\langle\bar{\epsilon}_i,\bar{\epsilon}_j\rangle$ in $G$ for $(G,p,\{i,j\})$ in the following table.

\renewcommand{\arraystretch}{1.2}
\begin{table}[H]
\caption{}
\label{non-trivial2}
\centering
\begin{tabular}{lllll}
$\E_6$&$p=5$&$\{2,8\},\{5,5\}$&$p=7$&$\{2,6\},\{5,9\}$\\
$\E_7$&$p=11$&$\{2,8\},\{2,10\},\{8,10\},\{8,14\}$&$p=13$&$\{2,6\},\{2,12\},\{6,6\},\{6,8\},\{6,12\}$\\
$\E_8$&$p=11$&$\{2,20\},\{8,14\}$&$p=13$&$\{2,12\},\{8,18\}$\\
&$p=17$&$\{14,20\}$&$p=19$&$\{2,12\},\{2,18\},\{12,12\},\{14,24\}$
\end{tabular}
\end{table}

We denote the composite of the representation $\rho_\ell$ and the inclusion $\SU(N_\ell)\to\SU(\infty)$ by the same symbol $\rho_\ell$, where $N_\ell=27,56,248$ for $\ell=6,7,8$. For $(G,p,\{i,j\})=(\mathrm{E}_8,19,\{12,12\})$, it follows from Proposition \ref{Chern-1} that the condition of Proposition \ref{hofib-cohomology} dose not hold if $k=24$. However, there is $a_{2i-2}\in H^{2i-2}(\hofib(\rho_8^{24});\Z_{(p)})$ which transgresses to the suspension of $c_i$ for $i=42,60$. Then we define
$$\Phi_{24}=a_{82}\oplus a_{118}\colon[X,\hofib(\rho_8^{24})]\to H^{82}(X;\Z_{(p)})\oplus H^{118}(X;\Z_{(p)}).$$

\begin{proposition}
\label{[A,hofib]}
For $i,j\in t_p(G)$, let $X$ be the $(2d_{k(i,j)}+2(p-1))$-skeleton of $A_i\wedge A_j$, where $k(i,j)$ is the integer $k$ in Corollary \ref{pi(A)}. If $(G,p,\{i,j\})$ is in Table \ref{non-trivial2}, then the map $\Phi_{k(i,j)}$ is an injective homomorphism.
\end{proposition}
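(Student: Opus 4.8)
The plan is to deduce the statement from Corollary \ref{hofib-homotopy}, applied to the fibre $\hofib(\rho_\ell^{k})$ with $k=k(i,j)$ and $\ell\in\{6,7,8\}$ determined by $G$, together with its evident variant for the exceptional triple $(\E_8,19,\{12,12\})$ in which $\Phi_{24}$ is the map built from $c_{42}$ and $c_{60}$ introduced before the statement. Corollary \ref{hofib-homotopy} has exactly two substantive hypotheses, the cohomological condition of Proposition \ref{hofib-cohomology} and freeness of $[X,\hofib(\rho_\ell^{k})]$ as a $\Z_{(p)}$-module; the remaining hypothesis $\dim X<2d_{k}+4(p-1)$ is automatic, since $X$ is by definition the $(2d_{k}+2(p-1))$-skeleton of $A_i\wedge A_j$ and $p>2$.

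First I would check the condition of Proposition \ref{hofib-cohomology}, that $\rho_\ell^{k}$ is a cohomology isomorphism below degree $2d_{k}+1$. Under the decomposition $\rho_\ell\simeq\rho^1\times\cdots\times\rho^{p-1}$ of Theorem \ref{decomp-rho}, the generators of $H^*(B_{k})$, which lie in degrees $2k-1+2s(p-1)$ for $0\le s\le r_{k}-1$, are detected by the linear parts of the Chern classes $c_{k+s(p-1)}(\rho_\ell)$; the required type numbers $k+s(p-1)$ belong to $t(G)$ by construction of $B_k$. Thus the condition reduces to the assertion that each coefficient of $x_{2(k+s(p-1))}$ in $c_{k+s(p-1)}(\rho_\ell)$ is a unit in $\Z_{(p)}$, which one reads off case by case from Proposition \ref{Chern-1} for each triple in Table \ref{non-trivial2}. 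Granting this, Proposition \ref{hofib-cohomology} gives $\widetilde{H}^*(\hofib(\rho_\ell^{k}))=\langle a_{2d_{k}},a_{2d_{k}+2(p-1)}\rangle$ for $*<2d_{k}+4(p-1)$; in particular $\hofib(\rho_\ell^{k})$ is $(2d_{k}-1)$-connected by the Hurewicz theorem, and through dimension $2d_{k}+2(p-1)$ it has the homotopy type of a two-cell complex $S^{2d_{k}}\cup_{f}e^{2d_{k}+2(p-1)}$ with $f\in\pi_{2d_{k}+2(p-1)-1}(S^{2d_{k}})$.

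The main work is the freeness of $[X,\hofib(\rho_\ell^{k})]$. Since $\hofib(\rho_\ell^{k})$ is $(2d_{k}-1)$-connected, restriction induces an isomorphism $[X,\hofib(\rho_\ell^{k})]\cong[X/X^{(2d_{k}-1)},\hofib(\rho_\ell^{k})]$, and because the cells of $A_i\wedge A_j$ are spaced $2(p-1)$ apart in degrees $\equiv 2(k-1)\bmod 2(p-1)$, the quotient $X/X^{(2d_{k}-1)}$ has cells only in the two dimensions $2d_{k}$ and $2d_{k}+2(p-1)$. Applying $[-,\hofib(\rho_\ell^{k})]$ to the cofibre sequence joining these two cell layers, the set is assembled from $\pi_{2d_{k}}$, $\pi_{2d_{k}+2(p-1)}$ and coboundaries into $\pi_{2d_{k}+1}$ and $\pi_{2d_{k}+2(p-1)-1}$ of $\hofib(\rho_\ell^{k})$. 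Here $\pi_{2d_{k}}=\Z_{(p)}$ by Hurewicz; and from the two-cell model, the $p$-local splitting $\Omega S^{2d_{k}}\simeq S^{2d_{k}-1}\times\Omega S^{4d_{k}-1}$, and Proposition \ref{pi(S)}, one finds (since $d_{k}\ge p$ keeps everything in range) that $\pi_{2d_{k}+1}(S^{2d_{k}})=0$ and $\pi_{2d_{k}+2(p-1)}(S^{2d_{k}})=0$, whence $\pi_{2d_{k}+1}(\hofib(\rho_\ell^{k}))=0$ and $\pi_{2d_{k}+2(p-1)}(\hofib(\rho_\ell^{k}))\cong\Z_{(p)}$ irrespective of whether $f=0$ or $f=\alpha_1$. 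The vanishing of $\pi_{2d_{k}+1}$ makes the relevant comparison map injective, so the computation collapses to a split extension of free $\Z_{(p)}$-modules; hence $[X,\hofib(\rho_\ell^{k})]$ is free and Corollary \ref{hofib-homotopy} yields the injectivity of $\Phi_{k}$.

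The step I expect to be hardest is this freeness in the degenerate case $(\E_8,19,\{12,12\})$: there the coefficient of $x_{48}$ in $c_{24}(\rho_8)$ is a multiple of $19$, so Proposition \ref{hofib-cohomology} fails at $k=24$ and $\hofib(\rho_8^{24})$ carries extra cells below degree $118$ not lying on the grid of cells of $X$. One must then recompute $\pi_{82}$ and $\pi_{118}$ of $\hofib(\rho_8^{24})$ directly from its cell structure, again using that the relevant $p$-primary homotopy of spheres vanishes at $p=19$ (the first $\alpha$-element lies in stem $2(p-1)-1=35$), to conclude that both groups are free and that the modified $\Phi_{24}=a_{82}\oplus a_{118}$ is injective. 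The bookkeeping of which Chern coefficients are units and which cells survive in $X/X^{(2d_{k}-1)}$ is routine once organised per triple, so the genuine obstacle is confined to identifying $\hofib(\rho_\ell^{k})$ and its low homotopy in these few exceptional cases.
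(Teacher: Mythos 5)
Your handling of the cases where Proposition \ref{hofib-cohomology} applies is essentially the paper's own argument: check via Proposition \ref{Chern-1} that the relevant Chern coefficients are units, deduce that $\hofib(\rho_\ell^{k})$ is $(2d_k-1)$-connected with the homotopy of a two-cell complex through the range, verify $\pi_{2d_k+1}=0$ and $\pi_{2d_k+2(p-1)}\cong\Z_{(p)}$ from Proposition \ref{pi(S)}, get freeness of $[X,\hofib(\rho_\ell^{k})]$ by induction over the (sparse) cell structure, and conclude by Corollary \ref{hofib-homotopy}. That part is fine and matches the paper's first paragraph.

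The genuine gap is the case $(\E_8,19,\{12,12\})$, which you defer with a plan that would fail. You propose to recompute $\pi_{82}$ and $\pi_{118}$ of $\hofib(\rho_8^{24})$ ``using that the relevant $p$-primary homotopy of spheres vanishes at $p=19$ (the first $\alpha$-element lies in stem $35$).'' But here $B_{24}=S^{47}$, so the relevant stems are exactly $82-47=35$ and $118-47=71$, where $\alpha_1$ and $\alpha_2$ live; the fibration $\Omega^3\widetilde{C}_{24}\to\hofib(\rho_8^{24})\to S^{47}$ therefore produces extension problems $0\to\Z_{(p)}\to\pi_i(\hofib(\rho_8^{24}))\to\Z/19\to 0$ for $i=82,118$, together with a potential $\Z/19$ in $\pi_{46}$ --- and this matters because $X$ has a cell in dimension $46$, so, contrary to your remark, the extra classes do not all miss the grid of cells of $X$ (the class $\P^1a_{10}$ sits in degree $46$). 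Freeness of $[X,\hofib(\rho_8^{24})]$ and injectivity of the modified $\Phi_{24}$ hinge on resolving these extensions nontrivially, i.e.\ on showing that $\rho_8^{24}$ is onto the relevant homotopy even though its effect on $H^{47}$ is divisible by $19$; your sketch asserts the conclusion (``both groups are free and \dots\ $\Phi_{24}$ is injective'') without any mechanism for this. The paper supplies exactly that mechanism: it computes $H^*(\hofib(\rho_8^{24});\Z/19)$ as $\Z/19[a_{10},\P^1a_{10},\P^2a_{10},\P^3a_{10}]\otimes\Lambda(\beta\P^1a_{10})$ in the range, passes to the $10$-connective cover $F$, uses the Serre spectral sequence of $K(\Z_{(p)},9)\to F\to\hofib(\rho_8^{24})$ together with the Adem relation $\P^1\beta\P^2=2\beta\P^3+\P^3\beta$ to obtain $H^*(F;\Z/19)=\langle b_{82},\P^1b_{82}\rangle$ for $*<154$, deduces that $[X,F]\cong[X,\hofib(\rho_8^{24})]$ is free with $\Phi'=b_{82}'\oplus b_{118}'$ injective, and finally invokes Toda's lemma \cite{To2} to choose $b_{82}',b_{118}'$ so that $\Phi_{24}$ corresponds to $19\cdot\Phi'$ under the covering, whence $\Phi_{24}$ is injective. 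Without this connective-cover computation and the $\times 19$ comparison, your argument for the exceptional case does not go through.
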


\begin{proof}
We first consider the case $(G,p,\{i,j\})\ne(\mathrm{E}_8,19,\{12,12\})$.
By Proposition \ref{pi(S)} and \ref{hofib-cohomology}, we see that $\pi_i(\hofib(\rho^k_\ell))\cong\Z_{(p)}$ and $\pi_{i+1}(\hofib(\rho^k_\ell))=0$ for $i=2d_k,2d_k+2(p-1)$, where $k=k(i,j)$, since they are in the stable range. Then since $X$ consists of cells in dimension $2d_k\bmod 2(p-1)$ and $\dim X\leq 2d_{k}+2(p-1)$ by definition, we see that $[X,\hofib(\rho_\ell^k)]$ is a free $\Z_{(p)}$-module by skeletal induction. Thus the proof is done by Corollary \ref{hofib-homotopy}.

We next consider the case $(G,p,\{i,j\})=(\mathrm{E}_8,19,\{12,12\})$. 
Since $c_{24}(\rho_8)=19\lambda x_{48}+\cdots$ for $\lambda\in\Z_{(p)}^\times$, we have
$$H^*(\hofib(\rho_8^{24});\Z/p)=\Z/p[a_{10},\mathcal{P}^1a_{10},\mathcal{P}^2a_{10},\mathcal{P}^3a_{10}]\otimes\Lambda(\beta\mathcal{P}^1a_{10}),\quad|a_{10}|=10$$
for $*<154$. Let $F$ be the 10-connective cover of $\hofib(\rho_8^{24})$. Then by the Serre spectral sequence of the homotopy fibration $K(\Z_{(p)},9)\to F\to\hofib(\rho_8^{24})$ and the Adem relation $\mathcal{P}^1\beta\mathcal{P}^2=2\beta\mathcal{P}^3+\mathcal{P}^3\beta$, we get
$$H^*(F;\Z/p)=\langle b_{82},\mathcal{P}^1b_{82}\rangle$$
for $*<154$. Then quite similarly to the above, we see that $[X,F]$ is a free $\Z_{(p)}$-module. Thus as in the proof of Corollary \ref{hofib-homotopy}, we see that the map 
$$\Phi'=b_{82}'\oplus b'_{118}\colon[X,F]\to H^{82}(X;\Z_{(p)})\oplus H^{118}(X;\Z_{(p)})$$
is injective, where $b_{82}',b_{118}'$ are mapped to non-zero multiples of $b_{82},\mathcal{P}^1b_{82}$ respectively by the mod $p$ reduction. Now by \cite[Lemma 2.1]{To2}, we may choose $b_{82}',b_{118}'$ such that the diagram
$$\xymatrix{[X,F]\ar[r]^(.31){\Phi'}\ar[d]&H^{82}(X;\Z_{(p)})\oplus H^{118}(X;\Z_{(p)})\ar[d]^{\times 19}\\
[X,\hofib(\rho_8^{24})]\ar[r]^(.4){\Phi_{24}}&H^{82}(X;\Z_{(p)})\oplus H^{118}(X;\Z_{(p)})}$$
commutes. Obviously, the left vertical arrow is an isomorphism. Then we obtain that the map $\Phi_{24}$ is injective, completing the proof.
\end{proof}


\section{Representations and Samelson products}

Consider an H-map $\rho\colon G\to\SU(\infty)$. Then there is an exact sequence:
$$\widetilde{K}(X)\cong[X,\Omega\SU(\infty)]\xrightarrow{\delta}[X,\hofib(\rho)]\to[X,G]\xrightarrow{\rho_*}[X,\SU(\infty)]$$
Suppose that $X=A\wedge B$ and consider the Samelson product $\langle\alpha,\beta\rangle$ in $G$ of maps $\alpha\colon A\to G,\beta\colon B\to G$. Since $\SU(\infty)$ is homotopy commutative we have $\rho_*(\langle\alpha,\beta\rangle)=0$, so there is $\gamma\in[X,\hofib(\rho)]$ which maps to $\langle\alpha,\beta\rangle$. Then we get:

\begin{lemma}
\label{im-delta}
The Samelson product $\langle\alpha,\beta\rangle$ is trivial if and only if $\gamma\in\mathrm{Im}\,\delta$.
\end{lemma}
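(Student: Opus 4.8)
The plan is to unwind the definition of $\gamma$ through the fibration exact sequence and identify exactly when it lifts from $[X,\hofib(\rho)]$. The statement is essentially a formal consequence of the long exact sequence
$$\widetilde{K}(X)\xrightarrow{\delta}[X,\hofib(\rho)]\xrightarrow{\iota_*}[X,G]\xrightarrow{\rho_*}[X,\SU(\infty)]$$
associated to the homotopy fibration $\hofib(\rho)\xrightarrow{\iota}G\xrightarrow{\rho}\SU(\infty)$, where $\iota$ denotes the fiber inclusion. The crucial input, already noted just before the statement, is that $\SU(\infty)$ is homotopy commutative, so $\rho_*(\langle\alpha,\beta\rangle)=\langle\rho\circ\alpha,\rho\circ\beta\rangle=0$; hence $\langle\alpha,\beta\rangle\in\ker\rho_*=\operatorname{Im}\iota_*$, which is precisely what guarantees the existence of a lift $\gamma$ with $\iota_*(\gamma)=\langle\alpha,\beta\rangle$.

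First I would prove the \emph{if} direction. Suppose $\gamma\in\operatorname{Im}\delta$, say $\gamma=\delta(\xi)$. Then $\langle\alpha,\beta\rangle=\iota_*(\gamma)=\iota_*(\delta(\xi))$, and since $\iota_*\circ\delta=0$ by exactness of the sequence at $[X,\hofib(\rho)]$, we conclude $\langle\alpha,\beta\rangle=0$; that is, the Samelson product is trivial. Conversely, for the \emph{only if} direction, suppose $\langle\alpha,\beta\rangle$ is trivial, i.e. $\iota_*(\gamma)=0$. By exactness at $[X,\hofib(\rho)]$, we have $\ker\iota_*=\operatorname{Im}\delta$, so $\gamma\in\operatorname{Im}\delta$. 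This closes the equivalence.

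The one subtlety I must address is that this argument manipulates $[X,\hofib(\rho)]$ and $[X,G]$ as pointed homotopy sets, not a priori as groups, so I should be careful that "exactness" is invoked only in the set-theoretic sense appropriate to the fibration sequence of pointed sets (kernel $=$ preimage of the basepoint $=$ image of the preceding map). Since $X=A\wedge B$ is a suspension, each of these sets is in fact a group and the connecting map $\delta$ and $\iota_*$ are homomorphisms, so no difficulty arises; but the only facts actually needed are $\iota_*\circ\delta=0$ and $\ker\iota_*=\operatorname{Im}\delta$, both of which are standard exactness properties of the homotopy fibration. I expect no genuine obstacle here: the lemma is a clean reformulation that transports the problem of detecting $\langle\alpha,\beta\rangle$ in $G$ into the problem of deciding membership in $\operatorname{Im}\delta$ inside $[X,\hofib(\rho)]$, where the cohomological detection maps $\Phi_k$ of the previous section can then be brought to bear. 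The real work is deferred to the subsequent identification of $\hofib(\rho)$ and the computation of $\delta$, not to this formal step.
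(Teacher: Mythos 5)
Your proof is correct and is exactly the argument the paper intends: the lemma is stated there as an immediate consequence of the pointed-set exactness of the fibration sequence $\widetilde{K}(X)\xrightarrow{\delta}[X,\hofib(\rho)]\to[X,G]\xrightarrow{\rho_*}[X,\SU(\infty)]$, with homotopy commutativity of $\SU(\infty)$ supplying the lift $\gamma$. Your extra remark that $X$ being a suspension makes everything group-valued (so exactness in the naive sense suffices) is a sound, if unstated in the paper, justification of the same formal step.
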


This simple criterion is considered by Hamanaka and Kono \cite{HK} in the case that $\rho$ is the inclusion $\SU(n)\to\SU(\infty)$ for which $\hofib(\rho)$ is explicitly given by $\Omega\SU(\infty)/\SU(n)$. We apply Lemma \ref{im-delta} to determine (non-)triviality of the remaining Samelson products. Assume that $(G,p,\{i,j\})$ is in Table \ref{non-trivial2}. Put $X$ to be the $(2d_k+2(p-1))$-skeleton of $A_i\wedge A_j$. Then by  Corollary \ref{pi(A)}, there is only one $k(i,j)$ such that
$$[X,\hofib(\rho_\ell)]\cong[X,\hofib(\rho^{k(i,j)}_\ell)]$$
and by Corollary \ref{hofib-homotopy} and Proposition \ref{[A,hofib]}, we have identified the homotopy set on the right hand side. Quite similarly to \cite[Proposition 3.1]{HK}, we can prove the following.

\begin{proposition}
\label{Phi-delta}
In the situation of Proposition \ref{[A,hofib]}, we have
$$\Phi_{k(i,j)}\circ\delta=d_{k(i,j)}!\mathrm{ch}_{d_{k(i,j)}}\oplus(d_{k(i,j)}+p-1)!\mathrm{ch}_{d_{k(i,j)}+p-1}.$$
\end{proposition}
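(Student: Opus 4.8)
The plan is to understand the connecting map $\delta\colon \widetilde{K}(X)\cong[X,\Omega\SU(\infty)]\to[X,\hofib(\rho_\ell^{k})]$ cohomologically, and then to read off its effect through the coordinate functional $\Phi_k=a_{2d_k}\oplus a_{2d_k+2(p-1)}$. The key point is that $\delta$ is the map of mapping spaces induced by the fibration inclusion $\Omega\SU(\infty)\to\hofib(\rho_\ell^k)$ coming from the homotopy fibration $\hofib(\rho_\ell^k)\to B_k\xrightarrow{\rho_\ell^k}\Omega^2\widetilde{C}_k$. The strategy, following \cite[Proposition 3.1]{HK} verbatim in spirit, is to pull back the cohomology generators $a_{2d_k}$ and $a_{2d_k+2(p-1)}$ of $\hofib(\rho_\ell^k)$ along this inclusion and identify them with (suspensions of) Chern character components of the universal bundle on $\Omega\SU(\infty)\simeq\mathrm{BU}$.

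First I would recall from Proposition \ref{hofib-cohomology} that $a_{2n}$ (for $n=d_k$ and $n=d_k+p-1$) transgresses to the suspension of $c_{n+1}$ modulo decomposables in the homotopy fibration defining $\hofib(\rho_\ell^k)$. Under the map $\delta$, which is essentially restriction along $\Omega\SU(\infty)\to\hofib(\rho_\ell^k)$, these transgressive classes get identified with the corresponding generators of $H^*(\Omega\SU(\infty))\cong H^*(\mathrm{BU})$. Concretely, the class $a_{2n}$ restricts to the generator of $H^{2n}(\Omega\SU(\infty))$ dual to $\mathrm{ch}_n$; the standard normalization is that the image of $a_{2n}$ is $n!\,\mathrm{ch}_n$ of the universal $K$-theory class. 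Thus composing the evaluation of $a_{2n}$ with $\delta$ sends a $K$-theory class $\xi\in\widetilde{K}(X)$ to $n!\,\mathrm{ch}_n(\xi)\in H^{2n}(X)$. Applying this with $n=d_k$ and $n=d_k+p-1$ gives the two summands in the claimed formula.

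The second step is to justify the precise integer normalization $n!\,\mathrm{ch}_n$ rather than some other multiple. This comes from the interplay between the transgression in the path-loop fibration $\Omega\SU(\infty)\to P\SU(\infty)\to\SU(\infty)$ and the Chern character: the suspension isomorphism carries $\Sigma c_{n+1}\in H^{2n+1}(\SU(\infty))$ to the generator of $H^{2n}(\Omega\SU(\infty))$, and under the Chern character the integral class $c_{n+1}$ pairs with $\mathrm{ch}_n$ through Girard's formula \eqref{Girard}, where exactly the factor $n!$ appears. Since $a_{2n}$ was chosen to transgress to $\Sigma c_{n+1}$ modulo decomposables, and since on the skeletal range of $X$ the decomposables contribute nothing to the relevant functional (the cells of $X$ sit in dimensions $2d_k\bmod 2(p-1)$, so products of lower classes land in the wrong degrees), the functional $a_{2n}\circ\delta$ is exactly $n!\,\mathrm{ch}_n$.

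The main obstacle I expect is bookkeeping the decomposables: one must verify that throughout the relevant range $*\le 2d_k+2(p-1)$ the indeterminacy ``modulo decomposables'' in Proposition \ref{hofib-cohomology} does not perturb the linear functionals $a_{2d_k}$ and $a_{2d_k+2(p-1)}$ on $[X,\hofib(\rho_\ell^k)]$. The cleanest way around this is the dimensional observation above, namely that $X$ has cells only in dimensions congruent to $2d_k$ modulo $2(p-1)$ and with $\dim X$ bounded by $2d_k+2(p-1)$, so that the only decomposables of the right total degree are those already accounted for; hence the comparison with \cite[Proposition 3.1]{HK} goes through word for word, and I would simply cite that proposition, indicating the replacement of the inclusion $\SU(n)\hookrightarrow\SU(\infty)$ by $\rho_\ell^k\colon B_k\to\Omega^2\widetilde{C}_k$ and the consequent shift of degrees from $(n,n+p-1)$ to $(d_k,d_k+p-1)$.
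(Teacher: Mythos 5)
Your proposal is correct and follows essentially the same route as the paper, which proves this proposition only by the remark that it goes ``quite similarly to \cite[Proposition 3.1]{HK}'': your reconstruction---identifying $\delta$ with composition along the fibration connecting map $\Omega\SU(\infty)\to\hofib(\rho_\ell^{k})$, pulling $a_{2n}$ back to the Newton class $n!\,\mathrm{ch}_n$ on $\Omega\SU(\infty)\simeq BU$ via transgression and Girard's formula \eqref{Girard}, and disposing of the decomposable indeterminacy---is precisely the content of that citation. One cosmetic remark: the indeterminacy lives in the base $H^*(\Omega^2\widetilde{C}_{k})$ and is most cleanly killed by the fact that the cohomology suspension annihilates products, though your degree argument on the cells of $X$ also suffices in all cases in Table \ref{non-trivial2}.
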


Let us calculate the image of $\Phi_{k(i,j)}\circ\delta$ explicitly. To choose generators of $\widetilde{K}(\Sigma A_i)$, we calculate the Chern character of the restriction $\bar{\rho}_\ell\colon \Sigma A\to B\SU(\infty)$ of $B\rho_\ell\colon BG\to B\SU(\infty)$. Note that
$$\mathrm{ch}_n(\bar{\rho_\ell})=\frac{(-1)^{n-1}}{(n-1)!}\iota^*(c_n(\rho_\ell))$$
by \eqref{Girard} where $\iota\colon\Sigma A\to BG$ denotes the inclusion. Then by Proposition \ref{Chern-1}, we have:

\begin{align*}
\mathrm{ch}(\bar{\rho}_6)=&3\Sigma y_3+\frac{12}{4!}\Sigma y_9-\frac{1}{2\cdot 5!}\Sigma y_{11}+\frac{5}{8\cdot 7!}\Sigma y_{15}+\frac{14}{8!}\Sigma y_{17}-\frac{5}{32\cdot 11!}\Sigma y_{23}\\
\mathrm{ch}(\bar{\rho}_7)=&6\Sigma y_3-\frac{1}{5!}\Sigma y_{11}+\frac{5}{4\cdot 7!}\Sigma y_{15}+\frac{126}{9!}\Sigma y_{19}-\frac{9}{2\cdot 11!}\Sigma y_{23}+\frac{319}{40\cdot 13!}\Sigma y_{27}-\frac{1229}{60\cdot 17!}\Sigma y_{35}\\
\mathrm{ch}(\bar{\rho}_8)=&30\Sigma y_3+\frac{15}{7!}\Sigma y_{15}+\frac{126}{11!}\Sigma y_{23}+\frac{165}{13!}\Sigma y_{27}+\frac{1820}{17!}\Sigma y_{35}+\frac{23001}{5\cdot 19!}\Sigma y_{39}-\frac{1746822}{5\cdot 23!}\Sigma y_{47}\\
&+\frac{15265250}{29!}\Sigma y_{59}
\end{align*}

\begin{remark}
Our expression of the Chern character of $\bar{\rho}_6$ differs from that of \cite{W2} since our choice of generators of $H^*(B\E_6)$ differs from that of \cite{W2}.
\end{remark}

We now choose generators of $\widetilde{K}(\Sigma A_i)$. If $r_i=1$, then $\Sigma A_i=S^{2i}$, implying $\widetilde{K}(\Sigma A_i)$ is a free $\Z_{(p)}$-module generated by a single generator $\eta_i$ such that 
$$\mathrm{ch}(\eta_i)=u_{2i}$$
where $u_m$ is a generator of $H^m(S^m)\cong\Z_{(p)}$. If $r_i=2$, then $\Sigma A_i=S^{2i}\cup e^{2i+2p-2}$, so there is a short exact sequence
\begin{equation}
\label{exact-K}
0\to\widetilde{K}(S^{2i+2p-2})\to\widetilde{K}(\Sigma A_i)\to\widetilde{K}(S^{2i})\to 0
\end{equation}
where $\widetilde{K}(S^{2n})\cong\Z_{(p)}$. If we put $\xi_i=\bar{\rho}_\ell\vert_{\Sigma A_i}$ for $i$ in Table \ref{non-trivial2}, 
then it is easily checked that
$$\mathrm{ch}(\xi_i)=au_{2i}+\cdots\quad(a\in\Z_{(p)}^{\times}).$$ 
So it follows from \eqref{exact-K} that $\widetilde{K}(\Sigma A_i)$ is a free $\Z_{(p)}$-module generated by $\xi_i$ and $\eta_i$ such that 
$$\mathrm{ch}(\eta_i)=u_{2i+2(p-1)}$$
where $\eta_i$ is explicitly given by the composite of the pinch map to the top cell $\Sigma A_i\to S^{2i+2p-2}$ and a generator of $\pi_{2i+2p-2}(B\SU(\infty))\cong\Z_{(p)}$. 

Since $\widetilde{K}(A_i)$ is torsion free, we have
$$\widetilde{K}(A_i\wedge A_j)\cong\Sigma^{-2}\widetilde{K}(\Sigma A_i)\otimes\widetilde{K}(\Sigma A_j).$$
Thus we obtain the following by Proposition \ref{Phi-delta}.

\begin{lemma}
\label{Phi-delta-K}
\begin{enumerate}
\item $\Phi_{k(i,j)}\circ\delta$ is surjective for $(G,p,\{i,j\})=(\E_7,11,\{2,10\})$, $(\E_7,13,\{2,12\})$, $(\E_7,13,\{6,8\})$, $(\E_8,19,\{2,18\})$.
\item For $(G,p,(i,j))=(\E_6,7,\{2,6\}),(\E_7,11,\{2,8\}),(\E_7,11,\{8,10\}),(\E_8, 19, \{2,12\})$, we have 
$$\mathrm{Im}\,\Phi_{k(i,j)}\circ\delta\supset p\cdot(H^{2i+2j+2p-4}(A_i\wedge A_j;\Z_{(p)})\oplus H^{2i+2j+4p-6}(A_i\wedge A_j;\Z_{(p)})).$$
\item $\mathrm{Im}\,\Phi_{k(i,j)}\circ\delta\mod p$ is generated by:
\begin{table}[H]
\label{gamma}
\centering
\begin{tabular}{lllll}
$\E_6$\\
$p=5$&$(2,8)$&$(0,0)$&$(5,5)$&$(0,*)$\\
$p=7$&$(2,6)$&$(3y_3\otimes y_{23}-y_{15}\otimes y_{11},4y_{15}\otimes y_{23})$&$(5,9)$&$(0,0)$\\
$\E_7$\\
$p=11$&$(2,8)$&$(5y_3\otimes y_{35}-2y_{23}\otimes y_{15},8y_{23}\otimes y_{35})$&$(8,14)$&$(0,*)$\\
$p=13$&$(2,6)$&$(5y_{3}\otimes y_{35}-3y_{27}\otimes y_{11},*)$&$(6,6)$&$(4y_{11}\otimes y_{35}+4y_{35}\otimes y_{11},-3y_{35}\otimes y_{35})$\\
&$(6,12)$&$(0,0)$\\
$\E_8$\\
$p=11$&$(2,20)$&$(0,0)$&$(8,14)$&$(0,0)$\\
$p=13$&$(2,12)$&$(-y_{3}\otimes y_{47}-3y_{27}\otimes y_{23},*)$&$(8,18)$&$(0,0)$\\
$p=17$&$(14,20)$&$(0,*)$\\
$p=19$&$(2,12)$&$(13y_{3}\otimes y_{59}+9y_{39}\otimes y_{23},15y_{39}\otimes y_{59})$&$(14,24)$&$(0,0)$
\end{tabular}
\end{table}
\end{enumerate}
\end{lemma}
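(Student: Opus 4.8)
The plan is to reduce the computation of $\mathrm{Im}(\Phi_{k(i,j)}\circ\delta)$ to the Chern-character formula of Proposition~\ref{Phi-delta} and then to carry out a $p$-adic valuation analysis on an explicit $\Z_{(p)}$-basis of $\widetilde{K}(A_i\wedge A_j)$. Write $k=k(i,j)$ and $d_k=k-1+r_k(p-1)$, so that by Proposition~\ref{Phi-delta} the map $\Phi_k\circ\delta$ equals $d_k!\,\mathrm{ch}_{d_k}\oplus(d_k+p-1)!\,\mathrm{ch}_{d_k+p-1}$; it therefore suffices to evaluate these two graded pieces of the Chern character on each basis element and to read off the $\Z_{(p)}$-span of the resulting pairs in $H^{2d_k}(X)\oplus H^{2d_k+2(p-1)}(X)$. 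Since $\widetilde{K}(A_i\wedge A_j)\cong\Sigma^{-2}\widetilde{K}(\Sigma A_i)\otimes\widetilde{K}(\Sigma A_j)$ and each $\widetilde{K}(\Sigma A_i)$ is $\Z_{(p)}$-free on $\eta_i$, together with $\xi_i$ when $r_i=2$, a basis is furnished by the external products $\xi_i\xi_j$, $\xi_i\eta_j$, $\eta_i\xi_j$, $\eta_i\eta_j$, omitting those containing a $\xi$-factor of rank one. The Chern character of each factor is read off from the expansion of $\mathrm{ch}(\bar{\rho}_\ell)$ computed above: $\mathrm{ch}(\xi_i)$ is the sum of its two terms in degrees $2i$ and $2i+2(p-1)$, while $\mathrm{ch}(\eta_i)=u_{2i+2(p-1)}$ (respectively $u_{2i}$ when $r_i=1$). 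Multiplying and desuspending by $2$, the Chern character of each basis element is an explicit $\Z_{(p)}$-combination of monomials $y_{2i-1+2s(p-1)}\otimes y_{2j-1+2t(p-1)}$, from which I extract the two components in the degrees $2d_k$ and $2d_k+2(p-1)$.

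The decisive step, which is also the main obstacle, is the $p$-adic bookkeeping once these components are multiplied by $d_k!$ and $(d_k+p-1)!$. By the formula $\mathrm{ch}_n(\bar{\rho}_\ell)=\tfrac{(-1)^{n-1}}{(n-1)!}\iota^*(c_n(\rho_\ell))$, the top-cell coefficient of $\xi_i$ (where $n=i+p-1$) carries a denominator $(i+p-2)!$ of positive $p$-valuation, whereas its bottom-cell coefficient is a $p$-local integer; an $\eta$-factor merely contributes an integral top class. Hence the degree-$2d_k$ part of $\mathrm{ch}(\xi_i\xi_j)$ pairs one top-cell and one bottom-cell factor, so its denominator has $p$-valuation one, while the degree-$2d_k+2(p-1)$ part pairs two top-cell factors and has $p$-valuation two; every basis element involving an $\eta$-factor loses one such denominator. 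The classification of the statement is then governed by whether $v_p(d_k!)$ and $v_p((d_k+p-1)!)$, computed by Legendre's formula $v_p(n!)=\sum_{m\ge 1}\lfloor n/p^m\rfloor$, exactly cancel these denominators, giving a $p$-local unit, or exceed them, giving a multiple of $p$. Since $v_p(d_k!)$ depends on where $d_k$ falls relative to the multiples of $p$, this balance must be checked case by case rather than by a uniform estimate, and that sensitivity is precisely the delicate part.

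Running this through the table, in the surjective cases~(1) the relevant $A_j$ is a sphere and the top cell of $A_i\wedge A_j$ sits exactly in degree $2d_k$, so the group $H^{2d_k+2(p-1)}(X)$ vanishes and a single basis element---built from a top-cell $\xi$-coefficient whose denominator cancels $d_k!$---maps onto a generator of the rank-one group $H^{2d_k}(X)$, yielding surjectivity. In the cases of~(2) every basis element has net $p$-valuation at least one, and the $\eta$-involving ones already span $p\cdot\bigl(H^{2d_k}(X)\oplus H^{2d_k+2(p-1)}(X)\bigr)$, which gives the asserted containment. For~(3) I reduce modulo $p$: the surviving contribution comes from $\xi_i\xi_j$ when both factors have rank two, and computing its components modulo $p$ by multiplying the explicit coefficients of $\mathrm{ch}(\bar{\rho}_\ell)$ by $d_k!$ and $(d_k+p-1)!$ and reducing yields the table entries, where $(0,0)$ records that both components vanish and $(0,\ast)$ that the first vanishes while the second is immaterial for the subsequent argument. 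The few entries for which Proposition~\ref{Chern-1} does not determine the relevant quadratic part---namely those needing $c_{28}(\rho_7)$ or $c_{36}(\rho_7)$---are handled by the same scheme after substituting the mod~$p$ quadratic parts of Proposition~\ref{Chern-2}, and the case $(\E_8,19,\{12,12\})$ is excluded here and treated separately via the connective cover $F$ of Proposition~\ref{[A,hofib]}.
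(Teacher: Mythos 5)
Your overall framework coincides with the paper's: Lemma~\ref{Phi-delta-K} is obtained there by evaluating $\Phi_{k}\circ\delta=d_{k}!\,\mathrm{ch}_{d_{k}}\oplus(d_{k}+p-1)!\,\mathrm{ch}_{d_{k}+p-1}$ (Proposition~\ref{Phi-delta}) on the basis $\xi_i\xi_j,\ \xi_i\eta_j,\ \eta_i\xi_j,\ \eta_i\eta_j$ of $\widetilde{K}(A_i\wedge A_j)$ coming from the displayed expansions of $\mathrm{ch}(\bar{\rho}_\ell)$, exactly as you propose, and your treatment of part~(1) is correct (in those four cases $H^{2d_k+2(p-1)}(X)=0$ and the top-cell coefficient of $\xi_i\otimes\eta_j$ is a unit after multiplication by $d_k!$). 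The genuine gap is in your argument for part~(2), which is wrong on both counts. First, the claim that ``every basis element has net $p$-valuation at least one'' contradicts your own part~(3): for $(\E_6,7,\{2,6\})$, $(\E_7,11,\{2,8\})$ and $(\E_8,19,\{2,12\})$ the element $\xi_i\xi_j$ has \emph{unit} coefficients --- indeed by your own bookkeeping the degree-$2d_k$ component carries one factorial denominator of valuation $1$ against $v_p(d_k!)=1$, and the degree-$(2d_k+2(p-1))$ component carries valuation $2$ against $v_p((d_k+p-1)!)=2$ --- which is precisely why the table in~(3) records a nonzero mod-$p$ generator for these tuples. Second, the $\eta$-involving elements do \emph{not} span $p\cdot(H^{2d_k}(X)\oplus H^{2d_k+2(p-1)}(X))$: in the basis $\bigl(y_{2i-1}\otimes y_{2j+2p-3},\ y_{2i+2p-3}\otimes y_{2j-1},\ y_{2i+2p-3}\otimes y_{2j+2p-3}\bigr)$ they map to $p(u_1,0,u_2)$, $p(0,w_1,w_2)$ and $p^2(0,0,z)$ with $u_i,w_i,z$ units, and no $\Z_{(p)}$-combination of these equals $p(0,0,1)$ (killing the first two coordinates forces the coefficients of the first two vectors to vanish, leaving only multiples of $p^2(0,0,z)$). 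The containment of part~(2) therefore genuinely needs the fourth generator $\xi_i\xi_j$, whose image $(A,B,C)$ has unit entries, and it holds if and only if the determinant $-Au_2w_1-Bu_1w_2+Cu_1w_1$ is a unit mod $p$ --- a case-by-case numerical check your proposal never performs. For instance, for $(\E_7,11,\{2,8\})$ one computes $A\equiv 5$, $B\equiv -2$, $C\equiv 8$, $u_1\equiv 3$, $u_2\equiv 7$, $w_1\equiv 1$, $w_2\equiv 7 \bmod 11$, giving determinant $\equiv 9\not\equiv 0$, so the lemma is true, but only because these specific residues cooperate.

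This matters downstream: Proposition~\ref{Samelson-5}(1) deduces triviality of $\langle\bar{\epsilon}_i,\bar{\epsilon}_j\rangle$ for exactly these tuples by combining the unit-coefficient mod-$p$ generator from $\xi_i\xi_j$ in~(3) with the containment in~(2) (e.g.\ for $(\E_6,7,\{2,6\})$ one has $\Phi(\gamma)\equiv 3\cdot(3y_3\otimes y_{23}-y_{15}\otimes y_{11},\,4y_{15}\otimes y_{23})\bmod 7$), so an argument for~(2) that discards $\xi_i\xi_j$ cannot be repaired without redoing the computation you omitted. A secondary inaccuracy: Proposition~\ref{Chern-2} plays no role in this lemma. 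The image of $\delta$ only involves $\mathrm{ch}(\bar{\rho}_\ell)$, i.e.\ the restrictions $\iota^*c_n(\rho_\ell)$, which see only the \emph{linear} parts of the Chern classes from Proposition~\ref{Chern-1}; the mod-$p$ quadratic parts of $c_{28}(\rho_7)$ and $c_{36}(\rho_7)$ are needed later, for the computation of $\Phi_{k(i,j)}(\gamma)$ via $\hat{\gamma}^*(p^dc_n-Q)$ in Lemma~\ref{Phi(gamma)-1}, not here.
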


\begin{corollary}
\label{Samelson-4}
If $i+j=p+1$ and $r_i+r_j=3$ for $i,j\in t_p(G)$, the Samelson product $\langle\bar{\epsilon}_i,\bar{\epsilon}_j\rangle$ is trivial.
\end{corollary}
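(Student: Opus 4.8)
The plan is to invoke the criterion of Lemma~\ref{im-delta}: if $\gamma\in[X,\hofib(\rho_\ell)]$ denotes the lift of $\langle\bar{\epsilon}_i,\bar{\epsilon}_j\rangle$, then it is enough to prove $\gamma\in\mathrm{Im}\,\delta$. I would begin by pinning down the pairs that actually fall under the hypothesis among those still to be treated. Inspecting Table~\ref{non-trivial2}, the pairs with $i+j=p+1$ and $r_i+r_j=3$ are exactly $(\E_7,11,\{2,10\})$, $(\E_7,13,\{2,12\})$, $(\E_7,13,\{6,8\})$ and $(\E_8,19,\{2,18\})$; in every one of these $k(i,j)=2$, so that $d_{k(i,j)}=2p-1$ and $X=A_i\wedge A_j$ has top cell in dimension $2d_{k(i,j)}=4p-2$. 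These four are precisely the cases listed in Lemma~\ref{Phi-delta-K}(1).

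For each such pair Lemma~\ref{Phi-delta-K}(1) asserts that $\Phi_{k(i,j)}\circ\delta$ is surjective onto $H^{2d_{k(i,j)}}(X;\Z_{(p)})\oplus H^{2d_{k(i,j)}+2(p-1)}(X;\Z_{(p)})$, while Proposition~\ref{[A,hofib]} provides that $\Phi_{k(i,j)}$ is an injective homomorphism on $[X,\hofib(\rho_\ell^{k(i,j)})]\cong[X,\hofib(\rho_\ell)]$. The deduction is then formal: since $\Phi_{k(i,j)}(\gamma)$ lies in the target and $\Phi_{k(i,j)}\circ\delta$ hits all of it, there is $\gamma'\in\mathrm{Im}\,\delta$ with $\Phi_{k(i,j)}(\gamma')=\Phi_{k(i,j)}(\gamma)$, whence injectivity forces $\gamma=\gamma'\in\mathrm{Im}\,\delta$. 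By Lemma~\ref{im-delta} the Samelson product $\langle\bar{\epsilon}_i,\bar{\epsilon}_j\rangle$ is trivial.

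The one hypothesis that must be verified before the formal step is that of Proposition~\ref{[A,hofib]}, namely that $\dim X<2d_{k(i,j)}+4(p-1)$ and that $[X,\hofib(\rho_\ell^{k(i,j)})]$ is a free $\Z_{(p)}$-module, so that $\Phi_{k(i,j)}$ is genuinely injective; both hold here since $\dim X=4p-2<8p-6$ and the freeness follows from the skeletal induction already carried out in the proof of Proposition~\ref{[A,hofib]}. Consequently I expect no real obstacle inside the corollary itself: all of the substance is contained in the surjectivity statement of Lemma~\ref{Phi-delta-K}(1), which in turn rests on the explicit Chern-character computations of Proposition~\ref{Chern-1} together with the chosen generators of $\widetilde{K}(\Sigma A_i)$. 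Once that surjectivity is granted, the corollary follows immediately from the injectivity of $\Phi_{k(i,j)}$ and Lemma~\ref{im-delta}.
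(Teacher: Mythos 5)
Your proof is correct and takes essentially the same route as the paper: you isolate the same four cases $(\E_7,11,\{2,10\})$, $(\E_7,13,\{2,12\})$, $(\E_7,13,\{6,8\})$, $(\E_8,19,\{2,18\})$, combine the surjectivity of $\Phi_{k(i,j)}\circ\delta$ from Lemma \ref{Phi-delta-K}(1) with the injectivity of $\Phi_{k(i,j)}$ from Proposition \ref{[A,hofib]}, and conclude by Lemma \ref{im-delta}. The only difference is that you make explicit the injectivity step and the verification that $X=A_i\wedge A_j$ with $k(i,j)=2$, $d_{k(i,j)}=2p-1$, which the paper leaves implicit; both are accurate.
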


\begin{proof}
For $i,j\in t_p(G)$, $i+j=p+1,r_i+r_j=3$ if and only if $(G,p,\{i,j\})=(\E_7,11,\{2,10\}),$ $(\E_7,13,\{2,12\}),(\E_7,13,\{6,8\}),(\E_8,19,\{2,18\})$. In these cases, we see from Lemma \ref{Phi-delta-K} that $\Phi_{k(i,j)}(\gamma)$ must be in $\mathrm{Im}\,\Phi_{k(i,j)}\circ\delta$, where $\gamma$ is a lift of $\langle\bar{\epsilon}_i,\bar{\epsilon}_j\rangle$. Thus by Lemma \ref{im-delta}, we obtain that the Samelson product $\langle\bar{\epsilon}_i,\bar{\epsilon}_j\rangle$ is trivial.
\end{proof}

We choose a lift $\gamma$ explicitly and calculate $\Phi_{k(i,j)}(\gamma)$ by generalizing a calculation in \cite{HK}. We may set the map $B\rho_\ell\colon BG\to B\SU(\infty)$ to be a fibration such that its fiber is $B\hofib(\rho_\ell)$. Let $\hat{\gamma}$ be the composite
$$\Sigma A_i\times\Sigma A_j\xrightarrow{\bar{\epsilon}_i'\times\bar{\epsilon}_j'}BG\times BG\xrightarrow{B\rho_\ell\times B\rho_\ell}B\SU(\infty)\times B\SU(\infty)\to B\SU(\infty)$$
where the last arrow is the Whitney sum. Then we have 
\begin{equation}
\label{gamma(c)}
(\hat{\gamma}^*(c_n))=\sum_{s+t=n}(\bar{\epsilon}_i')^*\circ B\rho_\ell^*(c_s)\otimes(\bar{\epsilon}_j')^*\circ B\rho_\ell^*(c_t).
\end{equation}
Put $n=d_k+1,d_k+p$ where $k=k(i,j)$. 
By Proposition \ref{Chern-1}, there exists $d_0\geq 0$ such that $p^{d_0}x_i \in \mathrm{Im}\,B\rho_{\ell}^*$ for each generator $x_i$ of $H^*(BG)$. Since $B\rho_\ell^*(c_n)$ has no linear part for a degree reason, there are a quadratic polynomial $Q\in\widetilde{H}^*(B\SU(\infty))^2$ and $R\in\widetilde{H}^*(B\SU(\infty))^3$ such that $B\rho_\ell^*(p^dc_n-Q+R)=0$ for some $d$.
Note that
$$\hat{\gamma}^*(p^dc_n-Q+R)=\hat{\gamma}^*(p^dc_n-Q)\in\pi^*(H^*(\Sigma A_i\wedge\Sigma A_j))$$
where $\pi\colon\Sigma A_i\times\Sigma A_j\to\Sigma A_i\wedge\Sigma A_j$ is the pinch map which has the canonical right inverse in cohomology. 

By definition, there is a strictly commutative diagram
$$\xymatrix{\Sigma A_i\vee\Sigma A_j\ar[r]\ar[d]^{\bar{\epsilon}'_i\vee\bar{\epsilon}'_j}&\Sigma A_i\times\Sigma A_j\ar[d]^{\hat{\gamma}}\\
BG\ar[r]^{B\rho_\ell}&B\SU(\infty).}$$
Let $\omega\colon\Sigma A_i\wedge A_j\to\Sigma A_i\vee\Sigma A_j$ be the Whitehead product. Since $I_\omega\simeq\Sigma A_i\vee\Sigma A_j$ for the mapping cylinder $I_\omega$ of $\omega$, we can apply a homotopy lifting property of the fibration $B\rho_\ell$ to get a commutative diagram
$$\xymatrix{I_\omega\ar[r]\ar[d]&\Sigma A_i\times\Sigma A_j\ar[d]^{\hat{\gamma}}\\
BG\ar[r]^(.4){B\rho_\ell}&B\SU(\infty)}$$
where the left and the upper arrows are equivalent to those of the above diagram and the upper one factors the pinch map $I_\omega\to C_\omega$ to the mapping cone. Then since $B\hofib(\rho_\ell)$ is a fiber of $B\rho_\ell$, we get a strictly commutative diagram
$$\xymatrix{\Sigma A_i\wedge A_j\ar[r]^(.65){\rm incl}\ar[d]&I_\omega\ar[r]\ar[d]&\Sigma A_i\times\Sigma A_j\ar[d]^{\hat{\gamma}}\\
B\hofib(\rho_\ell)\ar[r]&BG\ar[r]^(.4){B\rho_\ell}&B\SU(\infty).}$$
By adjointness of Whitehead products and Samelson products, we see that the adjoint of the left arrow is a lift of $\langle\bar{\epsilon}_i,\bar{\epsilon}_j\rangle$, so we fix $\gamma$ to be this map.

The last commutative diagram induces a commutative diagram
$$\xymatrix{H^{2n-1}(\Sigma A_i\wedge A_j)\ar[r]^\partial&H^{2n}(I_\omega,\Sigma A_i\wedge A_j)&H^{2n}(\Sigma A_i\times\Sigma A_j)\ar[l]_\cong\\
H^{2n-1}(B\hofib(\rho_\ell))\ar[r]^\partial\ar[u]_{\bar{\gamma}^*}&H^{2n}(BG,B\hofib(\rho_\ell))\ar[u]&H^{2n}(B\SU(\infty))\ar[u]_{\hat{\gamma}^*}\ar[l]_(.4){B\rho_\ell^*}}$$
where the upper $\partial$ is identified with the composite
$$H^{2n-1}(\Sigma A_i\wedge A_j)\xrightarrow{\cong}H^{2n}(\Sigma^2 A_i\wedge A_j)\xrightarrow{\pi^*}H^{2n}(\Sigma A_i\times\Sigma A_j)$$
for the projection $\pi\colon\Sigma A_i\times\Sigma A_j\to\Sigma^2A_i\wedge A_j$. Since there is $e_n\in H^{2n-1}(B\hofib(\rho_\ell))$ which transgresses to $p^dc_n-Q+R$, we have
$$\bar{\gamma}^*(e_n)=\Sigma^{-1}\circ(\pi^*)^{-1}(\hat{\gamma}^*(p^dc_n-Q+R))=\Sigma^{-1}\circ(\pi^*)^{-1}(\hat{\gamma}^*(p^dc_n-Q)).$$
Consider the Serre spectral sequence of the path-loop fibration of $B\SU(\infty)$. We have that the restriction of $e_n$ transgresses to $p^dc_n$ by naturality. 
Since the transgression induces an isomorphism between the modules of indecomposables of  $H^{2n-1}(\SU(\infty))$ and $H^{2n}(B\SU(\infty))$, the restriction of $e_n$ coincides with $p^d\sigma(c_n)$ where $\sigma(c_n)$ denotes the suspension of $c_n$. Thus we see that, in the Serre spectral sequence of the path-loop fibration of $B\hofib(\rho_\ell)$, the suspension of $e_n$ is equal to the pullback of $p^da_{2n-2}$ through the projection $\hofib(\rho_\ell)\to\hofib(\rho_\ell^k)$. Since $H^{2n-2}(A_i\wedge A_j)$ is a free $\Z_{(p)}$-module, we have the following:
$$\gamma^*(a_{2n-2})=\Sigma^{-2}\circ(\pi^*)^{-1}(\hat{\gamma}^*(p^dc_n-Q))/p^d.$$
Thus by combining this equation with \eqref{gamma(c)}, we obtain an explicit description of $\gamma^*(a_{2n-2})$.

We here give an example calculation of $\gamma^*(a_{2n-2})$ for $(G,p,\{i,j\})=(\E_6,5,\{5,5\})$ and $n=10$. By Proposition \ref{Chern-1}, we have $Q=\frac{1}{16}c_5^2,d=0$, and then
$$(\pi^*)^{-1}(\hat{\gamma}^*(c_{10}-Q))=(\pi^*)^{-1}((1- \tfrac{1}{8})(\epsilon_5)^*\circ\rho_6^*(c_5)\otimes(\epsilon_5')^*\circ\rho_6^*(c_5))=126\Sigma^2 y_9\otimes y_9,$$
so we get $\gamma^*(a_{18})=126y_9\otimes y_9$. Quite similarly to this calculation, we obtain:

\begin{lemma}
\label{Phi(gamma)-1}
$\Phi_{k(i,j)}(\gamma)\mod p$ for $(G,p,(i,j))$ is given by:
\renewcommand{\arraystretch}{1.2}
\begin{table}[H]
\label{gamma}
\centering
\begin{tabular}{lllll}
$\E_6$\\
$p=5$&$(2,8)$&$(3y_3\otimes y_{15},2y_3\otimes y_{23}+3y_{11}\otimes y_{15})$&$(5,5)$&$(y_9\otimes y_9,*)$\\
$p=7$&$(2,6)$&$(2 y_3\otimes y_{23}+4y_{15}\otimes y_{11},5y_{15}\otimes y_{23})$&$(5,9)$&$(2y_9\otimes y_{17},0)$\\
$\E_7$\\
$p=11$&$(2,8)$&$(7y_3\otimes y_{35}+6y_{23}\otimes y_{15},9y_{23}\otimes y_{35})$&$(8,10)$&$(0,0)$\\
&$(8,14)$&$(-3y_{15}\otimes y_{27},*)$\\
$p=13$&$(2,6)$&$(3y_{3}\otimes y_{35},*)$&$(6,6)$&$(2y_{11}\otimes y_{35}+2y_{35}\otimes y_{11},-2y_{35}\otimes y_{35})$\\
&$(6,12)$&$(4y_{35}\otimes y_{23},0)$\\
$\E_8$\\
$p=11$&$(2,20)$&$(5y_3\otimes y_{39},-y_3\otimes y_{59}+y_{23}\otimes y_{39})$&$(8,14)$&$(7y_{15}\otimes y_{27},y_{15}\otimes y_{47}+2y_{35}\otimes y_{27})$\\
$p=13$&$(2,12)$&$(-2y_{3}\otimes y_{47}-y_{27}\otimes y_{23},*)$&$(8,18)$&$(-6y_{15}\otimes y_{35},-9y_{15}\otimes y_{59}-4y_{39}\otimes y_{35})$\\
$p=17$&$(14,20)$&$(-9y_{27}\otimes y_{39},*)$\\
$p=19$&$(2,12)$&$(10y_{3}\otimes y_{59}+4y_{39}\otimes y_{23},13y_{39}\otimes y_{59})$&$(14,24)$&$(-5y_{27}\otimes y_{47},*)$\\
\end{tabular}
\end{table}
\end{lemma}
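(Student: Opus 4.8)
The plan is to read off, for each triple $(G,p,\{i,j\})$ listed in the statement, both components of $\Phi_{k(i,j)}(\gamma)=(\gamma^*(a_{2d_k}),\gamma^*(a_{2d_k+2(p-1)}))$ (where $k=k(i,j)$) by applying the formula
$$\gamma^*(a_{2n-2})=\Sigma^{-2}\circ(\pi^*)^{-1}(\hat{\gamma}^*(p^dc_n-Q))/p^d$$
established just above the statement, once with $n=d_k+1$ for the first component and once with $n=d_k+p$ for the second. Thus the lemma reduces to a finite list of such evaluations, carried out exactly in the manner of the worked example for $(\E_6,5,\{5,5\})$ recorded above, where this recipe produced $\gamma^*(a_{18})=126\,y_9\otimes y_9\equiv y_9\otimes y_9\bmod 5$.

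For each case I would first assemble the needed Chern classes $c_s(\rho_\ell)=B\rho_\ell^*(c_s)$ from Proposition \ref{Chern-1}, supplemented by Proposition \ref{Chern-2} in precisely the degrees where the computation through $\Spin(10)$ and $\Spin(11)$ determines $c_s(\rho_\ell)$ only modulo $p$ (namely $c_{28}(\rho_7)$ at $p=11$, used for $(\E_7,11,\{8,10\})$, and $c_{36}(\rho_7)$ at $p=13$, used for $(\E_7,13,\{6,6\})$); since the lemma asserts its answers only modulo $p$, this partial information is enough. The quadratic polynomial $Q\in\widetilde{H}^*(B\SU(\infty))^2$ and the exponent $d$ are then obtained by writing the quadratic part of $c_n(\rho_\ell)$ as $B\rho_\ell^*(p^{-d}Q)$ after clearing denominators, just as $Q=\tfrac{1}{16}c_5^2$ and $d=0$ were forced by $c_{10}(\rho_6)=\tfrac{3}{4}x_4x_{16}+9x_{10}^2$ in the example.

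The evaluation of $\hat{\gamma}^*(p^dc_n-Q)$ is then carried out through \eqref{gamma(c)} together with the restriction rule $(\bar{\epsilon}_i')^*(x_{2m})=\Sigma y_{2m-1}$ for $m\equiv i\bmod(p-1)$ and $(\bar{\epsilon}_i')^*(x_{2m})=0$ otherwise, using that $\Sigma A_i$ is a suspension so that products of positive-dimensional classes, and in particular the cubic correction $R$, restrict to zero. Concretely, only those summands of \eqref{gamma(c)} with $s\equiv i$ and $t\equiv j\bmod(p-1)$, together with the quadratic monomials $x_{2m}x_{2m'}$ of $Q$ with $m\equiv i$, $m'\equiv j$, survive, each contributing a multiple of $\Sigma y_{2s-1}\otimes\Sigma y_{2t-1}$; applying $(\pi^*)^{-1}$, desuspending twice, dividing by $p^d$, and reducing modulo $p$ yields the entries of the table, while the entries marked $*$ correspond to components that are not needed for the later comparison with $\mathrm{Im}\,\delta$ and may be left uncomputed.

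I expect the work to be essentially bookkeeping, so that the main obstacle is arithmetic rather than conceptual: one must pin down $Q$ and $d$ correctly in each degree, tracking the $p$-adic valuations of the rational coefficients of Proposition \ref{Chern-1} so that the division by $p^d$ is legitimate and the mod-$p$ reduction is well defined. A secondary point requiring care is the symmetrization built into \eqref{gamma(c)}: when the two surviving linear parts come from distinct Chern classes $c_s,c_t$ with $s\ne t$, both the $(s,t)$- and the $(t,s)$-summands contribute, producing the symmetric pairs such as $2y_{11}\otimes y_{35}+2y_{35}\otimes y_{11}$ for $(\E_7,13,\{6,6\})$; one must combine their coefficients and signs correctly and, in the cases $i\ne j$, match the correct congruence class to the correct tensor factor.
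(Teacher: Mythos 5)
Your proposal is correct and coincides with the paper's own proof: the paper establishes the formula $\gamma^*(a_{2n-2})=\Sigma^{-2}\circ(\pi^*)^{-1}(\hat{\gamma}^*(p^dc_n-Q))/p^d$, works the single example $(\E_6,5,\{5,5\})$, and then obtains the whole table by exactly the case-by-case bookkeeping you describe, taking $n=d_{k(i,j)}+1$ and $n=d_{k(i,j)}+p$, reading off linear and quadratic parts from Proposition \ref{Chern-1}, and leaving the $*$-entries uncomputed. You also correctly identify the only two places where the mod $p$ data of Proposition \ref{Chern-2} must supplement Proposition \ref{Chern-1}, namely $c_{28}(\rho_7)$ at $p=11$ for $(\E_7,11,\{8,10\})$ and $c_{36}(\rho_7)$ at $p=13$ for $(\E_7,13,\{6,6\})$, and your points about the vanishing of the cubic correction $R$ on the smash product and the symmetrized $(s,t)$- and $(t,s)$-contributions are exactly the relevant subtleties.
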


\begin{proposition}
\label{Samelson-5}
\begin{enumerate}
\item For $(G,p,\{i,j\})=(\E_6,7,\{2,6\})$, $(\E_7,11,\{2,8\})$, $(\E_7,11,\{8,10\})$, $(\E_8, 19, \{2,12\})$, the Samelson product $\langle\bar{\epsilon}_i,\bar{\epsilon}_j\rangle$ is trivial.
\item For the other $(G,p,\{i,j\})$ in the table of Lemma \ref{Phi(gamma)-1}, $\langle\bar{\epsilon}_i,\bar{\epsilon}_j\rangle$ is non-trivial.
\end{enumerate}
%
%
\end{proposition}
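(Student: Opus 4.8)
The plan is to combine the triviality criterion of Lemma \ref{im-delta} with the injectivity of $\Phi_{k(i,j)}$ established in Proposition \ref{[A,hofib]}, and then to read off the answer in each case from the explicit mod $p$ computations of Lemmas \ref{Phi-delta-K} and \ref{Phi(gamma)-1}. First I would fix $(G,p,\{i,j\})$ in the table of Lemma \ref{Phi(gamma)-1}, set $k=k(i,j)$, take $X$ to be the skeleton of Proposition \ref{[A,hofib]}, and let $\gamma\in[X,\hofib(\rho_\ell^k)]$ be the lift of $\langle\bar{\epsilon}_i,\bar{\epsilon}_j\rangle$ constructed above. By Lemma \ref{im-delta} the Samelson product is trivial exactly when $\gamma\in\mathrm{Im}\,\delta$, and since $\Phi_k$ is an injective homomorphism, this is equivalent to $\Phi_k(\gamma)\in\mathrm{Im}(\Phi_k\circ\delta)$. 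Thus the whole problem reduces to deciding, inside the free $\Z_{(p)}$-module $H^{2d_k}(X)\oplus H^{2d_k+2(p-1)}(X)$, whether the element $\Phi_k(\gamma)$ of Lemma \ref{Phi(gamma)-1} lies in the submodule $\mathrm{Im}(\Phi_k\circ\delta)$ described in Lemma \ref{Phi-delta-K}.

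For part (1) the four pairs $(\E_6,7,\{2,6\})$, $(\E_7,11,\{2,8\})$, $(\E_7,11,\{8,10\})$, $(\E_8,19,\{2,12\})$ are precisely those appearing in Lemma \ref{Phi-delta-K}(2), so $\mathrm{Im}(\Phi_k\circ\delta)$ contains $p\cdot\bigl(H^{2d_k}(X)\oplus H^{2d_k+2(p-1)}(X)\bigr)$. Comparing the generator of $\mathrm{Im}(\Phi_k\circ\delta)\bmod p$ recorded in Lemma \ref{Phi-delta-K}(3) with the value $\Phi_k(\gamma)\bmod p$ of Lemma \ref{Phi(gamma)-1}, a direct check shows that in each of these cases $\Phi_k(\gamma)$ is a scalar multiple of that generator modulo $p$ (and is simply zero modulo $p$ in the case $\{8,10\}$). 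Writing $w\in\mathrm{Im}(\Phi_k\circ\delta)$ for a lift of the generator, we then have $\Phi_k(\gamma)-c\,w\in p\cdot(H\oplus H)\subseteq\mathrm{Im}(\Phi_k\circ\delta)$ for a suitable $c\in\Z_{(p)}$, whence $\Phi_k(\gamma)\in\mathrm{Im}(\Phi_k\circ\delta)$; therefore $\gamma\in\mathrm{Im}\,\delta$ and $\langle\bar{\epsilon}_i,\bar{\epsilon}_j\rangle$ is trivial.

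For part (2), for each remaining pair I would instead observe that the reduction $\Phi_k(\gamma)\bmod p$ of Lemma \ref{Phi(gamma)-1} is \emph{not} a scalar multiple of the generator of $\mathrm{Im}(\Phi_k\circ\delta)\bmod p$ recorded in Lemma \ref{Phi-delta-K}(3). In most cases this is already forced by the first coordinate, since $\Phi_k(\gamma)$ has a nonzero component in a summand where the image is zero modulo $p$; in the few cases with a genuinely two-term generator (for instance $(\E_7,13,\{6,6\})$ and $(\E_8,13,\{2,12\})$) the two coordinates determine incompatible values of the putative scalar, so no multiple matches. Since the mod $p$ reduction of $\mathrm{Im}(\Phi_k\circ\delta)$ is contained in the group of Lemma \ref{Phi-delta-K}(3), this gives $\Phi_k(\gamma)\notin\mathrm{Im}(\Phi_k\circ\delta)$, hence $\gamma\notin\mathrm{Im}\,\delta$ and the Samelson product is non-trivial.

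The delicate point is the logical direction in part (1): the tables record information only modulo $p$, so one cannot deduce the integral membership $\Phi_k(\gamma)\in\mathrm{Im}(\Phi_k\circ\delta)$ from the mod $p$ comparison by itself. This is exactly where Lemma \ref{Phi-delta-K}(2) is indispensable, as the containment of $p\cdot(H\oplus H)$ in the image absorbs the $p$-divisible ambiguity left undetermined by the mod $p$ computation. In part (2) no such upgrade is needed, because mod $p$ non-membership is already sufficient to rule out integral membership, so there the bookkeeping is purely a finite linear-algebra check against the two tables.
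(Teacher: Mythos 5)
Your proposal is correct and follows essentially the same route as the paper: the paper's proof likewise combines Lemma \ref{im-delta}, the injectivity of $\Phi_{k(i,j)}$ from Proposition \ref{[A,hofib]}, and a comparison of the tables in Lemmas \ref{Phi-delta-K} and \ref{Phi(gamma)-1}. Your explicit observation that in part (1) the mod $p$ scalar-multiple check must be upgraded to integral membership via the containment $p\cdot(H\oplus H)\subset\mathrm{Im}\,\Phi_{k(i,j)}\circ\delta$ of Lemma \ref{Phi-delta-K}(2) is exactly the point the paper's terse proof leaves implicit, and your case-by-case verifications (including the incompatible-scalar cases such as $(\E_7,13,\{6,6\})$) agree with the paper's computations.
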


\begin{proof}
By Lemma \ref{Phi-delta-K} and \ref{Phi(gamma)-1}, we see that $\Phi_{k(i,j)}(\gamma)\in\mathrm{Im}\,\Phi_{k(i,j)}\circ\delta$ in the case of (1) and that $\Phi_{k(i,j)}(\gamma)\not\in\mathrm{Im}\,\Phi_{k(i,j)}\circ\delta$ in the case of (2). Then by Proposition \ref{[A,hofib]}, the proof is completed.
\end{proof}

\begin{proposition}
\label{Samelson-6}
The Samelson product $\langle\bar{\epsilon}_{12},\bar{\epsilon}_{12}\rangle$ in $\E_8$ at $p=19$ is trivial.
\end{proposition}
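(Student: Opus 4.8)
The plan is to run the same strategy as in Proposition \ref{Samelson-5}, but through the special identification of $\hofib(\rho_8^{24})$ given in the second half of the proof of Proposition \ref{[A,hofib]}, since here $k(12,12)=24$ and $c_{24}(\rho_8)=19\lambda x_{48}+\cdots$ carries the factor $p=19$. First I would record the numerics: $r_{12}=2$, $r_{24}=1$, $A_{12}\simeq S^{23}\cup_{\alpha_1}e^{59}$, and $d_{24}=41$, so the detecting degrees are $2d_{24}=82$ and $2d_{24}+2(p-1)=118$; these are two of the four cell dimensions $46,82,82,118$ of $X=A_{12}\wedge A_{12}$ (whose top cell lies in dimension $118$, so $X$ is the whole smash). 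By Lemma \ref{im-delta}, $\langle\bar\epsilon_{12},\bar\epsilon_{12}\rangle$ is trivial if and only if the chosen lift $\gamma$ lies in $\mathrm{Im}\,\delta$; since $\Phi_{24}$ is injective by Proposition \ref{[A,hofib]}, this reduces to the single membership $\Phi_{24}(\gamma)\in\mathrm{Im}(\Phi_{24}\circ\delta)$.

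Next I would compute the target $\mathrm{Im}(\Phi_{24}\circ\delta)$. By Proposition \ref{Phi-delta}, carried through the comparison $\Phi_{24}=19\,\Phi'$ of the proof of Proposition \ref{[A,hofib]}, this image is spanned by $41!\,\mathrm{ch}_{41}$ and $59!\,\mathrm{ch}_{59}$ evaluated on the basis $\xi_{12}\otimes\xi_{12},\ \xi_{12}\otimes\eta_{12},\ \eta_{12}\otimes\xi_{12},\ \eta_{12}\otimes\eta_{12}$ of $\widetilde K(A_{12}\wedge A_{12})\cong\Sigma^{-2}\widetilde K(\Sigma A_{12})\otimes\widetilde K(\Sigma A_{12})$, whose Chern characters are read off from $\mathrm{ch}(\bar\rho_8)$ and $\mathrm{ch}(\eta_{12})=u_{60}$. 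The decisive inputs are the $p$-adic valuations $v_p(41!)=2$ and $v_p(59!)=3$, which, after stripping the unit coefficients coming from $\mathrm{ch}(\bar\rho_8)$, determine exactly which $\Z_{(p)}$-multiples of $y_{23}\otimes y_{59}$, $y_{59}\otimes y_{23}$ and $y_{59}\otimes y_{59}$ are hit.

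Then I would compute $\Phi_{24}(\gamma)$ using the explicit lift constructed above, i.e. $\gamma^*(a_{2n-2})=\Sigma^{-2}(\pi^*)^{-1}\bigl(\hat\gamma^*(p^dc_n-Q)\bigr)/p^d$ for $n=42,60$. Here $\hat\gamma^*(c_n)=\sum_{s+t=n}(\bar\epsilon_{12}')^*B\rho_8^*(c_s)\otimes(\bar\epsilon_{12}')^*B\rho_8^*(c_t)$ collapses drastically: among the relevant $c_s$, only $c_{12}$ and $c_{30}$ restrict nontrivially to $\Sigma A_{12}$ (with coefficients $-126$ and $-15265250$), since all other generators $x_{2m}$ have $m\not\equiv 12\bmod(p-1)$. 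Thus the whole computation is driven by these two restrictions together with the \emph{determined} quadratic coefficients of $c_{42}(\rho_8)$ and $c_{60}(\rho_8)$ in Proposition \ref{Chern-1}; the undetermined coefficient of $x_{36}x_{48}$ in $c_{42}(\rho_8)$ is harmless because $x_{36}$ and $x_{48}$ restrict to zero on $A_{12}$. The factor $p=19$ sitting in $c_{24}(\rho_8)$ is exactly what forces the non-generic shape of $\hofib(\rho_8^{24})$ and the relation $\Phi_{24}=19\,\Phi'$, and it is through this normalization of $a_{82},a_{118}$ that the extra power of $p$ enters $\Phi_{24}(\gamma)$.

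Finally I would compare the two computations and, by injectivity of $\Phi_{24}$, conclude $\Phi_{24}(\gamma)\in\mathrm{Im}(\Phi_{24}\circ\delta)\Rightarrow\gamma\in\mathrm{Im}\,\delta$, hence triviality. The main obstacle is the careful bookkeeping of the single factor $p=19$ arising from $c_{24}(\rho_8)=19\lambda x_{48}+\cdots$: it must be threaded consistently through the identification of $\hofib(\rho_8^{24})$, the normalization of the detecting classes $a_{82},a_{118}$ (equivalently the comparison $\Phi_{24}=19\,\Phi'$), and the valuations $v_p(41!)=2$ and $v_p(59!)=3$, so that the membership holds integrally and not merely modulo $p$. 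This is precisely the feature that distinguishes $(12,12)$ from the cases settled in Proposition \ref{Samelson-5}.
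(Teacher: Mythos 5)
Your overall strategy is exactly the paper's: reduce via Lemma \ref{im-delta} and the injectivity of $\Phi_{24}$ from Proposition \ref{[A,hofib]} to the single membership $\Phi_{24}(\gamma)\in\mathrm{Im}\,\Phi_{24}\circ\delta$, compute that image on the basis $\xi_{12}\otimes\xi_{12},\ \xi_{12}\otimes\eta_{12},\ \eta_{12}\otimes\xi_{12},\ \eta_{12}\otimes\eta_{12}$ via Proposition \ref{Phi-delta}, and compute $\Phi_{24}(\gamma)$ from the explicit lift; your numerics ($k(12,12)=24$, $d_{24}=41$, detecting degrees $82$ and $118$, $v_{19}(41!)=2$, $v_{19}(59!)=3$) and your observation that the undetermined $x_{36}x_{48}$-coefficient of $c_{42}(\rho_8)$ is harmless because $x_{36},x_{48}$ restrict trivially to $\Sigma A_{12}$ all agree with the paper. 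The genuine gap is that the decisive step is never performed: you assert that the final comparison will yield membership, but in this case nothing short of the actual numbers decides it, since both $\Phi_{24}(\gamma)$ and $\Phi_{24}\circ\delta(\xi_{12}\otimes\xi_{12})$ vanish modulo $19$, so the question is settled only by the coefficients of $19$ modulo $19^2$. The paper's entire proof is this computation: $\Phi_{24}\circ\delta(\xi_{12}\otimes\xi_{12})\equiv 57(y_{23}\otimes y_{59}+y_{59}\otimes y_{23})+209\,y_{59}\otimes y_{59}$ and $\Phi_{24}(\gamma)\equiv 228(y_{23}\otimes y_{59}+y_{59}\otimes y_{23})+114\,y_{59}\otimes y_{59}$ modulo $19^2$, whence $\Phi_{24}(\gamma)\equiv 4\cdot\Phi_{24}\circ\delta(\xi_{12}\otimes\xi_{12})$ modulo $19^2$, combined with the check (as in Lemma \ref{Phi-delta-K}) that $\mathrm{Im}\,\Phi_{24}\circ\delta$ contains $19^2y_{23}\otimes y_{59}$, $19^2y_{59}\otimes y_{23}$ and $19^2y_{59}\otimes y_{59}$. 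Had these mod $19^2$ coefficients come out differently, your identical setup would instead prove non-triviality, so the favorable outcome cannot be presumed.

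There is also a bookkeeping misattribution that could corrupt precisely this mod $19^2$ test: you propose to carry Proposition \ref{Phi-delta} ``through the comparison $\Phi_{24}=19\,\Phi'$'' and claim that the extra power of $p$ enters $\Phi_{24}(\gamma)$ through the normalization of $a_{82},a_{118}$. In the paper, the comparison with $\Phi'$ (via Toda's lemma) serves only to prove injectivity of $\Phi_{24}$ in Proposition \ref{[A,hofib]}; Proposition \ref{Phi-delta} is stated in the situation of Proposition \ref{[A,hofib]} and hence applies to $\Phi_{24}$ as defined, with no extra factor of $19$, and the lift is evaluated by the uniform formula $\gamma^*(a_{2n-2})=\Sigma^{-2}\circ(\pi^*)^{-1}\bigl(\hat{\gamma}^*(p^dc_n-Q)\bigr)/p^d$ for $n=42,60$. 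The divisibility of $\Phi_{24}(\gamma)$ by $19$ is an output of the Chern class arithmetic (the coefficients of $x_{24}x_{60}$ in $c_{42}(\rho_8)$ and of $x_{60}^2$ in $c_{60}(\rho_8)$, together with the correction term $Q$), not an input from a rescaling of the detecting classes. Inserting a spurious factor of $19$ on one side of the comparison but not the other shifts the computation by exactly one power of $p$, which in a test conducted at the level of $19\cdot(\text{unit})$ versus $19^2$ can flip the conclusion; this must be straightened out before the computation is run.
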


\begin{proof}
As above, we see that
\begin{align*}
\Phi_{24}\circ\delta(\xi_{12}\otimes\xi_{12})&\equiv 57(y_{23}\otimes y_{59}+y_{59}\otimes y_{23})+209y_{59}\otimes y_{59},\\
\Phi_{24}(\gamma)&\equiv 228(y_{23}\otimes y_{59}+y_{59}\otimes y_{23})+114y_{59}\otimes y_{59}
\end{align*}
modulo $19^2$. Similarly to Lemma \ref{Phi-delta-K}, we see that $\mathrm{Im}\,\Phi_{24}\circ\delta$ includes $19^2y_{23}\otimes y_{59},19^2y_{59}\otimes y_{23},19^2y_{59}\otimes y_{59}$, so $\Phi_{24}(\gamma)\in\mathrm{Im}\,\Phi_{24}\circ\delta$. Thus the proof is completed by Proposition \ref{[A,hofib]}.
\end{proof}

\begin{proof}
[Proof of Theorem \ref{main}]
Combine Corollary \ref{Samelson-1} and Proposition \ref{Samelson-2}, \ref{Samelson-3}, Corollary \ref{Samelson-4}, Proposition \ref{Samelson-5} and \ref{Samelson-6}.
\end{proof}

\end{document}